\documentclass[reqno,a4paper,11pt]{amsart}
\usepackage{amsmath,amsthm,amssymb,mathrsfs}
\usepackage[
left = 2.25cm,
right = 2.25cm,
top = 2.2 cm,
bottom = 2.2 cm
]{geometry}
\usepackage{graphicx,xcolor,tikz,mathtools}
\usepackage{bm,stmaryrd}
\usepackage{cite}
\usepackage{amsaddr}
\usepackage{enumitem}
\usepackage[toc,page]{appendix}
\usepackage[
pdfencoding=auto,
colorlinks = true,
citecolor = black,
linkcolor = black,
anchorcolor = black,
urlcolor = black, 
filecolor=black,
pdfkeywords={}
]{hyperref}
\usepackage{subcaption}
\usepackage{esint} %

\newcommand{\mich}[1]{\textcolor{black}{#1}}
\newcommand{\andre}[1]{\textcolor{black}{#1}}
\newcommand{\rev}[1]{\textcolor{black}{#1}}

\theoremstyle{plain}
\newtheorem{theorem}{Theorem}[section]

\newtheorem{lemma}[theorem]{Lemma}

\newtheorem{remark}[theorem]{Remark}
\theoremstyle{definition}

\numberwithin{equation}{section}

\newcommand{\bn}{\mathbf{n}}

\newcommand{\bI}{\mathbf{I}}

\newcommand{\bP}{\mathbf{P}}

\def\vphi{\varphi}

\def\op{\overline {\mathrm p}}

\def\Oe{\Omega_\varepsilon}
\renewcommand{\d}{\mathrm{d}}
\newcommand{\dx}{\,\d x}
\newcommand{\dt}{\,\d t}

\newcommand{\ddt}{\frac{\d}{\d t}}

\def\Andrea#1{{\color{violet}#1}}

\newcommand{\ptial}[1]{ \partial_{#1} }
\newcommand{\pt}{\ptial{t}}
\newcommand{\onehalf}{\frac{1}{2}}

\newcommand{\norm}[1]{\left\Vert #1 \right \Vert}
\newcommand{\normm}[1]{\Vert #1 \Vert}
\newcommand{\normmm}[1]{\left\vert #1 \right\vert}

\def\Div{\mathrm{div}\,}

\def\R{\mathbb R}
\def\eps{\varepsilon}
\def\mM{\mathcal M}
\allowdisplaybreaks[4]

\begin{document}
    \title[Thermal Transmission in a Film-Bulk Domain
]{Thermal Transmission in a Film-Bulk Domain: Finite-Thickness Film and Thin-Film Limit}	
	
	\author[F. Auricchio, M. Chiappetta, A. Poiatti, A. Reali]{
		\small 
		Ferdinando Auricchio$^{a,b}$, Mihaela Chiappetta$^{a,*}$,  \\Andrea Poiatti$^c$, Alessandro Reali$^{a,b}$
	}

	\address{
		$^a${Department of Civil Engineering and Architecture - University of Pavia, Via Ferrata 3, 27100, Pavia, Italy}
	}
	\address{
		$^b$Istituto di Matematica Applicata e Tecnologie Informatiche “E. Magenes” - Consiglio Nazionale delle Ricerche, Via Ferrata, 5/A, 27100, Pavia, Italy
	}
	\email{ferdinando.auricchio@unipv.it}
	\email{mihaela.chiappetta@unipv.it}

	\address{
		$^c$Dipartimento di Scienze Matematiche, Fisiche e Informatiche, \\ Università degli Studi di Parma, 43124 Parma, Italy
	}
	
	\email{andrea.poiatti@unipr.it}
	 \email{alessandro.reali@unipv.it}
	\thanks{$^*$Corresponding author: Mihaela Chiappetta.}
	
	\dedicatory{\ \\ The present work is dedicated to the memory of our dear friend and esteemed colleague Prof.~Alessandro Veneziani, recalling his passionate enthusiasm and pioneering intuition in combining mathematics, engineering, and physics,\\ as well as his constant dedication to collaboration and mentoring}
	
	\date{October 6, 2026}

	\maketitle
	
    \begin{abstract}
	We investigate a thermal transmission problem in a domain composed of a film region and a bulk region coupled through an interface where temperature continuity and heat-flux balance are imposed. The analysis addresses both the finite-thickness film configuration and the corresponding thin-film limit, with anisotropic, non-homogeneous thermal conductivity tensors.
	
	In the finite-thickness film configuration, the film occupies a volumetric domain and contributes directly to the heat transfer process through its coupling with the bulk region. Letting the film thickness vanish directly leads to a degenerate problem due to the vanishing measure of the film region.
	
	To preserve the contribution of the film in the limit, we introduce suitable scaling assumptions for the normal heat fluxes, namely for the flux balance across the film-bulk interface and for the flux prescribed on the external boundary of the film region, together with the requirement that the volumetric source in the film be uniformly bounded with respect to the film thickness. Under these scaling assumptions, we derive a non-trivial thin-film limit formulation that takes the form of a coupled bulk-interface problem, where the effect of the collapsing film is represented through an additional balance relation posed on the interface.
	
	Well-posedness results are established for both the finite-thickness film and thin-film limit configurations, and a rigorous convergence analysis is carried out to justify the asymptotic limit. In particular, we show that global-in-time solutions to the finite-thickness film problem converge to the thin-film limit solution as the film thickness vanishes. The main mathematical achievement consists in finding explicit convergence rates in suitable functional norms, relying only on the weak regularity of the finite-thickness problem due to the transmission conditions at the film interface.
	To complement the study, analytical solutions are derived for a simplified film-bulk geometry and are used to compare the thermal responses of the two configurations. These solutions also provide a quantitative assessment of the convergence rates established in the theoretical analysis.
\end{abstract}

	\section{Introduction}
	\label{sec:introduction}
	
	We investigate a thermal transmission problem posed on a domain composed of a film region and a bulk region coupled through an interface. 
	The analysis considers both a finite-thickness film configuration, in which the film occupies a volumetric layer of small but finite thickness, and the corresponding thin-film limit configuration, in which the film thickness tends to zero and the film collapses onto the interface separating the two regions. 
	\mich{In the finite-thickness film configuration, the thermal fields in the film and bulk regions are coupled through standard transmission conditions. As the film thickness decreases, the mathematical description progressively transitions from a volumetric formulation to an effective interface representation.}
	
	The reduction from the finite-thickness film configuration to the thin-film limit substantially modifies the mathematical structure of the transmission problem. 
	In the finite-thickness case, the thermal contribution of the film is described through standard diffusion and source terms defined over the film region. 
	Directly letting the film thickness tend to zero leads instead to a degenerate problem, since the measure of the film region vanishes together with its volumetric contribution. Consequently, the resulting formulation fails to retain the thermal effect associated with the film region and does not provide a meaningful description of the limiting physical process.
	
	Despite this mathematical degeneracy, numerous physical systems involve thin regions whose thermal contribution remains significant even when their thickness is small. 
	Laser-based additive manufacturing processes, such as powder bed fusion of metals, provide a representative example, since thermal energy is deposited within a narrow region associated with the laser-material interaction \cite{mohsan2023advancements,sahasrabudhe2018laser,sun2017powder,singh2021powder}. 
	Although the characteristic thickness of this region is small, the associated thermal contribution strongly influences the global thermal response and therefore requires an appropriate mathematical representation \cite{meier2017thermophysical,gouge2017thermo,mirkoohi2018thermal}. 
	Similar situations arise in a variety of diffusion and transport processes involving coatings, thin layers, membranes, or localized energy sources, where the influence of a geometrically small region remains non-negligible at the macroscopic scale. \mich{Thermal barrier coatings applied to turbine and combustor components provide a first example: a ceramic top coat only a few hundred micrometers thick governs the thermal state of the underlying metallic component, and its contribution cannot be neglected in the description of the global thermal response \cite{padture2002thermal}.} A second example is offered by thin films and interface layers in microelectronic devices, whose thickness is several orders of magnitude smaller than the characteristic size of the device, but whose thermal behavior controls heat dissipation and hence the operating temperature of the whole system \cite{cahill2014nanoscale,shen2025interface}.
	
				\mich{To preserve the contribution of the film in the thin-film limit configuration, we introduce suitable scaling assumptions for the normal heat fluxes, namely for the flux balance across the film-bulk interface and for the flux prescribed on the external boundary of the film region, together with the requirement that the volumetric source in the film be uniformly bounded with respect to the film thickness.}
	The proposed scaling assumptions compensate for the vanishing measure of the film region and yield a non-trivial thin-film limit problem. 
	Under these assumptions, the limiting configuration leads to a coupled bulk-interface formulation in which the film no longer appears as a volumetric region but contributes through an additional balance relation posed on the interface and coupled with the bulk equation. 
	The resulting formulation provides a mathematically consistent description of thermal transmission in both the finite-thickness film and thin-film limit configurations. \andre{Similar scaling assumptions have already appeared in the literature in the study of other asymptotic limits (cf., e.g., \cite{Andr,DiBenedetto}).}
	
	Thin-domain problems and dimensional reduction techniques have been extensively investigated in the literature. 
	Parabolic problems posed on thin domains have been studied in \cite{Miura1,Miura3,Miura2}, where asymptotic limits and lower-dimensional effective models are derived as the domain thickness tends to zero.
	\andre{Equations on manifolds and evolving interfaces have also attracted the attention of the mathematical community, and we refer, for instance, to the studies on surface Navier-Stokes equations \cite{Poiat-Stefanelli,PS2016,PSW2021,JOR18,ORZ2022,BDGP2023}, on Cahn-Hilliard equations on evolving surfaces \cite{DE,CEGP,CEGP2}, as well as on surface Navier-Stokes-Cahn-Hilliard systems \cite{ES,AGGP1,AGPP2}. In particular, in \cite{ES} a formal thin-film procedure is adopted to derive the surface equations. Moreover, the thin-film limit problem can be interpreted as a bulk problem with dynamic boundary conditions, which have been extensively studied in the literature, especially in connection with phase-field models (cf., e.g., \cite{GK,GMS,KL,KL2,GGP}). 
	On the other hand, the finite-thickness film problem belongs to the class of so-called transmission problems. Fundamental analytical aspects, including well-posedness, regularity, and the treatment of the discontinuities arising across the interface, where suitable transmission conditions must be imposed, have been widely investigated, for instance, in \cite{Caffarelli,Gilbarg,Grisvard}; see also the recent survey \cite{urbano}.}
	
	\andre{The present work builds upon these developments and focuses on a thermal transmission problem in which volumetric and interfacial contributions coexist and interact through the thin-film limit process. 
	We formulate and analyze the thermal transmission problem in both the finite-thickness film and thin-film limit configurations. Under meaningful scaling assumptions, we first show the existence of global-in-time solutions for the finite-thickness film and thin-film problems. Then, we crucially extend the techniques for thin-film limits proposed in the works by Miura \cite{Miura1,Miura3,Miura2} to the case of a transmission problem including bulk equations and anisotropic, non-homogeneous conductivity tensors. The main tool we develop is an essential inequality connecting the corresponding bulk and (suitably averaged) film variables at the film-bulk interface (cf. Lemma~\ref{tracelemma}), together with the extension of the highly technical arguments proposed by Miura, so as to also account for the bulk and the related transmission conditions, as well as for the anisotropy of the conductivity tensors, for which some structural assumptions are in order. In this way we are able to prove that the solutions to the finite-thickness film problem converge to the solution to the thin-film problem, only relying on the limited regularity allowed for transmission problems.}
	
	\andre{We additionally show, rigorously, a convergence rate in suitable norms, which appears to be new for this type of film-bulk problem.}
    \mich{	The rate is of first order in the film thickness, up to the discrepancy of the initial datum, whenever the tangential and normal parts of the film conductivity decouple at the interface (cf. Remark~\ref{rem:structural}); without this structural assumption a quantitative estimate still holds, with a rate of the same order plus an additional term measuring the coupling in suitable norms, which must vanish as $\eps\to0$ to allow for convergence (cf. Remark~\ref{rem:noncommuting}).}
	
	\mich{In order to focus on the asymptotic behavior associated with the film region, the analysis is carried out under the assumption that the external boundary of the bulk region carries no prescribed conditions, and that the two regions share the same volumetric heat capacity; the general case is left to future investigation. }   

	\mich{We also investigate a simplified film-bulk problem under suitable geometric and modeling assumptions, derive analytical solutions in both configurations, and compare the corresponding thermal responses. Finally, these results are compared with the theoretical findings, showing that the convergence of the finite-thickness film solutions toward the thin-film solution is consistent with, and in the present simplified setting faster than, the rate established in the asymptotic analysis.}
	
	The paper is organized as follows.
	Section~\ref{sec:problem} introduces the thermal transmission problem and presents its formulation in both the finite-thickness film and thin-film limit configurations. Particular attention is devoted to the identification of the degeneracy arising in the direct thin-film limit and to the introduction of the scaled formulation that preserves the contribution of the film as its thickness tends to zero.
	Section~\ref{sec:thinfilm_analysis} develops the mathematical framework required for the asymptotic analysis. After introducing the geometric setting of the film-bulk domain and the associated family of thin regions, we present the rescaled transmission problem together with the mathematical tools needed for the analysis. The section then establishes the main theoretical results, including well-posedness of the finite-thickness and limit problems, rigorous convergence of the scaled formulation, and quantitative error estimates characterizing the asymptotic behavior of the solutions.
	Section~\ref{sec:simplified} complements the theoretical developments by considering a simplified film-bulk geometry for which analytical solutions can be derived explicitly. The resulting solutions provide additional insight into the thermal behavior of the system and allow a direct comparison between the finite-thickness film configuration and the corresponding thin-film limit. We also compare these results with the theoretical findings of Section~\ref{sec:thinfilm_analysis}. Finally, Section~\ref{sec:conclusions} summarizes the main results of the work and discusses possible directions for future developments.
	
	\section{Thermal Transmission Problem in a Film-Bulk Domain}
	\label{sec:problem}
	We begin by introducing the geometric setting and governing equations of the thermal transmission problem considered throughout the paper.
	
	Let $\Omega \subset \mathbb{R}^N$ denote a domain composed of two subregions, namely a film region, $\Omega_f$, and a bulk region, $\Omega_b$, separated by a sufficiently smooth interface denoted by $\Sigma$. A schematic representation of the domain $\Omega$ and its decomposition into the film and bulk regions is shown in Figure~\ref{fig:original_domain}.
	The geometric decomposition of the domain is therefore described by
	\[
	\Omega = \Omega_f \cup \Sigma \cup \Omega_b,
	\qquad
	\Omega_f \cap \Omega_b = \emptyset,
	\qquad
	\Sigma = \partial \Omega_f \cap \partial \Omega_b \cap \Omega.
	\]
	Here $\Omega_f$ and $\Omega_b$ are open sets, and the interface $\Sigma$ is the portion of their common boundary lying inside $\Omega$, so that the edge of the interface, which lies on the external boundary $\partial\Omega$, is excluded from $\Sigma$ and the above decomposition is consistent with $\Omega$ being an open set.
	
	\mich{The boundaries of the two regions are decomposed as
	\[
	\partial \Omega_f = \Sigma \cup \Sigma_{\mathrm p},
	\qquad
	\partial \Omega_b = \Sigma \cup \Sigma_v \cup \Sigma_{\mathrm q},
	\]
		where $\Sigma_{\mathrm p} = \partial \Omega_f \setminus \Sigma$ denotes the external boundary of the film region, on which a prescribed heat flux may be applied, while $\Sigma_v$ and $\Sigma_{\mathrm q}$ denote the Dirichlet and Neumann portions of $\partial \Omega_b$, respectively, the three portions $\Sigma$, $\Sigma_v$ and $\Sigma_{\mathrm q}$ being pairwise disjoint.}
	
	The unit normal vectors associated with these boundary portions are the following: $\mathbf n_\Sigma$ denotes the unit normal on $\Sigma$, oriented from the bulk region $\Omega_b$ toward the film region $\Omega_f$; $\mathbf n_{\mathrm p}$ denotes the outward unit normal to $\Omega_f$ on $\Sigma_{\mathrm p}$; and $\mathbf n_{\mathrm q}$ denotes the outward unit normal to $\Omega_b$ on $\Sigma_{\mathrm q}$.
	
	The temperature fields in the film and bulk regions are denoted by $u=u(\mathbf x,t)$ and $v=v(\mathbf x,t)$, respectively, while $g=g(\mathbf x,t)$ and $h=h(\mathbf x,t)$ represent volumetric heat sources in $\Omega_f$ and $\Omega_b$.
	
	The thermal behavior in the two regions is characterized by volumetric heat capacities $a=a(\mathbf x,t)$ and $c=c(\mathbf x,t)$, and by thermal conductivity tensors $\mathbf B=\mathbf B(\mathbf x,t)$ and $\mathbf D=\mathbf D(\mathbf x,t)$, defined in $\Omega_f$ and $\Omega_b$, respectively. 
	
	\begin{figure}[htbp]
		\centering
		\begin{subfigure}[b]{0.52\textwidth}\centering
			\includegraphics[scale=0.80]{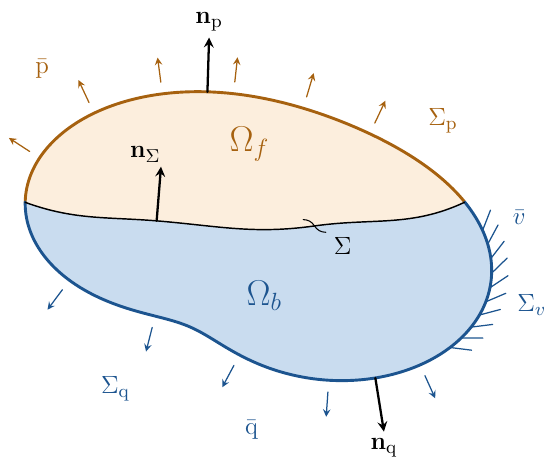}
			\caption{}
			\label{fig:domain_bc}
		\end{subfigure}\hfill
		\begin{subfigure}[b]{0.45\textwidth}\centering
			\includegraphics[scale=0.80]{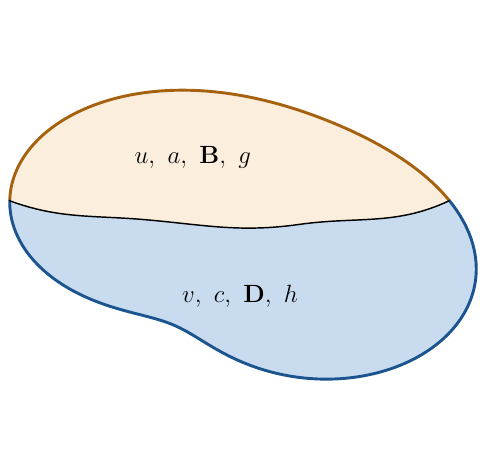}
			\caption{}
			\label{fig:domain_fields}
		\end{subfigure}
		\caption{\mich{Schematic representation of the film-bulk domain. The film region $\Omega_f$ and the bulk region $\Omega_b$ are separated by the interface $\Sigma$. Here $\Sigma_{\mathrm p}$ is the external boundary of the film region, on which the heat flux $\op$ is prescribed, whereas $\Sigma_v$ and $\Sigma_{\mathrm q}$ are the Dirichlet and Neumann portions of the bulk boundary, carrying the prescribed temperature $\bar v$ and the prescribed heat flux $\bar{\mathrm q}$; $\bn_\Sigma$, $\bn_{\mathrm p}$, $\bn_{\mathrm q}$ denote the corresponding outward unit normals. (A) Boundary portions, prescribed data and outward unit normals. (B) Fields and material properties of each region.}}
		\label{fig:original_domain}
	\end{figure}
	
	Heat conduction in the film and bulk regions is governed by parabolic balance equations \cite{PS2016}. In particular, the temperature fields $u$ and $v$ satisfy
	\[
	\begin{aligned}
		&a\,\partial_t u - \Div\!\big(\mathbf  B \nabla u\big) = g
		&& \text{in } \Omega_f,\\
		& c\,\partial_t v - \Div\!\big( \mathbf D \nabla v\big) = h
		&& \text{in } \Omega_b.
	\end{aligned}
	\]
	
	Thermal interaction between the two regions is described by transmission conditions on the interface $\Sigma$,
	\[
	\begin{aligned}
		u &= v 
		&& \text{on } \Sigma,\\
		\mathbf B \nabla u \cdot \mathbf n_\Sigma 
		&= \mathbf D \nabla v \cdot \mathbf n_\Sigma 
		&& \text{on } \Sigma,
	\end{aligned}
	\]
	which enforce temperature continuity and balance of normal heat fluxes.
	
	Finally, the external boundary conditions for the film and bulk regions are given by
	\[
	\mathbf B \nabla u \cdot \mathbf n_{\mathrm p} = \op
	\quad \text{on } \Sigma_{\mathrm p},
	\qquad
	v = \bar v
	\quad \text{on } \Sigma_v,
	\qquad
	\mathbf D \nabla v \cdot \mathbf n_{\mathrm q} = \bar{\mathrm q}
	\quad \text{on } \Sigma_{\mathrm q}.
	\]
	
	Collecting the parabolic balance equations, the transmission conditions, and the boundary conditions, the global thermal transmission problem in the finite-thickness film configuration reads
	\begin{align}
		\label{eps01}
		\begin{cases}
			a\,\partial_t u - \Div\!\big( \mathbf  B\nabla u\big) = g
			& \text{in } \Omega_f,\\[4pt]
			c\,\partial_t v - \Div\!\big( \mathbf D\nabla v\big) = h
			& \text{in } \Omega_b,\\[4pt]
			u = v
			& \text{on } \Sigma,\\[4pt]
			\mathbf  B\nabla u \cdot \mathbf n_\Sigma
			= \mathbf  D\nabla v \cdot \mathbf n_\Sigma
			& \text{on } \Sigma,\\[4pt]
			\mathbf B\nabla u \cdot \mathbf n_{\mathrm p} = \op
			& \text{on } \Sigma_{\mathrm p},\\[4pt]
			v = \bar v
			& \text{on } \Sigma_v,\\[4pt]
			\mathbf D\nabla v \cdot \mathbf n_{\mathrm q} = \bar{\mathrm q}
			& \text{on } \Sigma_{\mathrm q},
		\end{cases}
	\end{align}
	with initial conditions
	\[
	u(0) = u_0 \quad \text{in } \Omega_f,
	\qquad
	v(0) = v_0 \quad \text{in } \Omega_b,
	\qquad
	u_0 = v_0 \quad \text{on } \Sigma.
	\]
	
	We also consider the thin-film limit configuration, in which the film region collapses onto the interface $\Sigma$ \cite{Miura1,Miura3,Miura2}. The film is modeled as a family of thin domains $\{\Omega_\varepsilon\}_{\varepsilon>0}$ with characteristic thickness proportional to a small parameter $\varepsilon$, so that $|\Omega_\varepsilon| = \mathcal{O}(\varepsilon)$ (see Section~\ref{sec:thinfilm_analysis}). As $\varepsilon \to 0$, the volumetric contributions within $\Omega_\varepsilon$ vanish, and a direct passage to the limit in problem \eqref{eps01}, performed without additional scaling assumptions, leads to the formal thin-film limit configuration
	\begin{align}
		\label{thinfilm01}
		\begin{cases}
			c\,\partial_t v - \Div\!\big(\mathbf  D\nabla v\big) = h
			& \text{in } \Omega_b,\\[3pt]
			v = \bar v
			& \text{on } \Sigma_v,\\[3pt]
			\mathbf D\nabla v \cdot \mathbf n_{\mathrm q} = \bar{\mathrm q}
			& \text{on } \Sigma_{\mathrm q}.
		\end{cases}
	\end{align}
	
	\mich{Problem~\eqref{thinfilm01} reveals the degeneracy of the direct thin-film limit: all thermal contributions associated with the film region disappear as $\varepsilon \to 0$, so that the film reduces to a passive interface and the resulting formulation provides no information on the limiting physical process. Additional modeling assumptions are therefore required in order to obtain a meaningful thin-film limit that preserves the influence of the film on the overall thermal evolution.}
	
	To preserve the contribution of the film in the thin-film limit configuration, we introduce suitable asymptotic scaling assumptions, whose precise role is discussed in Section~\ref{sec:thinfilm_analysis}. \mich{Accordingly, problem \eqref{eps01} is replaced by}
	\begin{align}
		\label{eps1}
		\begin{cases}
			a\,\partial_t u - \Div\!\big( \mathbf B\nabla u\big) =  g
			& \text{in } \Omega_\varepsilon,\\[4pt]
			c\,\partial_t v - \Div\!\big( \mathbf D\nabla v\big) = h
			& \text{in } \Omega_b,\\[4pt]
			u = v
			& \text{on } \Sigma,\\[4pt]
			\mathbf  B\nabla u \cdot \mathbf n_\Sigma
			= \varepsilon\, \mathbf D\nabla v \cdot \mathbf n_\Sigma
			& \text{on } \Sigma,\\[4pt]
			\mathbf  B\nabla u \cdot \mathbf n_{\mathrm p} = \varepsilon\,\op
			& \text{on } \Sigma_{\mathrm p},\\[4pt]
			v = \bar v
			& \text{on } \Sigma_v,\\[4pt]
			\mathbf D\nabla v \cdot \mathbf n_{\mathrm q} = \bar{\mathrm q}
			& \text{on } \Sigma_{\mathrm q},
		\end{cases}
	\end{align}
	 with the same initial conditions as in problem \eqref{eps01}.
	
	As proven in Section~\ref{sec:thinfilm_analysis}, in the geometric setting introduced there, as $\varepsilon \to 0$ the thermal transmission problem \eqref{eps1} converges to the non-trivial thin-film limit configuration
	\begin{align}
		\label{thinfilm11}
		\begin{cases}
			c\,\partial_t v - \Div\!\big( \mathbf D\nabla v\big) = h
			& \text{in } \Omega_b,\\[3pt]
			v = \bar v
			& \text{on } \Sigma_v,\\[3pt]
			\mathbf D\nabla v \cdot \mathbf n_{\mathrm q} = \bar{\mathrm q}
			& \text{on } \Sigma_{\mathrm q},\\[3pt]
			u = v
			& \text{on } \Sigma,\\[3pt]
			a\,\partial_t u
			- \Div_\Sigma\!\big(\bP\mathbf  B_{|\Sigma}\bP\nabla_\Sigma u\big)
			= -\,\mathbf  D\nabla v \cdot \mathbf n_\Sigma
			+ g + \op
			& \text{on } \Sigma,
		\end{cases}
	\end{align}
	with
	\[
	u_0 = v_0 \quad \text{on } \Sigma,
	\qquad
	u(0) = u_0 \quad \text{on } \Sigma,
	\qquad
	v(0) = v_0 \quad \text{in } \Omega_b.
	\]

    	\mich{Here $\bP$ denotes the orthogonal projection onto the tangent space to $\Sigma$, $\nabla_\Sigma$ and $\Div_\Sigma$ the associated tangential gradient and divergence, and $\mathbf B_{|\Sigma}$ the restriction of $\mathbf B$ to the interface; all of them are introduced in Section~\ref{mat_sec1}.}
	Problem \eqref{thinfilm11} defines a coupled bulk-interface system in which the collapsing film region is represented through an effective interfacial balance equation. \mich{Only the tangential component of the conductivity tensor $\mathbf B$ enters the interfacial equation, as discussed in Section~\ref{sec:thinfilm_analysis}.}
	
	\mich{It is worth observing that this formulation admits two equivalent interpretations. On the one hand, problem \eqref{thinfilm11} can be regarded as a pair of coupled parabolic problems of different dimensions, the first posed in the $N$-dimensional bulk region $\Omega_b$ and the second posed on the $(N-1)$-dimensional interface $\Sigma$. The two problems interact through the continuity of the temperature across $\Sigma$ and through the normal heat flux $\mathbf D \nabla v \cdot \mathbf n_\Sigma$, which acts as a source term in the interfacial balance equation. On the other hand, since the interfacial temperature coincides with the trace of the bulk temperature on $\Sigma$, the unknown $u$ can be eliminated and problem \eqref{thinfilm11} can be equivalently rewritten as a single parabolic problem posed in $\Omega_b$, supplemented with the non-standard boundary condition
	\[
	a\,\partial_t v
	- \Div_\Sigma\!\big(\bP\mathbf B_{|\Sigma}\bP \nabla_\Sigma v\big)
	+ \mathbf D \nabla v \cdot \mathbf n_\Sigma
	= g + \op
	\qquad \text{on } \Sigma,
	\]
	where $v$ on $\Sigma$ is understood as the trace of the bulk temperature. This condition involves both the time derivative and the tangential diffusion of the trace of $v$, and therefore belongs to the class of dynamic boundary conditions (cf. the Introduction for some references).}
	\section{Mathematical Analysis of the Film-Bulk Thermal Transmission Problem}
	\label{sec:thinfilm_analysis}
	We develop the mathematical analysis of the thermal transmission problem introduced in Section~\ref{sec:problem}. The focus lies on the asymptotic behavior of the finite-thickness film configuration as the film thickness tends to zero and on the derivation of the corresponding thin-film limit configuration.
	
		\mich{We first introduce, in Section~\ref{mat_sec1}, the geometric setting of the film-bulk domain and the family of thin domains used to describe the collapsing film. Compared to Figure~\ref{fig:original_domain}, the analysis is carried out in a shell configuration, in which $\Omega_b$ is the internal region and is surrounded by the thin-film region $\Omega_\varepsilon$ (cf. Figure~\ref{bilayer}).}
	\mich{Section~\ref{mat_sec2} collects the assumptions on the coefficients of the problem and introduces the averaging operator along the film direction, on which the whole asymptotic analysis rests.}
	\mich{Section~\ref{mat_sec3} develops the preliminary results and the mathematical tools required for the analysis, in particular the estimates relating volumetric quantities defined on $\Omega_\varepsilon$ to their surface counterparts on $\Sigma$.}
	
		\mich{Section~\ref{mat_sec4} then establishes the main theoretical results: it first states the rescaled film-bulk problem and its thin-film limit explicitly, and then addresses the well-posedness of the film-bulk thermal transmission problem in both configurations, the rigorous derivation of the thin-film limit, and the convergence analysis of the rescaled problem. The latter includes explicit rates, in suitable norms, at which the solutions corresponding to $\varepsilon>0$ approach the thin-film solution.}

	\subsection{Geometric Setting of the Film-Bulk Domain}
	\label{mat_sec1}
	The thermal transmission problem introduced in Section~\ref{sec:problem} is now analyzed within a geometric setting tailored to the asymptotic analysis from the finite-thickness film configuration to the corresponding thin-film limit.
	The general \mich{film-bulk} domain previously introduced is therefore specialized to a family of uniformly thin tubular neighborhoods surrounding a closed interface without boundary.
	This geometric setting provides global normal coordinates and a natural description of the collapsing film region as the film thickness tends to zero.
	
	\mich{Let $\Sigma\subset\mathbb R^N$, with $N>1$, be a compact surface of class $C^3$ without boundary, and let $\bn_\Sigma$ denote its outward unit normal vector field. Consistently with Section~\ref{sec:problem}, $\bn_\Sigma$ is therefore oriented from the bulk region toward the film region.
	Within the asymptotic setting considered in the present section, the film region is represented by a family of curved thin domains $\Omega_\varepsilon\subset\mathbb R^N$, parameterized by the film thickness $\varepsilon>0$ and defined by}
	\begin{align}
		\Omega_\varepsilon = \{ \mathbf x\in\mathbb R^N:\ \mathbf x=y+r\bn_\Sigma(y),\quad y\in\Sigma,\quad r\in(0,\varepsilon) \}.
		\label{thindomain}
	\end{align}
	\mich{Note that, as in \cite{Miura3,Miura2}, the analysis can be generalized to the case of a nonuniform film thickness, provided that the thickness, as a function of $y\in\Sigma$, is bounded from below by $C_1\varepsilon$ and from above by $C_2\varepsilon$ for some constants $0<C_1<C_2$ independent of $\varepsilon$.
	The bulk region $\Omega_b\subset\mathbb R^N$ is the open set enclosed by $\Sigma$, so that $\partial\Omega_b=\Sigma$; in particular, in this configuration the bulk region has no external boundary.
	The complete domain, which depends on $\varepsilon$ and is denoted by $\Omega$ for brevity, is therefore given by
	\[
	\Omega = \Omega_b\cup\Sigma\cup\Omega_\varepsilon .
	\]}%
	\mich{The external boundary of the film region, denoted by $\Sigma_{\mathrm p}$ in Section~\ref{sec:problem}, is here the parallel surface at distance $\varepsilon$ from $\Sigma$, namely
	\begin{align}
		\Sigma_{\mathrm p} = \{ \mathbf x\in\mathbb R^N:\ \mathbf x = y+\varepsilon\bn_\Sigma(y),\quad y\in\Sigma \},
		\label{sigmap}
	\end{align}
	and $\bn_{\mathrm p}=\bn_\Sigma\circ\pi$ is its outward unit normal.
	A schematic representation of the film-bulk domain is shown in Figure~\ref{bilayer}.}
	
	\begin{figure}[htbp]
		\centering
		\includegraphics[width=0.5\textwidth]{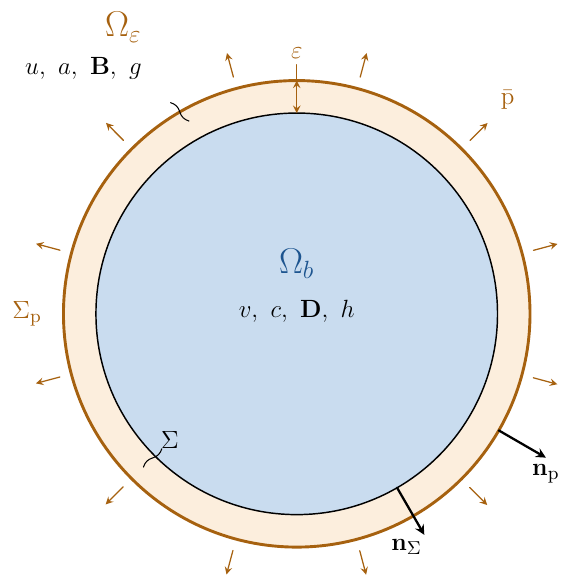}
		\caption{\mich{Schematic representation of the film-bulk domain specialized to a family of uniformly thin tubular neighborhoods surrounding a closed interface without boundary. The bulk region $\Omega_b$ is enclosed by the interface $\Sigma$ and is surrounded by the film region $\Omega_\varepsilon$ of thickness $\varepsilon$, whose external boundary is denoted by $\Sigma_{\mathrm p}$. Each region carries its temperature field, volumetric heat capacity, thermal conductivity tensor and volumetric heat source, and $\bn_\Sigma$, $\bn_{\mathrm p}$ denote the corresponding outward unit normals.}}
		\label{bilayer}
	\end{figure}
	
	\mich{Let now $d=d(\mathbf x)$ denote the signed distance function from $\Sigma$, increasing along the direction of $\bn_\Sigma$, and let $k_1,\dots,k_{N-1}$ denote the principal curvatures of $\Sigma$, taken with the sign convention fixed in \eqref{weingarten} below.
	Since $\Sigma$ is compact and of class $C^3$, the normal vector field $\bn_\Sigma$ is of class $C^2$ and the principal curvatures belong to $C_b^1(\Sigma)$; in particular, they are bounded.}
	
	Consequently, there exists a tubular neighborhood
	\(
	\mathcal N_\delta = \{ \mathbf x\in\mathbb R^N:\ |d(\mathbf x)|<\delta \}
	\)
	of $\Sigma$, of radius $\delta>0$, such that every point $\mathbf x\in\mathcal N_\delta$ admits a unique closest-point projection $\pi(\mathbf x)\in\Sigma$, satisfying
	\[
	\mathbf x = \pi(\mathbf x)+d(\mathbf x)\bn_\Sigma(\pi(\mathbf x)), \qquad \nabla d(\mathbf x) = \bn_\Sigma(\pi(\mathbf x)).
	\]
	
	The signed-distance function $d$ and the projection map $\pi$ belong to
	$C^2(\mathcal N_\delta)$ and $C^1(\mathcal N_\delta)$, respectively (see, e.g., \cite[Section 14.6]{Gilbarg}).
	\mich{Moreover, choosing $\delta>0$ sufficiently small, there exists a constant $C_\Sigma\geq1$ such that
	\begin{align}
		\frac1{C_\Sigma} \leq 1-rk_i(y) \leq C_\Sigma, \qquad \forall y\in\Sigma,\quad r\in[-\delta,\delta],\quad i=1,\dots,N-1 .
		\label{controlcurv}
	\end{align}}
        
	The orthogonal projection onto the tangent plane of $\Sigma$ is denoted by
	\(
	\bP = \bI-\bn_\Sigma\otimes\bn_\Sigma .
	\)
	
	For $\eta\in C_b^1(\Sigma)$, the tangential gradient is defined as
	\(
	\nabla_\Sigma\eta = \bP(\nabla\widetilde\eta)_{|\Sigma},
	\)
	where $\widetilde\eta$ denotes an extension of $\eta$ to $\mathcal N_\delta$.
	In particular, the constant extension of $\eta$ along the normal direction is given by
	\(
	\widetilde\eta = \eta\circ\pi,
	\)
	which satisfies
	\[
	\nabla\widetilde\eta(\mathbf x) = \nabla_\Sigma\eta(\mathbf x), \qquad \mathbf x\in\Sigma .
	\]

    	\mich{Correspondingly, for a vector field $\mathbf F\in C^1_b(\Sigma;\R^N)$ the tangential divergence is defined as
	\(
	\Div_\Sigma\mathbf F = \bP:\nabla\widetilde{\mathbf F}_{|\Sigma} ,
	\)
	where $\widetilde{\mathbf F}$ denotes an extension of $\mathbf F$ to $\mathcal N_\delta$. Both definitions are independent of the chosen extension, since only tangential derivatives are involved.}
	
	\mich{The Weingarten map of $\Sigma$ is then given by
	\begin{align}
		W := -\nabla_\Sigma\bn_\Sigma ,
		\label{weingarten}
	\end{align}
	its eigenvalues being the principal curvatures $k_1,\dots,k_{N-1}$ introduced above. This fixes the sign convention adopted throughout, and is the one under which the Jacobian of the change of variables in the film region takes the form \eqref{J}. Accordingly, we set
	\(
	H_\Sigma := -\Div_\Sigma\bn_\Sigma = \operatorname{tr}W =k_1+\dots+k_{N-1}
	\)
	for the mean curvature, so that
	\(
	\Div_\Sigma(\varphi\,\bn_\Sigma) = -\varphi\,H_\Sigma
	\)
	for every scalar function $\varphi$ on $\Sigma$: a vector field with a non-vanishing normal component contributes as usual to its own tangential divergence through a curvature term.}

	Finally, for a function
	\(
	\varphi:\mathcal N_\delta\to\mathbb R^m,
	\)
	with $m\geq1$, the corresponding representation in film coordinates is defined by
	\begin{align}
		\varphi^\sharp(y,r) = \varphi(y+r\bn_\Sigma(y)), \qquad y\in\Sigma,\quad r\in[-\delta,\delta].
		\label{sharp}
	\end{align}
	Throughout the remainder of the section we assume
	\(
	\delta\in(0,1),  \varepsilon\in(0,\delta),
	\)
	so that $\Omega_\varepsilon\subset\mathcal N_\delta$ and \eqref{sharp} is well defined for every $r\in[0,\varepsilon]$.

	\subsection{\mich{Assumptions on the Coefficients and the Averaging Operator}}
	\label{mat_sec2}
		
		\mich{We now introduce the mathematical assumptions underlying the rescaled thermal transmission problem associated with the film-bulk domain described in Section~\ref{mat_sec1}. 
		In the shell configuration adopted there, the bulk region is enclosed by $\Sigma$, so that $\partial\Omega_b=\Sigma$ and the portions $\Sigma_v$ and $\Sigma_{\mathrm q}$ of problem~\eqref{eps1} are empty (cf. Figure~\ref{bilayer}). 
		This allows us, in this first study, to focus exclusively on the asymptotic behavior associated with the film region, and avoids the technical complications arising from the external boundary conditions on the bulk region.}
		
		The coefficients $a,c,\mathbf B,\mathbf D$ are assumed to satisfy
		\(
		a,c \in C_b^1(\mathbb R^N\times[0,\infty)),
		\mathbf B,\mathbf D\in C_b^1(\mathbb R^N\times[0,\infty);\mathbb R_{sym}^{N\times N}),
		\)
		together with the conditions
		\mich{\[
		0<c_0\leq a(\mathbf x,t),\ c(\mathbf x,t),
		\qquad
		c_0|\xi|^2 \leq \xi\cdot\mathbf B(\mathbf x,t)\,\xi,
		\qquad
		c_0|\xi|^2 \leq \xi\cdot\mathbf D(\mathbf x,t)\,\xi,
		\]
				for some constant $c_0>0$, for every $\xi\in\mathbb R^N$ and every $(\mathbf x,t)\in\mathbb R^N\times[0,\infty)$. \mich{Starting from Section~\ref{mat_sec4} we shall further assume $c=a$, as discussed there.}}
		
		The mathematical analysis relies on the tangential differential operators $\nabla_\Sigma$ and $\Div_\Sigma$ introduced in Section~\ref{mat_sec1}. In addition, a fundamental role is played by an averaging operator acting along the film direction. 
		\mich{In particular, given a function $\varphi:\Omega_\varepsilon\to\mathbb R^m$, with $m\geq1$, the corresponding average is defined componentwise by}
		\begin{align}
			\mM_\varepsilon(\varphi)(y) = \frac1\varepsilon \int_0^\varepsilon \mich{\varphi^\sharp(y,r)}\,J(y,r)\,\d r, \qquad y\in\Sigma,
			\label{average}
		\end{align}
		\mich{where $J$ is the Jacobian of the change of variables in the film region $\Omega_\varepsilon$, defined in \eqref{J} below.}
		
		Averaging operators of the form \eqref{average} have been extensively employed in the analysis of thin-domain problems and thin-film limits; see, for example, \cite{Miura1,Miura3,Miura2}. The same approach is adopted here to investigate the thermal transmission problem in the film-bulk domain, \mich{and the properties of $\mM_\varepsilon$ needed in the sequel are collected in Section~\ref{mat_sec3}.}
		
		\mich{The operator $\mM_\varepsilon$ also makes the degeneracy of the unscaled thin-film limit explicit. Indeed, the change of variables in the thin domain yields}
		\[
		\int_{\Omega_\varepsilon}f\dx = \varepsilon \int_\Sigma \mM_\varepsilon(f)\, \d\mathcal H^{N-1},
		\qquad f\in L^1(\Omega_\varepsilon),
		\]
		\mich{where $\mathcal H^{N-1}$ denotes the $(N-1)$-dimensional Hausdorff measure on $\Sigma$. Hence, for data bounded uniformly with respect to $\varepsilon$, every volumetric contribution from $\Omega_\varepsilon$ is of order $\varepsilon$ and disappears as $\varepsilon\to0$, so that the film no longer participates in the overall thermal balance. This is precisely the degeneracy displayed by problem~\eqref{thinfilm01}, and the reason why the scaling assumptions leading to problem~\eqref{eps1} are introduced. The rescaled system analyzed in the sequel is written explicitly in \eqref{eps1b}, and the corresponding thin-film limit problem in \eqref{thinfilm11b}.}		

        \subsection{Preliminary Results and Mathematical Tools}
		\label{mat_sec3}
		
		We introduce the preliminary results and the mathematical tools required for the asymptotic analysis of the rescaled thermal transmission problem. The relevant function spaces are first recalled, together with several geometric properties of the film region $\Omega_\eps$.
		
		Subsequently, the averaging operator along the film direction is investigated and its main properties are established. 
		Particular attention is devoted to the estimates relating volumetric quantities defined on $\Omega_\eps$ to their surface counterparts on $\Sigma$, since such results constitute a fundamental ingredient in the derivation of the thermal transmission problem in the thin-film limit configuration (cf. in particular Lemma~\ref{tracelemma}).
		
		The section concludes with additional estimates involving the spatial and temporal derivatives of the averaging operator, which will be employed repeatedly in the well-posedness, convergence and error analyses developed in Section~\ref{mat_sec4}.
		\subsubsection{Main Functional Spaces}
		\label{add1}
		For any normed space $X$, its norm is denoted by $\norm{\cdot}_X$, while its dual space is denoted by $X'$.
		
		Let $\Omega_A$ denote generically either $\Omega$, $\Omega_\eps$ or $\Omega_b$.
		For $1\le p\le\infty$ and $k\in\mathbb N$, we denote by
		$L^p(\Omega_A)$ and $W^{k,p}(\Omega_A)$
		the classical Lebesgue and Sobolev spaces defined on $\Omega_A$,\mich{ and we set $H^k(\Omega_A)=W^{k,2}(\Omega_A)$.
		Since $\Sigma$ is of class $C^3$, the spaces $L^p(\Sigma)$, $p\in[1,\infty]$, can also be introduced, together with the spaces $H^k(\Sigma)$, obtained by replacing classical derivatives with tangential derivatives in their definition. The trace operator on $\Sigma$ is denoted by $\mathrm{Tr}_\Sigma$; whenever a function is defined on $\Omega_\eps$ only, the trace is understood to be taken from within $\Omega_\eps$.}
		
		Furthermore, for any interval $I\subset\R$ and any Banach space $X$, $C^0(I;X)$ (respectively, $C^0_b(I;X)$) denotes the space of continuous functions (respectively, bounded continuous functions) mapping from $I$ to $X$. Analogously, the spaces $C^k(I;X)$ and $C^k_b(I;X)$ are defined for arbitrary integers $k\geq1$.
		
		The notation $C^{0,\gamma}(I;X)$, with $\gamma\in(0,1]$, denotes the space of $\gamma$-H\"{o}lder continuous functions with values in $X$. In the particular case $\gamma=1$, the corresponding space coincides with the space of Lipschitz continuous functions with values in $X$.
		
		\subsubsection{Properties of the Film Region}
		\label{add2}
		For $y\in \Sigma$ and $r\in[-\delta,\delta]$ we set
		\begin{align}
			J(y,r)=\prod_{i=1}^{N-1}\bigl(1-rk_i(y)\bigr)=\mich{\det\bigl(\bI-rW(y)\bigr).}
			\label{J}
		\end{align}
		The function $J$ represents the Jacobian associated with the change-of-variables formula
		\begin{align}
			\int_{\Omega_\eps}\varphi(\mathbf x)\dx =\int_\Sigma \int_0^{\eps}\varphi^\sharp(y,r)J(y,r)\d r \d \mathcal H^{N-1}(y)
			\label{change}
		\end{align}
		for any function $\varphi\in L^1(\Omega_\eps)$, \mich{where $\mathcal H^{N-1}$ denotes the $(N-1)$-dimensional Hausdorff measure on $\Sigma$. 
		The same Jacobian governs the corresponding change of variables on the parallel surfaces: since the area element of the surface at distance $r$ from $\Sigma$ is $J(y,r)\d\mathcal H^{N-1}(y)$, recalling \eqref{sigmap} we have
		\begin{align}
			\int_{\Sigma_{\mathrm p}}\varphi\,\d \mathcal H^{N-1} =\int_\Sigma \varphi^\sharp(y,\eps)J(y,\eps)\d \mathcal H^{N-1}(y)
			\label{changesurf}
		\end{align}
		for any function $\varphi\in L^1(\Sigma_{\mathrm p})$. 
		Since $J$ is the product of the $N-1$ factors bounded in \eqref{controlcurv}, and since the principal curvatures are bounded, there exists a constant $C_J\geq1$ such that}
		\begin{align}
			\frac1 {C_J}\leq J(y,r)\leq C_J,\quad \normmm{\partial_ rJ(y,r)}\leq C_J,\quad \forall y\in \Sigma,\ r\in[-\delta,\delta].
			\label{controlJ}
		\end{align}
		Furthermore, owing to the regularity of the principal curvatures,
		\begin{align}
			\normmm{{J(y,r)}-1}\leq C\eps,\quad \normmm{\nabla_\Sigma J(y,r)}\leq C\eps,\quad \forall y\in \Sigma,\ r\in[0,\eps],
			\label{controlJ2}
		\end{align}
		holds for some constant $C>0$, where $\nabla_\Sigma J$ denotes the tangential gradient of $J$ with respect to the variable $y$. \mich{Using \eqref{change} together with the estimates above, the following equivalence is immediately obtained: for any $p\in[1,\infty)$,
		\begin{align}
			\frac 1C\norm{\varphi}_{L^p(\Omega_\eps)}^p\leq \int_\Sigma \int_0^\eps \normmm{\varphi^\sharp(y,r)}^p\d r\d \mathcal H^{N-1}(y)\leq C\norm{\varphi}_{L^p(\Omega_\eps)}^p,\quad\forall \varphi\in L^p(\Omega_\eps),
			\label{Lp}
		\end{align}
		which entails, for the constant extension along the normal direction,
		\begin{align}
			\frac {\eps^\frac1p}C\norm{\eta}_{L^p(\Sigma)}\leq \norm{\widetilde\eta}_{L^p(\Omega_\eps)}\leq C\eps^\frac1p\norm{\eta}_{L^p(\Sigma)},\quad \forall \eta\in L^p(\Sigma),\ p\in[1,\infty).
			\label{constext}
		\end{align}
		Moreover, it holds
		\begin{align}
			\norm{\nabla \widetilde \eta}_{L^2(\Omega_\eps)}\leq C\eps^\frac12\norm{\nabla_\Sigma\eta}_{L^2(\Sigma)},\quad\forall \eta\in H^1(\Sigma),
			\label{constextgrad}
		\end{align}
		and, recalling that $d$ is the signed distance function from $\Sigma$, that $\normmm{d}\leq C\eps$ in $\Omega_\eps$ and that $\nabla\widetilde\eta=(\bI-dW)^{-1}\widetilde{\nabla_\Sigma\eta}$ in $\mathcal N_\delta$, so that $\nabla\widetilde\eta-\widetilde{\nabla_\Sigma\eta}=\bigl((\bI-dW)^{-1}-\bI\bigr)\widetilde{\nabla_\Sigma\eta}$ is of order $\eps$ pointwise, \eqref{constext} with $p=2$ yields (cf. also \cite[Lemma 2.1]{Miura2})
		\begin{align}
			\norm{\nabla\widetilde\eta-\widetilde{\nabla_\Sigma\eta}}_{L^2(\Oe)}\leq C\eps^\frac32\norm{\nabla_\Sigma\eta}_{L^2(\Sigma)},\quad\forall \eta\in H^1(\Sigma).
			\label{controleta}
		\end{align}}
		
		We also introduce the notation
		\[
		\nabla_{\bn_{\Sigma}} \varphi:=\nabla \varphi\cdot \widetilde{\bn_{\Sigma}}
		\qquad\text{in }\Omega_\eps,
		\qquad
		\mich{\partial_{\bn_{\Sigma}}\varphi:=\mathrm{Tr}_\Sigma\bigl(\nabla_{\bn_{\Sigma}}\varphi\bigr)
		\qquad\text{on }\Sigma,}
		\]
		where $\bn_{\Sigma}$ denotes the outward unit normal vector field to $\Sigma$. 
		The previous definitions imply the chain rules
		\begin{align}
			\partial_r \varphi^\sharp(y,r)=(\nabla_{\bn_{\Sigma}}\varphi)^\sharp(y,r),\quad (\widetilde\eta)^\sharp(y,r)=\eta(y),
			\label{chainrules}
		\end{align}
		for sufficiently smooth functions $\varphi$ defined on $\Omega_\eps$ and for any function $\eta$ defined on $\Sigma$.
		
		\subsubsection{\mich{Properties of the Averaging Operator}}
		\label{add3}
		\mich{Throughout this subsection $\varepsilon\in(0,\delta)$, and $\mM_\eps$ denotes the averaging operator \eqref{average} introduced in Section~\ref{mat_sec2}. The motivation for that definition is provided by the following basic formula, an immediate consequence of \eqref{change} and of the second identity in \eqref{chainrules},
		\begin{align}
			\int_{\Omega_\eps} \varphi(\mathbf x)\widetilde\eta(\mathbf x)\dx=\eps\int_\Sigma \mM_\eps (\varphi)(y)\,\eta(y)\d \mathcal H^{N-1}(y),\label{basic}
		\end{align}
		which is exploited throughout the subsequent analysis.} Following and extending the presentation in \cite{Miura2}, some properties of the averaging operator are now presented.
		
		First, some basic properties are summarized. 
		For the proofs, reference is made to \cite[Lemma 5.1, Lemma 5.2]{Miura2} and \cite[Lemma 4.2]{Miura3}.
		
		\begin{lemma}
			\label{basiclemma}
			\mich{Let $p\in[1,\infty]$. Then, for any $\varphi\in L^p(\Omega_\eps)$, it holds $\mM_\eps(\varphi)\in L^p(\Sigma)$ and}
			\begin{align}
				\norm{\mM_\eps(\varphi)}_{L^p(\Sigma)}\leq C\eps^{-\frac1p}\norm{\varphi}_{L^p(\Omega_\eps)},
				\label{ctrl1}
			\end{align}
			for some $C>0$. \mich{If moreover $\varphi\in W^{1,p}(\Omega_\eps)$, then}
			\begin{align}
				\norm{\varphi-\widetilde{\mM_\eps(\varphi)}}_{L^p(\Omega_\eps)}\leq C\eps\norm{\varphi}_{W^{1,p}(\Omega_\eps)},
				\label{ctrl2}
			\end{align}
			for some $C>0$.
		\end{lemma}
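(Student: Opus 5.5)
The proof works entirely in the film coordinates $(y,r)\in\Sigma\times(0,\eps)$, using the change of variables \eqref{change} and the bounds \eqref{controlJ} on $J$. By density it suffices to take $\varphi$ smooth up to the boundary of $\Omega_\eps$; both sides of \eqref{ctrl1} and \eqref{ctrl2} are continuous in the relevant norms, so the estimates then pass to the limit.

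\emph{Estimate \eqref{ctrl1}.} Assume first $p\in[1,\infty)$. For a.e.\ $y\in\Sigma$, apply H\"older's (or Jensen's) inequality on $(0,\eps)$ with the probability measure $\eps^{-1}\d r$:
\[
|\mM_\eps(\varphi)(y)|^p\le \frac{C_J^p}{\eps}\int_0^\eps|\varphi^\sharp(y,r)|^p\,\d r .
\]
Integrating over $\Sigma$ and using \eqref{Lp} gives $\norm{\mM_\eps(\varphi)}_{L^p(\Sigma)}^p\le C\eps^{-1}\norm{\varphi}_{L^p(\Omega_\eps)}^p$, which is \eqref{ctrl1}. The case $p=\infty$ is immediate from $|J|\le C_J$. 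Measurability of $\mM_\eps(\varphi)$ follows from Fubini's theorem.

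\emph{Estimate \eqref{ctrl2}.} Since $(\widetilde{\mM_\eps(\varphi)})^\sharp(y,r)=\mM_\eps(\varphi)(y)$ by \eqref{chainrules}, we split the difference as
\[
\varphi^\sharp(y,r)-\mM_\eps(\varphi)(y)=\frac1\eps\int_0^\eps\bigl(\varphi^\sharp(y,r)-\varphi^\sharp(y,s)\bigr)J(y,s)\,\d s+\varphi^\sharp(y,r)\Bigl(1-\frac1\eps\int_0^\eps J(y,s)\,\d s\Bigr).
\]
\begin{itemize}
\item \emph{Second term.} By \eqref{controlJ2}, $|1-J|\le C\eps$, so this term is bounded pointwise by $C\eps|\varphi^\sharp(y,r)|$. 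Its $L^p(\Omega_\eps)$ norm is therefore at most $C\eps\norm{\varphi}_{L^p(\Omega_\eps)}$ by \eqref{Lp}.
\item \emph{First term.} Write $\varphi^\sharp(y,r)-\varphi^\sharp(y,s)=\int_s^r\partial_\tau\varphi^\sharp(y,\tau)\,\d\tau$. By \eqref{chainrules}, $\partial_\tau\varphi^\sharp=(\nabla_{\bn_\Sigma}\varphi)^\sharp$, so $|\partial_\tau\varphi^\sharp|\le|\nabla\varphi|^\sharp$. Hence the first term is bounded pointwise by $C_J\int_0^\eps|\nabla\varphi|^\sharp(y,\tau)\,\d\tau$, uniformly in $r$. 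H\"older's inequality gives
\[
\Bigl(\int_0^\eps|\nabla\varphi|^\sharp\,\d\tau\Bigr)^p\le\eps^{p-1}\int_0^\eps|\nabla\varphi|^{\sharp\,p}\,\d\tau.
\]
Integrating in $r\in(0,\eps)$ contributes another factor $\eps$, so the $p$-th power of the integral is at most $C\eps^p\int_\Sigma\int_0^\eps|\nabla\varphi|^{\sharp\,p}\,\d\tau\,\d\mathcal H^{N-1}$. By \eqref{Lp}, this term is bounded in $L^p(\Omega_\eps)$ by $C\eps\norm{\nabla\varphi}_{L^p(\Omega_\eps)}$.
\end{itemize}
For $p=\infty$ the same pointwise bounds apply directly.

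Combining the two terms yields \eqref{ctrl2}. The only point needing care is that the absolutely continuous representative along normal lines exists for a.e.\ $y$. The density argument handles this, and it applies because $\Omega_\eps$ is a $C^2$ domain ($\Sigma$ is $C^3$), so smooth functions are dense in $W^{1,p}(\Omega_\eps)$ for $p<\infty$. For $p=\infty$ one instead uses that $W^{1,\infty}$ functions are Lipschitz on each normal segment. I do not expect any genuine obstacle here: the constants depend only on $C_J$ and the constant in \eqref{controlJ2}, and therefore not on $\eps$.
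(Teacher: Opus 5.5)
Your proof is correct and matches the standard argument the paper relies on. The paper gives no proof of its own and defers to Miura's Lemmas 5.1--5.2: Jensen/H\"older in the normal coordinates $(y,r)$ with \eqref{Lp} for \eqref{ctrl1}, and for \eqref{ctrl2} the fundamental theorem of calculus along normal segments plus $|1-J|\le C\varepsilon$. This is the same $K_1/K_2$ splitting the paper uses to prove Lemma~\ref{tracelemma}, and your treatment of regularity (density of smooth functions for $p<\infty$, Lipschitz continuity on normal segments for $p=\infty$) is adequate.
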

		A fundamental lemma, allowing one to compare the trace of a function over $\Sigma$ with its average on $\Omega_\eps$ along the film direction, and thus allowing the bulk and the film region to communicate, is the following. Its proof shares some similarities with \cite[Proof of Lemma 5.2]{Miura2}, but the application to the present context appears to be new. 
		
		\begin{lemma}
        \label{tracelemma}
			\mich{			Let $\varphi\in H^1(\Omega_\eps)$. Then, for any $\eps\in(0,\delta)$, it holds}
			\begin{align}
				\norm{\mathrm{Tr}_\Sigma\varphi-\mM_\eps(\varphi)}_{L^2(\Sigma)}\leq C\eps^\frac12\norm{\varphi}_{H^1(\Omega_\eps)},
				\label{trace}
			\end{align}
			where $\mathrm{Tr}_\Sigma$ is the trace operator over $\Sigma$, for some $C>0$ depending only on $\Sigma$.
		\end{lemma}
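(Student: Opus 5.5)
The plan is to argue by density with smooth functions $\varphi\in C^1(\overline{\Omega_\eps})$, work in film coordinates \eqref{sharp}, and pass to $H^1(\Omega_\eps)$ at the end by continuity of both sides. Since $\mathrm{Tr}_\Sigma\varphi(y)=\varphi^\sharp(y,0)$, the first step is to rewrite the difference using $\frac1\eps\int_0^\eps J\,\d r$ as a normalizing weight:
\[
\mathrm{Tr}_\Sigma\varphi(y)-\mM_\eps(\varphi)(y)=\varphi^\sharp(y,0)\Bigl(1-\frac1\eps\int_0^\eps J(y,r)\,\d r\Bigr)+\frac1\eps\int_0^\eps\bigl(\varphi^\sharp(y,0)-\varphi^\sharp(y,r)\bigr)J(y,r)\,\d r .
\]
By \eqref{controlJ2}, the bracket in the first term is bounded by $C\eps$. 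For the second term, the chain rule \eqref{chainrules} gives $\varphi^\sharp(y,0)-\varphi^\sharp(y,r)=-\int_0^r(\nabla_{\bn_\Sigma}\varphi)^\sharp(y,s)\,\d s$.

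The second term is then estimated pointwise. Using \eqref{controlJ} and Cauchy--Schwarz in $s$ yields
\[
\Bigl|\frac1\eps\int_0^\eps(\varphi^\sharp(y,0)-\varphi^\sharp(y,r))J\,\d r\Bigr|\le C\int_0^\eps|(\nabla\varphi)^\sharp(y,s)|\,\d s\le C\eps^{\frac12}\Bigl(\int_0^\eps|(\nabla\varphi)^\sharp(y,s)|^2\d s\Bigr)^{\frac12}.
\]
Squaring, integrating over $\Sigma$ and applying \eqref{Lp} with $p=2$ bounds the $L^2(\Sigma)$ norm of this term by $C\eps^{1/2}\norm{\nabla\varphi}_{L^2(\Omega_\eps)}$.

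The first term contributes $C\eps\norm{\mathrm{Tr}_\Sigma\varphi}_{L^2(\Sigma)}$, so what is needed is a trace inequality on the thin domain whose constant is tracked in $\eps$. I will use the same one-dimensional argument: for every $r\in(0,\eps)$, $|\varphi^\sharp(y,0)|\le|\varphi^\sharp(y,r)|+\int_0^\eps|\partial_s\varphi^\sharp(y,s)|\,\d s$. Averaging in $r$ over $(0,\eps)$ and squaring gives
\[
|\varphi^\sharp(y,0)|^2\le \frac{2}{\eps}\int_0^\eps|\varphi^\sharp|^2\d r+2\eps\int_0^\eps|(\nabla\varphi)^\sharp|^2\d s .
\]
Integrating over $\Sigma$ and using \eqref{Lp} then gives $\norm{\mathrm{Tr}_\Sigma\varphi}_{L^2(\Sigma)}\le C\eps^{-1/2}\norm{\varphi}_{H^1(\Omega_\eps)}$. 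Consequently the first term is bounded by $C\eps\cdot\eps^{-1/2}\norm{\varphi}_{H^1(\Omega_\eps)}=C\eps^{1/2}\norm{\varphi}_{H^1(\Omega_\eps)}$. Adding the two contributions gives \eqref{trace}.

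The main obstacle is keeping every constant independent of $\eps$. The $\eps^{-1/2}$ loss in the trace inequality has to be compensated exactly by the $O(\eps)$ deviation of $J$ from $1$; this is why that term is handled through \eqref{controlJ2} rather than absorbed into the average. Besides this, one must justify the density and trace step: smooth functions are dense in $H^1(\Omega_\eps)$ because $\Omega_\eps$ is a $C^2$ tubular shell, and both sides of \eqref{trace} are continuous in $H^1(\Omega_\eps)$ for fixed $\eps$ by \eqref{ctrl1} and the trace theorem. The resulting constant depends only on $C_J$ and on the constant in \eqref{Lp}, that is, only on $\Sigma$.
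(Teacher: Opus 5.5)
Your proof is correct, but it splits off the Jacobian defect differently from the paper.

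The paper writes $\mathrm{Tr}_\Sigma\varphi-\mM_\eps(\varphi)=K_1+K_2$, with $K_1=\frac1\eps\int_0^\eps\bigl(\mathrm{Tr}_\Sigma\varphi(y)-\varphi^\sharp(y,r)\bigr)\,\d r$ (unweighted) and $K_2=\frac1\eps\int_0^\eps\varphi^\sharp(y,r)\bigl(1-J(y,r)\bigr)\,\d r$. The $O(\eps)$ factor $1-J$ therefore multiplies the interior values. This gives directly the pointwise bound $|K_1|+|K_2|\le C\int_0^\eps\bigl(|(\nabla_{\bn_\Sigma}\varphi)^\sharp|+|\varphi^\sharp|\bigr)\,\d r$, and one Cauchy--Schwarz together with \eqref{Lp} finishes.

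You instead attach $1-\frac1\eps\int_0^\eps J\,\d r$ to the boundary value $\varphi^\sharp(y,0)$. This forces you to prove separately the scaled trace bound $\norm{\mathrm{Tr}_\Sigma\varphi}_{L^2(\Sigma)}\le C\bigl(\eps^{-1/2}\norm{\varphi}_{L^2(\Oe)}+\eps^{1/2}\norm{\nabla\varphi}_{L^2(\Oe)}\bigr)$. That bound is exactly \eqref{ctrltrace1}, which the paper obtains only afterwards, as a corollary of the present lemma combined with \eqref{ctrl1}. Your one-dimensional averaging argument proves it independently, so there is no circularity; the logical order is simply reversed.

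Both routes rest on the same two ingredients: the fundamental theorem of calculus along normal fibres, and $|J-1|\le C\eps$ from \eqref{controlJ2}. The paper's version is one step shorter. Yours yields the $\eps$-weighted trace inequality as a standalone byproduct. It also makes explicit the density argument needed to apply the fundamental theorem of calculus to $H^1$ functions, which the paper leaves implicit.
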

		\begin{proof}
			First, we have, for almost any $y\in \Sigma$,
			\begin{align}
				\mathrm{Tr}_\Sigma\varphi(y)-\mM_\eps(\varphi)(y)=\mathrm{Tr}_\Sigma\varphi(y)-\frac1\varepsilon\int_0^\varepsilon \varphi^\sharp(y,r)J(y,r)\d r=K_1+K_2,
				\label{first}
			\end{align}
			where 
			\begin{align*}
				K_1=\frac1\eps\int_0^\eps \bigl(\mathrm{Tr}_\Sigma\varphi(y)-\varphi^\sharp(y,r)\bigr)\d r,\quad K_2=\frac1\eps \int_0^\eps \varphi^\sharp(y,r)\bigl(1-J(y,r)\bigr)\d r.
			\end{align*}
			Now, concerning $K_1$, we have, by \eqref{chainrules},
			\begin{align*}
				\normmm{K_1}\leq \frac1\eps \int_0^\eps \int_0^r \normmm{\partial_r\varphi^\sharp(y,s)}\d s\d r\leq \int_0^\eps \normmm{\partial_r\varphi^\sharp(y,s)}\d s=\int_0^\eps \normmm{(\nabla_{\bn_{\Sigma}}\varphi)^\sharp (y,s)}\d s,
			\end{align*}
			whereas, using \eqref{controlJ2}, it holds
			\begin{align*}
				\normmm{K_2}\leq \frac1\eps\int_0^\eps \normmm{\varphi^\sharp(y,r)}\normmm{1-J(y,r)}\d r\leq C \int_0^\eps \normmm{\varphi^\sharp(y,r)}\d r.
			\end{align*}
			As a consequence, from \eqref{first}, we get, by the Cauchy-Schwarz inequality,
			\begin{align*}
				& \normmm{\mathrm{Tr}_\Sigma\varphi(y)-\mM_\eps(\varphi)(y)}\\&\leq C\left(\int_0^\eps \Bigl(\normmm{(\nabla_{\bn_{\Sigma}}\varphi)^\sharp (y,r)}+  \normmm{\varphi^\sharp(y,r)}\Bigr) \d r\right)\\&\leq 
				C\eps^\frac12\left(\int_0^\eps \Bigl(\normmm{(\nabla_{{\bn_{\Sigma}}}\varphi)^\sharp (y,r)}^2+  \normmm{\varphi^\sharp(y,r)}^2\Bigr) \d r\right)^\frac12,
			\end{align*}
			so that, by taking the square and integrating over $y\in \Sigma$, we infer 
			\begin{align*}
				& \int_\Sigma\normmm{\mathrm{Tr}_\Sigma\varphi(y)-\mM_\eps(\varphi)(y)}^2\d\mathcal H^{N-1}(y)\\&\leq 
				C\eps\int_\Sigma\left(\int_0^\eps \Bigl(\normmm{(\nabla_{{\bn_{\Sigma}}}\varphi)^\sharp (y,r)}^2+  \normmm{\varphi^\sharp(y,r)}^2\Bigr) \d r\right)\d\mathcal H^{N-1}(y),
			\end{align*}
			from which, recalling \eqref{Lp}, we conclude that \eqref{trace} holds.
		\end{proof}
		Additionally, we report here some useful controls on the gradients of the averaged function, whose proof can be found, e.g., in \cite[Lemma 5.7]{Miura2} and \cite[Eq. (4.8)]{Miura3}.
		
		\begin{lemma}
			\label{gradlemma}
			Let $\varphi\in W^{1,p}(\Omega_\eps)$, $p\in[1,\infty]$. Then $\mM_\eps(\varphi)\in W^{1,p}(\Sigma)$ and
			\begin{align}
				\label{P1}  &\norm{\nabla_\Sigma \mM_\eps(\varphi)}_{L^p(\Sigma)}\leq C\eps^{-\frac1p}\norm{\varphi}_{W^{1,p}(\Omega_\eps)},\\&
				\norm{\nabla_\Sigma\mM_\eps(\varphi)-\mM_\eps(\widetilde \bP\nabla\varphi)}_{L^p(\Sigma)}\leq C\eps^{1-\frac1p}\norm{\varphi}_{W^{1,p}(\Omega_\eps)},
				\label{P2}
			\end{align}
			for some $C>0$.
		\end{lemma}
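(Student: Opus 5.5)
We prove Lemma~\ref{gradlemma} by working in the film coordinates \eqref{sharp} and differentiating the definition \eqref{average} under the integral sign. It suffices to treat smooth $\varphi$ (say $\varphi\in C^1(\overline{\Omega_\eps})$) and then argue by density. Indeed, \eqref{ctrl1} shows that $\mM_\eps$ is continuous from $L^p(\Omega_\eps)$ to $L^p(\Sigma)$, and the resulting bounds pass to the limit in the distributional sense. For $p=\infty$ we use weak-$*$ approximation instead, or simply note that the pointwise bounds below hold directly.

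\textbf{Step 1: tangential gradient of the average.} Fix $r$ and differentiate $y\mapsto \varphi^\sharp(y,r)=\varphi(y+r\bn_\Sigma(y))$ tangentially. By the chain rule and \eqref{weingarten},
\[
\nabla_\Sigma \varphi^\sharp(y,r)=(\bI-rW(y))\,\bP(y)\,(\nabla\varphi)^\sharp(y,r),
\]
where we use that $W$ is symmetric and maps into the tangent space. Hence
\[
\nabla_\Sigma\mM_\eps(\varphi)=\frac1\eps\int_0^\eps\Bigl[(\bI-rW)\bP(\nabla\varphi)^\sharp J+\varphi^\sharp\nabla_\Sigma J\Bigr]\d r .
\]
Moreover $\widetilde\bP^\sharp=\bP(y)$, because $\bn_\Sigma\circ\pi$ is constant along normals. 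Therefore
\[
\mM_\eps(\widetilde\bP\nabla\varphi)=\frac1\eps\int_0^\eps \bP(\nabla\varphi)^\sharp J\,\d r .
\]

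\textbf{Step 2: the two estimates.} Subtracting the two expressions leaves the remainder
\[
\frac1\eps\int_0^\eps\Bigl[-rW\bP(\nabla\varphi)^\sharp J+\varphi^\sharp\nabla_\Sigma J\Bigr]\d r .
\]
Since $r\le\eps$, $W$ is bounded, $J$ is bounded by \eqref{controlJ}, and $|\nabla_\Sigma J|\le C\eps$ by \eqref{controlJ2}, the remainder is bounded pointwise by
\[
C\int_0^\eps\bigl(|(\nabla\varphi)^\sharp|+|\varphi^\sharp|\bigr)\d r .
\]
Hölder's inequality in $r$ gives the bound
\[
C\eps^{1-1/p}\Bigl(\int_0^\eps\bigl(|(\nabla\varphi)^\sharp|^p+|\varphi^\sharp|^p\bigr)\d r\Bigr)^{1/p}.
\]
Taking the $L^p(\Sigma)$ norm and applying \eqref{Lp} yields \eqref{P2}; for $p=\infty$ the same pointwise bound applies directly. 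Estimate \eqref{P1} then follows from \eqref{P2} together with \eqref{ctrl1} applied to $\widetilde\bP\nabla\varphi$, which gives
\[
\norm{\mM_\eps(\widetilde\bP\nabla\varphi)}_{L^p(\Sigma)}\le C\eps^{-1/p}\norm{\nabla\varphi}_{L^p(\Omega_\eps)},
\]
while $\eps^{1-1/p}\le\eps^{-1/p}$ for $\eps<1$.

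\textbf{Main obstacle.} The delicate point is the chain-rule identity for $\nabla_\Sigma\varphi^\sharp$. One must check that the normal component drops out: $\nabla_\Sigma(r\bn_\Sigma)=-rW$ is tangential, so only $(\bI-rW)\bP$ appears, and the sign convention fixed in \eqref{weingarten} is the one consistent with \eqref{J}. Justifying differentiation under the integral sign and the membership $\mM_\eps(\varphi)\in W^{1,p}(\Sigma)$ for nonsmooth $\varphi$ is routine via the density argument above. For $p=\infty$ one uses $W^{1,\infty}=C^{0,1}$ together with the uniform Lipschitz bound that follows from the same formula.
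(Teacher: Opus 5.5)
Your proposal is correct and follows the same route as the paper. The paper cites from Miura's work the pointwise bounds $|\nabla_\Sigma\mM_\eps(\varphi)|\le C\mM_\eps(|\varphi|+|\nabla\varphi|)$ and $|\nabla_\Sigma\mM_\eps(\varphi)-\mM_\eps(\widetilde\bP\nabla\varphi)|\le C\eps\,\mM_\eps(|\varphi|+|\nabla\varphi|)$ and then applies \eqref{ctrl1}. You instead derive the second bound yourself from the chain-rule identity $\nabla_\Sigma\varphi^\sharp=(\bI-rW)\bP(\nabla\varphi)^\sharp$ together with $|\nabla_\Sigma J|\le C\eps$, and you deduce \eqref{P1} from \eqref{P2} and \eqref{ctrl1}; for $p=\infty$ the cleanest justification is that the identity, once established in $W^{1,1}(\Omega_\eps)$ by density, applies to $W^{1,\infty}(\Omega_\eps)\subset W^{1,1}(\Omega_\eps)$, so the pointwise bound gives the $L^\infty$ estimate directly.
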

		\begin{proof}
			\mich{The proof in \cite[Lemma 5.7]{Miura2} is actually given for the case $p=2$, but one can see from it that}
			\begin{align*}
				\normmm{\nabla_\Sigma \mM_\eps(\vphi)}\leq C\bigl(\mM_\eps(\normmm{\varphi})+\mM_\eps(\normmm{\nabla\varphi})\bigr),
			\end{align*}
			and thus, by \eqref{ctrl1}, we immediately infer \eqref{P1} for any $p\in[1,\infty]$.
			Similarly, we point out that, from \cite[Eq. (4.8)]{Miura3}, we have
			\begin{align*}
				\normmm{\nabla_\Sigma\mM_\eps(\varphi)-\mM_\eps(\widetilde \bP\nabla\varphi)}\leq C\eps\,\mM_\eps(\normmm{\varphi}+\normmm{\nabla \varphi}),\quad\text{ on }\Sigma,
			\end{align*}
			so that \eqref{P2} follows after an application of \eqref{ctrl1}.
		\end{proof}
		\mich{Making use of Lemma \ref{tracelemma}, we also obtain the following control of the traces on $\Sigma$ and on $\Sigma_{\mathrm p}$.}
		
		\mich{\begin{lemma}
			\label{ctrltraces}
			Let $f\in H^1(\Oe)$. Then it holds
			\begin{align}
				&\norm{\mathrm{Tr}_\Sigma f}_{L^2(\Sigma)}\leq C\eps^{\onehalf}\norm{f}_{H^1(\Oe)}+C\eps^{-\onehalf}\norm{f}_{L^2(\Oe)},\label{ctrltrace1}\\&
				\norm{\mathrm{Tr}_{\Sigma_{\mathrm p}} f}_{L^2(\Sigma_{\mathrm p})}\leq C\eps^{\onehalf}\norm{f}_{H^1(\Oe)}+C\eps^{-\onehalf}\norm{f}_{L^2(\Oe)}.\label{ctrltrace2}
			\end{align}
			In particular, since $\eps\in(0,1)$, each of the two estimates entails
			\begin{align}
				\norm{\mathrm{Tr} f}^2_{L^2(\Sigma)}+\norm{\mathrm{Tr} f}^2_{L^2(\Sigma_{\mathrm p})}\leq C\bigl(\eps^{-1}\norm{f}^2_{L^2(\Oe)}+\eps\norm{\nabla f}^2_{L^2(\Oe)}\bigr)\leq C\eps^{-1}\norm{f}^2_{H^1(\Oe)},
				\label{ctrltrace3}
			\end{align}
			so that the trace of a function of $H^1(\Oe)$ is controlled in $L^2$ on the boundary of the film region with a weight $\eps^{-\onehalf}$ which cannot be dispensed with, as the constant function $f\equiv1$ shows.
            \label{tracelemma1}
		\end{lemma}
		\begin{proof}
			For the sake of simplicity, in this proof we do not distinguish between a function and its trace. Estimate \eqref{ctrltrace1} comes immediately from \eqref{ctrl1} and \eqref{trace}: indeed, it is enough to write
			\begin{align*}
				\norm{f}_{L^2(\Sigma)}\leq \norm{f-\mM_\eps(f)}_{L^2(\Sigma)}+\norm{\mM_\eps(f)}_{L^2(\Sigma)}\leq C\eps^{\onehalf}\norm{f}_{H^1(\Oe)}+C\eps^{-\onehalf}\norm{f}_{L^2(\Oe)}.
			\end{align*}
			To prove \eqref{ctrltrace2}, let us now observe that, using \eqref{changesurf}, \eqref{Lp} and \eqref{chainrules}, together with \eqref{controlJ} and \eqref{ctrltrace1}, we have
			\begin{align*}
				&\int_{\Sigma_{\mathrm p}}\normmm{f}^2\d\mathcal H^{N-1}=\int_\Sigma \normmm{f^\sharp(y,\eps)}^2J(y,\eps)\d\mathcal H^{N-1}\\&\leq 2\int_\Sigma \normmm{f^\sharp(y,\eps)-f(y)}^2J(y,\eps)\d\mathcal H^{N-1}+2\int_\Sigma \normmm{f}^2J(y,\eps)\d\mathcal H^{N-1}\\&
				\leq 
				C\eps\int_\Sigma \int_0^\eps \normmm{\partial_r  f^\sharp(y,s)}^2\d s\d\mathcal H^{N-1}+C\eps\norm{f}_{H^1(\Oe)}^2+C\eps^{-1}\norm{f}_{L^2(\Oe)}^2\\&
				\leq  
				C\eps\norm{f}_{H^1(\Oe)}^2+C\eps^{-1}\norm{f}_{L^2(\Oe)}^2,
			\end{align*}
			which gives \eqref{ctrltrace2} after taking the square root. Finally, squaring \eqref{ctrltrace1} and \eqref{ctrltrace2} and recalling that $\eps\leq\eps^{-1}$ for $\eps\in(0,1)$ yields \eqref{ctrltrace3}.
		\end{proof}}	
		As a consequence, we can prove a crucial generalization of \cite[Lemma 5.8]{Miura2}, including anisotropies, as follows.
		
		\begin{lemma}
			\label{anisolemma}
			Let $L\in C^{0,1}(\R^N;\R^{N\times N}_{sym})$. Then for any  $\vphi\in H^2(\Oe)$ and $\eta\in H^1(\Sigma)$ it holds 
			\begin{align}
				&\nonumber\normmm{\int_{\Oe} L\nabla \vphi\cdot \nabla \widetilde\eta\dx-\eps\int_{\Sigma}{L_{|\Sigma}}\nabla_\Sigma \mM_\eps(\vphi)\cdot \nabla_\Sigma \eta\d \mathcal H^{N-1}}\\&\nonumber\leq C\eps^\frac32\norm{L}_{C^{0,1}(\R^N)}\norm{\varphi}_{H^1(\Omega_\eps)}\norm{\nabla_\Sigma \eta}_{L^2(\Sigma)}\\&
				\quad +\nonumber C\eps^\frac32\norm{L}_{C^0_b(\R^N)}\norm{\varphi}_{H^2(\Omega_\eps)}\norm{\nabla_\Sigma \eta}_{L^2(\Sigma)}\\&\quad+C\eps\norm{L}_{C^0_b(\R^N)}\norm{\partial_{\bn_{\Sigma}} \vphi}_{L^2(\Sigma)}\norm{\nabla_\Sigma \eta}_{L^2(\Sigma)},  
				\label{controlb}
			\end{align}
			for some $C>0$. Additionally, in case $L(\mathbf x) \bP(\mathbf x)=\bP(\mathbf x)L(\mathbf x)$ for any $\mathbf x\in \Sigma,$ then 
			\begin{align}
				&\normmm{\int_{\Oe} L\nabla \vphi\cdot \nabla \widetilde\eta\dx-\eps\int_{\Sigma}L_{|\Sigma}\nabla_\Sigma \mM_\eps(\vphi)\cdot \nabla_\Sigma \eta\d \mathcal H^{N-1}}\leq C\eps^\frac32\norm{L}_{C^{0,1}(\R^N)}\norm{\varphi}_{H^1(\Omega_\eps)}\norm{\nabla_\Sigma \eta}_{L^2(\Sigma)}  
				\label{controlb2}.
			\end{align}

			More generally, if only $L\in C^{1}_b(\R^N;\R^{N\times N}_{sym})$, we deduce, for any $\eta\in H^2(\Sigma)$,
			{\begin{align}
				&\nonumber\normmm{\int_{\Oe} L\nabla \vphi\cdot \nabla \widetilde\eta\dx-\eps\int_{\Sigma}L_{|\Sigma}\nabla_\Sigma \mM_\eps(\vphi)\cdot \nabla_\Sigma \eta\d \mathcal H^{N-1}}\\&\nonumber\leq  C(\norm{L}_{C^{0,1}(\R^N)})\eps^\frac32\norm{\vphi}_{H^1(\Omega_\eps)}\norm{\eta}_{H^1(\Sigma)}\\&\quad +\, C\eps^{\onehalf}(\norm{\bP L_{|\Sigma}\bn_\Sigma}_{L^\infty(\Sigma)}+\norm{\bn_\Sigma L_{|\Sigma}\bP}_{W^{1,\infty}(\Sigma)})\norm{\vphi}_{H^1(\Omega_\eps)}\norm{\eta}_{H^2(\Sigma)}.
				\label{final}
			\end{align}}
		\end{lemma}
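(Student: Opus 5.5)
The plan is to pass from the volume integral to the surface one in a chain of substitutions, each of which costs at most $\eps^{3/2}$, until only a normal-derivative coupling term is left; that term is then treated in three different ways for the three estimates.

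\emph{Step 1: replace $\nabla\widetilde\eta$ by $\widetilde{\nabla_\Sigma\eta}$.} By \eqref{controleta} and Cauchy--Schwarz,
\[
\Bigl|\int_{\Oe}L\nabla\vphi\cdot\bigl(\nabla\widetilde\eta-\widetilde{\nabla_\Sigma\eta}\bigr)\dx\Bigr|\le C\eps^{\frac32}\norm{L}_{C^0_b}\norm{\vphi}_{H^1(\Oe)}\norm{\nabla_\Sigma\eta}_{L^2(\Sigma)}.
\]

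\emph{Step 2: freeze the coefficient on $\Sigma$.} Replace $L$ by its constant normal extension $\widetilde{L_{|\Sigma}}=L\circ\pi$. Since $|d|\le\eps$ in $\Oe$, we have $|L-L\circ\pi|\le\eps\norm{L}_{C^{0,1}}$ pointwise. Combined with \eqref{constext} (which gives $\norm{\widetilde{\nabla_\Sigma\eta}}_{L^2(\Oe)}\le C\eps^{1/2}\norm{\nabla_\Sigma\eta}_{L^2(\Sigma)}$), this costs $C\eps^{3/2}\norm{L}_{C^{0,1}}\norm{\vphi}_{H^1}\norm{\nabla_\Sigma\eta}_{L^2(\Sigma)}$.

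\emph{Step 3: average and split into tangential and normal parts.} The integrand is now $\nabla\vphi\cdot\widetilde{L_{|\Sigma}\nabla_\Sigma\eta}$, with a second factor constant in $r$. The basic formula \eqref{basic}, applied componentwise, turns the integral into
\[
\eps\int_\Sigma \mM_\eps(\nabla\vphi)\cdot L_{|\Sigma}\nabla_\Sigma\eta\,\d\mathcal H^{N-1}.
\]
Write $\nabla\vphi=\widetilde\bP\nabla\vphi+(\nabla_{\bn_\Sigma}\vphi)\widetilde{\bn_\Sigma}$. For the tangential part, \eqref{P2} with $p=2$ allows us to replace $\mM_\eps(\widetilde\bP\nabla\vphi)$ by $\nabla_\Sigma\mM_\eps(\vphi)$ at cost $\eps\cdot C\eps^{1/2}\norm{\vphi}_{H^1}\norm{L}_{C^0_b}\norm{\nabla_\Sigma\eta}$. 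This produces exactly the target term $\eps\int_\Sigma L_{|\Sigma}\nabla_\Sigma\mM_\eps(\vphi)\cdot\nabla_\Sigma\eta$. What remains is the normal coupling
\[
R:=\eps\int_\Sigma \mM_\eps(\nabla_{\bn_\Sigma}\vphi)\,\bn_\Sigma\cdot L_{|\Sigma}\bP\nabla_\Sigma\eta\,\d\mathcal H^{N-1}.
\]

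\emph{Step 4: the three cases for $R$.}
\begin{itemize}
\item[(i)] If $L\bP=\bP L$ on $\Sigma$, then $\bn_\Sigma\cdot L\bP=\bn_\Sigma\cdot\bP L=0$, so $R=0$ and \eqref{controlb2} follows.
\item[(ii)] For \eqref{controlb}, with $\vphi\in H^2$, the function $\nabla_{\bn_\Sigma}\vphi$ belongs to $H^1(\Oe)$ (the coefficients of $\widetilde{\bn_\Sigma}$ are $C^1$ in $\mathcal N_\delta$). Lemma~\ref{tracelemma} then lets us replace $\mM_\eps(\nabla_{\bn_\Sigma}\vphi)$ by $\partial_{\bn_\Sigma}\vphi$ at cost $\eps\cdot C\eps^{1/2}\norm{\vphi}_{H^2}$. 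The leftover is bounded by $C\eps\norm{L}_{C^0_b}\norm{\partial_{\bn_\Sigma}\vphi}_{L^2(\Sigma)}\norm{\nabla_\Sigma\eta}_{L^2(\Sigma)}$.
\item[(iii)] For \eqref{final}, where only $H^1$ regularity of $\vphi$ is available, bound $R$ directly using \eqref{ctrl1}:
\[
|R|\le \eps\,\norm{\mM_\eps(\nabla\vphi)}_{L^2(\Sigma)}\,\norm{\bn_\Sigma\cdot L\bP\nabla_\Sigma\eta}_{L^2(\Sigma)}\le C\eps^{\frac12}\norm{\vphi}_{H^1(\Oe)}\norm{\bn_\Sigma L_{|\Sigma}\bP}_{L^\infty}\norm{\eta}_{H^1(\Sigma)}.
\]
This is already of the form of the last line of \eqref{final}.
\end{itemize}

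The main obstacle is accounting for the full structure of \eqref{final}, namely the $\bP L\bn_\Sigma$ and $W^{1,\infty}$ norms together with $\eta\in H^2$. If in the derivation one keeps $L_{|\Sigma}$ rather than $\bP L\bP$ in the target term, an additional term $\eps\int_\Sigma(\bP L\bn_\Sigma\cdot\nabla_\Sigma\eta)\,\cdot\,$(normal data) appears. Treating it, or $R$ itself, by tangential integration by parts,
\[
\int_\Sigma \mM_\eps(\cdot)\,w\,\d\mathcal H^{N-1}, \qquad w=\bn_\Sigma L\bP\nabla_\Sigma\eta\in H^1(\Sigma),\quad \norm{w}_{H^1}\le\norm{\bn_\Sigma L\bP}_{W^{1,\infty}}\norm{\eta}_{H^2},
\]
naturally yields exactly those norms. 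I would first try the crude bound in (iii), and only if a sharper cancellation is needed, transfer the tangential derivative onto $w$ in this way. In either route, the Lipschitz contributions collect into the $C(\norm{L}_{C^{0,1}})\eps^{3/2}$ term of \eqref{final}.
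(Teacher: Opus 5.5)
Your argument is correct. For \eqref{controlb} and \eqref{controlb2} it is the paper's own proof. Your Steps 1--3 are the paper's terms $K_2$, $K_1$, $K_3$, in a different order. Your $R$ is exactly $K_4=\eps\int_\Sigma\mM_\eps(\nabla\vphi\cdot\widetilde{\bn_\Sigma})\,\bP L_{|\Sigma}\bn_\Sigma\cdot\nabla_\Sigma\eta\,\d\mathcal H^{N-1}$. It vanishes under the commutation condition and is otherwise compared with $\partial_{\bn_\Sigma}\vphi$ through Lemma~\ref{tracelemma}.

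For \eqref{final} you take a different, simpler route. The paper integrates $K_4$ by parts over $\Oe$, moving the normal derivative off $\vphi$. This produces two kinds of terms:
\begin{itemize}
\item a volume term containing tangential derivatives of $(\bI-\bP)L_{|\Sigma}\bP\nabla_\Sigma\eta$, which is where $L\in C^1_b$, the $W^{1,\infty}$ norm and $\eta\in H^2(\Sigma)$ come from;
\item boundary terms on $\Sigma_{\mathrm p}$ and $\Sigma$, handled through $\vphi^\sharp(y,\eps)-\vphi(y)=\int_0^\eps\partial_r\vphi^\sharp\,\d r$ and Lemma~\ref{tracelemma1}.
\end{itemize}
Every resulting term is still only $O(\eps^{1/2})$. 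In particular, the $\Sigma_{\mathrm p}$--$\Sigma$ difference term is bounded by a quantity of exactly the same form as your direct estimate. So the integration by parts gains nothing in order.

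Your bound via \eqref{ctrl1} gives
\[
|R|\le C\eps^{\frac12}\norm{\bP L_{|\Sigma}\bn_\Sigma}_{L^\infty(\Sigma)}\norm{\nabla\vphi}_{L^2(\Oe)}\norm{\nabla_\Sigma\eta}_{L^2(\Sigma)}.
\]
By symmetry of $L$, $\bn_\Sigma\cdot L\bP\xi=\bP L\bn_\Sigma\cdot\xi$, so either of the two norms in \eqref{final} controls it. This estimate needs only $L\in C^{0,1}$, $\vphi\in H^1(\Oe)$ and $\eta\in H^1(\Sigma)$, and it already implies \eqref{final}. The hedging in your last paragraph is therefore unnecessary: neither the tangential integration by parts you sketch nor the paper's volume one is needed. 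Used in the estimate of $L_7$, your version would even remove the $W^{1,\infty}$ term from \eqref{C2}, leaving only $\norm{\bP\mathbf B_{|\Sigma}\bn_\Sigma}_{L^\infty(\Sigma)}$.
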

		
		\begin{proof}
			We split the difference into four parts as follows:
			\begin{align*}
				\int_{\Oe}L\nabla \vphi\cdot \nabla \widetilde\eta\dx-\eps\int_{\Sigma}L_{|\Sigma}\nabla_\Sigma \mM_\eps(\vphi)\cdot \nabla_\Sigma \eta\d \mathcal H^{N-1}=\sum_{i=1}^4K_i,    
			\end{align*}
			with 
			\begin{align*}
				&K_1:=\int_{\Oe}(L-\widetilde{L_{|\Sigma}})\nabla \varphi\cdot\nabla \widetilde \eta\dx,\\&
				K_2:=\int_{\Oe} \widetilde{L_{|\Sigma}}\nabla \varphi\cdot(\nabla\widetilde \eta-\widetilde{\nabla_\Sigma \eta})\dx,\\&
				K_3:=\int_{\Oe} \widetilde{L_{|\Sigma}}\widetilde{\bP}\nabla\vphi\cdot\widetilde{\nabla_\Sigma\eta}\dx-\eps\int_\Sigma {L_{|\Sigma}}\nabla_\Sigma\mM_\eps(\vphi)\cdot \nabla_\Sigma \eta\d \mathcal H^{N-1},\\&
				K_4:=\int_{\Oe} \widetilde{L_{|\Sigma}}(\bI-\widetilde{\bP})\nabla\vphi\cdot\widetilde{\nabla_\Sigma\eta}\dx=\eps\int_\Sigma \bP L_{|\Sigma} (\bI-{\bP})\mM_\eps(\nabla\vphi)\cdot{\nabla_\Sigma\eta}\d \mathcal H^{N-1}
				.
			\end{align*}
			
			Now, concerning $K_1$, we immediately infer, recalling the regularity $L\in C^{0,1}(\R^N)$ and that $\normmm{d}\leq C\eps$ in $\Omega_\eps$, with $d$ the signed distance function from $\Sigma$, 
			\begin{align*}
				&\normmm{K_1}\leq \norm{L-\widetilde{L_{|\Sigma} }}_{L^\infty(\Omega_\eps)}\norm{\nabla \varphi}_{L^2(\Omega_\eps)}\norm{\nabla \widetilde{\eta}}_{L^2(\Omega_\eps)}\leq C\eps\norm{L}_{W^{1,\infty}(\Oe)}\norm{\nabla \varphi}_{L^2(\Omega_\eps)}\norm{\nabla \widetilde{\eta}}_{L^2(\Omega_\eps)}\\&\leq C\eps^\frac32 \norm{L}_{C^{0,1}(\R^N)}\norm{\nabla \varphi}_{L^2(\Omega_\eps)}\norm{\nabla_\Sigma{\eta}}_{L^2(\Sigma)},
			\end{align*}
			where in the last inequality we made use of \eqref{constextgrad}.
			Then, about $K_2$, we have by \eqref{controlJ} and \eqref{controleta},
			\begin{align*} 
				\normmm{K_2}\leq \norm{L}_{C^0_b(\R^N)}\norm{\nabla \varphi}_{L^2(\Oe)}\norm{\nabla \widetilde\eta-\widetilde{\nabla_\Sigma \eta}}_{L^2(\Oe)}\leq C\norm{L}_{C^0_b(\R^N)}\eps^\frac32\norm{\nabla\vphi}_{L^2(\Oe)}\norm{\nabla_\Sigma\eta}_{L^2(\Sigma)}.
			\end{align*}
			Furthermore, observing that, by \eqref{basic} and since $L\widetilde \bP\nabla \varphi\cdot\widetilde{\nabla_\Sigma\eta}=\widetilde{\bP}L\widetilde \bP\nabla \varphi\cdot \widetilde{\nabla_\Sigma \eta}$ due to ${\bn_{\Sigma}}\cdot\nabla_\Sigma\eta=0$ on $\Sigma$, we have, by \eqref{controlJ} and \eqref{P2},
			\begin{align*}
				&\normmm{K_3}=\normmm{\eps\int_{\Sigma} {( \bP L_{|\Sigma})}\bigl(\mathcal M_\eps(\widetilde \bP\nabla\vphi)-\nabla_\Sigma\mathcal M_\eps(\vphi)\bigr)\cdot \nabla_\Sigma \eta\d \mathcal H^{N-1}}\\&\leq 
				\eps \norm{\bP L}_{C^0_b(\Sigma)}\norm{\mathcal M_\eps(\widetilde \bP\nabla\vphi)-\nabla_\Sigma\mathcal M_\eps(\vphi)}_{L^2(\Sigma)}\norm{\nabla_\Sigma \eta}_{L^2(\Sigma)}\\&\leq C\norm{L}_{C^0_b(\R^N)}\eps^\frac32\norm{\vphi}_{H^1(\Omega_\eps)}\norm{\nabla_\Sigma \eta}_{L^2(\Sigma)}.
			\end{align*}
			In conclusion, concerning $K_4$, we have, recalling the regularity of $L$ and using \eqref{trace},
			\begin{align*}
				& \normmm{K_4}=\normmm{\eps\int_\Sigma\mM_\eps(\nabla\vphi\cdot\widetilde{{\bn_{\Sigma}}})\bP L_{|\Sigma}{\bn_{\Sigma}}\cdot\nabla_\Sigma \eta\d\mathcal H^{N-1}}\\&
				=\normmm{\eps\int_\Sigma\bigl(\mM_\eps(\nabla\vphi\cdot\widetilde{{\bn_{\Sigma}}})-\partial_{\bn_{\Sigma}}\varphi\bigr)\bP L_{|\Sigma}{\bn_{\Sigma}}\cdot\nabla_\Sigma \eta\d\mathcal H^{N-1}+\eps\int_\Sigma\partial_{\bn_{\Sigma}}\varphi\,\bP L_{|\Sigma}{\bn_{\Sigma}}\cdot\nabla_\Sigma \eta\d\mathcal H^{N-1}}\\&
				\leq \eps\norm{\mM_\eps(\nabla\vphi\cdot\widetilde{{\bn_{\Sigma}}})-\partial_{\bn_{\Sigma}}\varphi}_{L^2(\Sigma)}\norm{\bP L_{|\Sigma}{\bn_{\Sigma}}}_{L^\infty(\Sigma)}\norm{\nabla_\Sigma\eta}_{L^2(\Sigma)}\\&\quad +\eps\norm{\partial_{\bn_{\Sigma}}\varphi}_{L^2(\Sigma)}\norm{L_{|\Sigma}{\bn_{\Sigma}}}_{L^\infty(\Sigma)}\norm{\nabla_\Sigma\eta}_{L^2(\Sigma)}\\&
				\leq C\eps^\frac32\norm{L}_{C^0_b(\R^N)}\norm{\nabla\varphi\cdot\widetilde{\bn_{\Sigma}}}_{H^1(\Omega_\eps)}\norm{\nabla_\Sigma\eta}_{L^2(\Sigma)}+C\eps\norm{L}_{C^0_b(\R^N)}\norm{\partial_{\bn_{\Sigma}}\vphi}_{L^2(\Sigma)}\norm{\nabla_\Sigma \eta}_{L^2(\Sigma)},
			\end{align*}
			concluding the proof of \eqref{controlb} by putting together the estimates for $K_1,K_2,K_3,K_4$. To prove \eqref{controlb2}, it is enough to observe that, if $L(\mathbf x) \bP(\mathbf x)=\bP(\mathbf x)L(\mathbf x)$ for any $\mathbf x\in \Sigma,$ then $K_4=0$, since $\bP L\bn_{\Sigma}=L\bP\mathbf n_\Sigma=\mathbf 0$.

			\rev{Finally, to prove \eqref{final}, we come back to the definition of $K_4$ and integrate by parts once more, to obtain
			\begin{align*}
				K_4&=-\int_{\Oe}\vphi \Div\bigl((\bI-\widetilde\bP)\widetilde{L_{|\Sigma}}\widetilde\bP\widetilde{\nabla_\Sigma\eta}\bigr)\dx+\int_{\Sigma_{\mathrm p}}\vphi(\bI-\widetilde\bP)\widetilde{L_{|\Sigma}}\widetilde{\nabla_\Sigma\eta}\cdot \widetilde \bn_\Sigma\d\mathcal H^{N-1}\\&\quad -\int_{\Sigma}\vphi(\bI-\bP){L_{|\Sigma}}{\nabla_\Sigma\eta}\cdot \bn_\Sigma\d\mathcal H^{N-1}\\&
				=-\int_{\Oe}\vphi \Div\bigl((\bI-\widetilde\bP)\widetilde{L_{|\Sigma}}\widetilde\bP\widetilde{\nabla_\Sigma\eta}\bigr)\dx+\int_{\Sigma}\bigl(\vphi^\sharp(y,\eps)-\vphi(y)\bigr)(\bI-\bP){L_{|\Sigma}}{\nabla_\Sigma\eta}\cdot \bn_\Sigma J(y,\eps)\d\mathcal H^{N-1}\\&\quad +\int_{\Sigma}\vphi(y)(\bI-\bP){L_{|\Sigma}}{\nabla_\Sigma\eta}\cdot \bn_\Sigma (J(y,\eps)-1)\d\mathcal H^{N-1}\\&
				=-\int_{\Oe}\vphi \Div\bigl((\bI-\widetilde\bP)\widetilde{L_{|\Sigma}}\widetilde\bP\widetilde{\nabla_\Sigma\eta}\bigr)\dx+\int_{\Sigma}\left(\int_0^\eps\partial_r\vphi^\sharp(y,r)\d r\right)(\bI-\bP){L_{|\Sigma}}{\nabla_\Sigma\eta}\cdot \bn_\Sigma J(y,\eps)\d\mathcal H^{N-1}\\&\quad +\int_{\Sigma}\vphi(y)(\bI-\bP){L_{|\Sigma}}{\nabla_\Sigma\eta}\cdot \bn_\Sigma (J(y,\eps)-1)\d\mathcal H^{N-1}.
			\end{align*}
			Now, recalling that, thanks to the regularity of $\Sigma$, $\nabla \widetilde f=(\bI-d\widetilde W)^{-1}\widetilde{\nabla_\Sigma f}$ in the tubular neighborhood, where $W$ is the Weingarten map \eqref{weingarten}, so that
			\begin{align*}
				\normmm{\nabla\widetilde f}\leq C\normmm{\widetilde {\nabla_\Sigma f}},
			\end{align*}
			we get, after an application of \eqref{basic}, the Cauchy--Schwarz inequality, \eqref{ctrl1}, and recalling that $ L\in C^1_b(\R^N;\R^{N\times N})$,
			\begin{align*}
				&\normmm{\int_{\Oe}\vphi \Div\bigl((\bI-\widetilde\bP)\widetilde{L_{|\Sigma}}\widetilde\bP\widetilde{\nabla_\Sigma\eta}\bigr)\dx}\\&\leq \normmm{\int_{\Oe}{\vphi}(\bI-d\widetilde W)^{-1}:\widetilde{\nabla_\Sigma}\bigl((\bI-\bP){L_{|\Sigma}}\widetilde\bP\bigr)\cdot{\widetilde{\nabla_\Sigma\eta}}\dx}+\normmm{\int_{\Oe}\vphi (\bI-\widetilde\bP)\widetilde{L_{|\Sigma}}\widetilde\bP:(\bI-d \widetilde W)^{-1}\widetilde{\nabla_\Sigma}({\nabla_\Sigma \eta})\dx}\\&
				=\eps\normmm{\int_{\Sigma}\mM_\eps({\vphi})(\bI-d W)^{-1}:\nabla_\Sigma\bigl((\bI-\bP){L_{|\Sigma}\bP}\bigr)\cdot{{\nabla_\Sigma\eta}}\d\mathcal H^{N-1}}\\&\quad +\eps\normmm{\int_{\Sigma}\mM_\eps(\vphi) (\bI-\bP){L_{|\Sigma}}\bP:(\bI-d  W)^{-1}\nabla_\Sigma({\nabla_\Sigma \eta})\d\mathcal H^{N-1}}\\&
				\leq C\eps\norm{(\bI-\bP)L_{|\Sigma}\bP}_{W^{1,\infty}(\Sigma)}\norm{\mM_\eps(\vphi)}_{L^2(\Sigma)}\norm{\eta}_{H^2(\Sigma)}\\&\leq C\eps^{\frac12}\norm{\bn_\Sigma L_{|\Sigma}\bP}_{W^{1,\infty}(\Sigma)}\norm{\vphi}_{H^1(\Oe)}\norm{\eta}_{H^2(\Sigma)},
			\end{align*}
			where in the last inequality we used Lemma \ref{tracelemma1} and the trace embedding $H^1(\Oe)\hookrightarrow L^{2}(\Sigma)$. Concerning the other terms, by the Cauchy--Schwarz inequality we get
			\begin{align*}
				&\normmm{\int_{\Sigma}\left(\int_0^\eps\partial_r\vphi^\sharp(y,r)\d r\right)(\bI-\bP){L_{|\Sigma}}{\nabla_\Sigma\eta}\cdot \bn_\Sigma J(y,\eps)\d\mathcal H^{N-1}}\\&
				\leq C\eps^\frac12\left(\int_\Sigma\int_0^\eps\normmm{\partial_r\vphi^\sharp(y,r)}^2\d r \right)^\onehalf\norm{\bP L_{|\Sigma}(\bI-\bP)\bn_\Sigma}_{L^\infty(\Sigma)}\norm{\eta}_{H^1(\Sigma)}\\&\leq C\eps^\onehalf\norm{\vphi}_{H^1(\Oe)}\norm{\bP L_{|\Sigma}\bn_\Sigma}_{L^{\infty}(\Sigma)}\norm{\eta}_{H^1(\Sigma)},
			\end{align*}
			where we also used \eqref{Lp} thanks to $\partial_r \vphi^\sharp(y,s)=\bigl((\nabla_{\bn_\Sigma} \vphi)^\sharp\bigr)(y,s)$. Finally, using \eqref{controlJ2} we deduce
			\begin{align*}
				&\normmm{\int_{\Sigma}\vphi(y)(\bI-\bP){L_{|\Sigma}}{\nabla_\Sigma\eta}\cdot \bn_\Sigma (J(y,\eps)-1)\d\mathcal H^{N-1}}\\&\leq C\eps\norm{\mathrm{Tr}_\Sigma\vphi}_{L^2(\Sigma)}\norm{\eta}_{H^1(\Sigma)}\\&\leq C\eps^\onehalf\norm{\bn_\Sigma L_{|\Sigma}\bP}_{L^\infty(\Sigma)}\norm{\vphi}_{H^1(\Oe)}\norm{\eta}_{H^1(\Sigma)},
			\end{align*}
			by means of Lemma \ref{tracelemma1}. This, together with the estimates above, concludes the proof of \eqref{final}.}
		\end{proof}
		
		Finally, the time derivative of the averaging operator is addressed through the following result. 
		
		\begin{lemma}
			\label{timederivative}
			Let $T>0$, $a\in C^{0,1}(\R^N\times[0,T])$, \rev{$a \geq 0$},  and assume $\vphi\in H^1(0,T;L^2(\Omega_\eps))$. Then $\mM_\eps(\vphi)\in H^1(0,T;L^2(\Sigma))$ and it holds 
            \begin{align}
				& \nonumber\normmm{\int_0^T\int_{\Oe}a\partial_t \vphi \widetilde\zeta\dx\dt-\eps\int_0^T\int_\Sigma \mM_\eps(a)\partial_t\mM_\eps(\vphi)\zeta\d\mathcal H^{N-1}\dt}\\&\leq C\norm{a}_{C^{0,1}(\R^N\times[0,T])}\eps^\frac32\norm{\varphi}_{H^1(0,T;L^2(\Oe))}\norm{\zeta}_{L^2(0,T;L^2(\Sigma))},\quad \forall \zeta\in L^2(0,T;L^2(\Sigma)),
				\label{timecontrol}
			\end{align}
			for some $C>0$.
		\end{lemma}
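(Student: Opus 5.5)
The plan is to reduce the statement to a pointwise-in-time comparison on $\Sigma$, using the identity \eqref{basic} and the fact that the Jacobian $J$ does not depend on $t$.

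First, the regularity claim. By \eqref{ctrl1} with $p=2$, $\mM_\eps$ is a bounded linear operator from $L^2(\Oe)$ to $L^2(\Sigma)$, with norm at most $C\eps^{-\frac12}$. Since $J=J(y,r)$ is time-independent, $\mM_\eps$ commutes with distributional time derivatives: for $\psi\in C_c^\infty(0,T)$ one has $\int_0^T\mM_\eps(\vphi)\psi'\dt=\mM_\eps\bigl(\int_0^T\vphi\psi'\dt\bigr)=-\mM_\eps\bigl(\int_0^T\partial_t\vphi\,\psi\dt\bigr)$. Hence $\partial_t\mM_\eps(\vphi)=\mM_\eps(\partial_t\vphi)$ and $\mM_\eps(\vphi)\in H^1(0,T;L^2(\Sigma))$.

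Next, fix a.e.\ $t$ and apply \eqref{basic} with the function $a\,\partial_t\vphi$. This gives $\int_{\Oe}a\partial_t\vphi\,\widetilde\zeta\dx=\eps\int_\Sigma\mM_\eps(a\partial_t\vphi)\zeta\d\mathcal H^{N-1}$. The task is then to estimate the commutator-type quantity $R:=\mM_\eps(a\partial_t\vphi)-\mM_\eps(a)\mM_\eps(\partial_t\vphi)$ on $\Sigma$. Write $a=\widetilde{a_{|\Sigma}}+(a-\widetilde{a_{|\Sigma}})$. On $\Oe$ we have $|a-\widetilde{a_{|\Sigma}}|\le |d|\,\norm{a}_{C^{0,1}}\le C\eps\norm{a}_{C^{0,1}}$. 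The constant-in-$r$ part factors out of the average, giving $\mM_\eps(\widetilde{a_{|\Sigma}}\partial_t\vphi)=a_{|\Sigma}\mM_\eps(\partial_t\vphi)$. Also $\mM_\eps(\widetilde{a_{|\Sigma}})=a_{|\Sigma}\,\frac1\eps\int_0^\eps J\d r=a_{|\Sigma}(1+O(\eps))$ by \eqref{controlJ2}. Combining these,
\begin{align*}
|R|\le C\eps\norm{a}_{C^{0,1}}\bigl(\mM_\eps(|\partial_t\vphi|)+|\mM_\eps(\partial_t\vphi)|\bigr)\quad\text{on }\Sigma,
\end{align*}
where the $O(\eps)$ factor from $J$ is multiplied by $\norm{a}_{C^0_b}\le\norm{a}_{C^{0,1}}$. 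Taking the $L^2(\Sigma)$ norm and using \eqref{ctrl1} yields $\norm{R}_{L^2(\Sigma)}\le C\eps^{\frac12}\norm{a}_{C^{0,1}}\norm{\partial_t\vphi}_{L^2(\Oe)}$.

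Finally, multiply by $\eps$, apply Cauchy--Schwarz on $\Sigma$ against $\zeta(t)$, integrate over $(0,T)$, and apply Cauchy--Schwarz in time. This gives the bound $C\norm{a}_{C^{0,1}}\eps^{\frac32}\norm{\partial_t\vphi}_{L^2(0,T;L^2(\Oe))}\norm{\zeta}_{L^2(0,T;L^2(\Sigma))}$, which implies \eqref{timecontrol}. The hypothesis $a\ge0$ is not needed for the estimate.

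The step I expect to need the most care is the pointwise bound on $R$. One must keep track of where the Lipschitz constant of $a$ is used and where only its sup norm enters, through the Jacobian correction $\frac1\eps\int_0^\eps J\d r-1=O(\eps)$. The $t$-dependence of $a$ causes no trouble, because all the averaging is in space at fixed time.
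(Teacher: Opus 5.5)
Your proof is correct. It rests on the same mechanism as the paper's: the identity \eqref{basic} is exact for coefficients that are constant along the normal direction, and a Lipschitz $a$ differs from such a coefficient by $O(\eps)$ in $L^\infty(\Oe)$. The decomposition, however, is different.

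\textbf{The paper's route.} The paper compares $a$ with $\widetilde{\mM_\eps(a)}$, the constant extension of its own average. It bounds $\norm{a-\widetilde{\mM_\eps(a)}}_{L^\infty(\Oe)}\le C\eps\norm{a}_{W^{1,\infty}(\Oe)}$ by \eqref{ctrl2} with $p=\infty$. The error is then estimated directly as a volume integral over $\Oe$, and the extra factor $\eps^{1/2}$ comes from $\norm{\widetilde\zeta}_{L^2(\Oe)}\le C\eps^{1/2}\norm{\zeta}_{L^2(\Sigma)}$ in \eqref{constext}. With this reference function the remaining term is \emph{exactly} zero, since $\mM_\eps(\widetilde{\mM_\eps(a)}\,\partial_t\vphi)=\mM_\eps(a)\,\mM_\eps(\partial_t\vphi)$. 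So no Jacobian bookkeeping appears.

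\textbf{Your route.} You compare with the trace extension $\widetilde{a_{|\Sigma}}$ and estimate a commutator pointwise on $\Sigma$. This costs you the additional term $a_{|\Sigma}\bigl(1-\frac1\eps\int_0^\eps J\,\d r\bigr)\mM_\eps(\partial_t\vphi)$, which you correctly control with \eqref{controlJ2}. Your $\eps^{3/2}$ then arises as $\eps\cdot\eps\cdot\eps^{-1/2}$ via \eqref{ctrl1}.

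\textbf{What your version buys.} It is more elementary: it uses only the bound $|a(y+r\bn_\Sigma(y))-a(y)|\le r\,\mathrm{Lip}(a)$ rather than the cited estimate \eqref{ctrl2}, and it gives a pointwise-in-$(y,t)$ control of the commutator. You also obtain $\partial_t\mM_\eps(\vphi)=\mM_\eps(\partial_t\vphi)$ cleanly from the boundedness of $\mM_\eps$ on $L^2$. Combined with \eqref{basic} at fixed time, this bypasses the paper's integration by parts in time and density argument leading to \eqref{ident2}. As you note, neither argument uses $a\ge 0$.
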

		
		\begin{proof}
			The proof is similar to the one of \cite[Lemma 5.9]{Miura2}. First, we note that, for any $\zeta\in C^\infty((0,T);L^2(\Sigma))$ with compact support in $(0,T)$, we have, after an integration by parts in time and using \eqref{basic},
			\begin{align}
				&\int_0^T\int_{\Oe} \widetilde{\mM_\eps(a)}\partial_t\vphi\widetilde\zeta\dx\dt =-\nonumber\int_0^T\int_{\Oe}  \vphi\,{{\partial_t(\widetilde{\mM_\eps(a)\zeta})}\dx}\dt\\&
				=-\eps\int_0^T\int_\Sigma \mM_\eps(\vphi)\partial_t({\mM_\eps(a)}\zeta)\d\mathcal H^{N-1}\dt\label{ident}.
			\end{align}
			Also, by means of \eqref{ctrl1}, we can write
			\begin{align*}
				\norm{\partial_t\mM_\eps(\vphi)}_{L^2(\Sigma)}=\norm{\mM_\eps(\partial_t\vphi)}_{L^2(\Sigma)}\leq C\eps^{-\frac12}\norm{\partial_t\vphi}_{L^2(\Omega_\eps)},
			\end{align*}
			which entails that $\partial_t\mM_\eps(\vphi)\in L^2(0,T;L^2(\Sigma))$, as well as that the identity \eqref{ident} can be written as 
			\begin{align}
				&\int_0^T\int_{\Oe} \widetilde{\mM_\eps(a)}\partial_t\vphi\widetilde\zeta\dx\dt 
				=\eps\int_0^T\int_\Sigma {\mM_\eps(a)}\partial_t\mM_\eps(\vphi)\zeta\d\mathcal H^{N-1}\dt\label{ident2}.
			\end{align}
			\mich{Since both sides of \eqref{ident2} are continuous with respect to $\zeta$ in $L^2(0,T;L^2(\Sigma))$, the identity extends by density to every $\zeta\in L^2(0,T;L^2(\Sigma))$.}
			We can now continue the proof, by splitting the terms in \eqref{timecontrol} as follows
			\begin{align}
				\int_0^T\int_{\Oe}a\partial_t \vphi \widetilde\zeta\dx\dt-\eps\int_0^T\int_\Sigma \mM_\eps(a)\partial_t\mM_\eps(\vphi)\zeta\d\mathcal H^{N-1}\dt=J_1+J_2,
				\label{timecontrol1}
			\end{align}
			where 
			\begin{align*}
				J_1:=\int_0^T\int_{\Oe}a\partial_t \vphi \widetilde\zeta\dx\dt-\int_0^T\int_{\Oe}\widetilde{\mM_\eps(a)}\partial_t \vphi \widetilde\zeta\dx\dt,
			\end{align*}
			and 
			\begin{align*}
				J_2=\int_0^T\int_{\Oe}\widetilde{\mM_\eps(a)}\partial_t \vphi \widetilde\zeta\dx\dt-\eps\int_0^T\int_\Sigma \mM_\eps(a)\partial_t\mM_\eps(\vphi)\zeta\d\mathcal H^{N-1}\dt=0,
			\end{align*}
			by \eqref{ident2}. 
			Therefore, we only need to estimate $J_1$, which can be done by the Cauchy-Schwarz inequality using \eqref{ctrl2} with $p=\infty$:
			\begin{align*}
				&\normmm{J_1}\leq \norm{a-\widetilde{\mM_\eps(a)}}_{L^\infty(0,T;L^\infty(\Oe))}\norm{\partial_t\varphi}_{L^2(0,T;L^2(\Oe))}\norm{\widetilde\zeta}_{L^2(0,T;L^2(\Oe))}\\&
				\leq C\norm{a}_{C^{0,1}(\R^N\times[0,T])}\eps^\frac32\norm{\varphi}_{H^1(0,T;L^2(\Oe))}\norm{\zeta}_{L^2(0,T;L^2(\Sigma))},
			\end{align*}
			where in the last estimate we also used \eqref{constext}. This concludes the proof of \eqref{timecontrol}.
		\end{proof}
        
        		\subsection{Main Theoretical Results and Proofs}
		\label{mat_sec4}
        
		\mich{In the present section we establish the main theoretical results concerning the rescaled film-bulk thermal transmission problem, for the finite-thickness film configuration and for the corresponding thin-film limit. In order to obtain the higher-order estimates in a simpler way, from now on we set $c=a$, that is, the two regions are assumed to share the same volumetric heat capacity; the general case $c\neq a$ can be treated along the same lines, at the price of some additional technical overhead. Accordingly, in the geometric setting of Section~\ref{mat_sec1}, in which $\Sigma_v$ and $\Sigma_{\mathrm q}$ are empty, the rescaled problem \eqref{eps1} reads}
			\begin{align}
				\label{eps1b}
				\begin{cases}
					a\,\partial_t u - \Div\!\big( \mathbf B\nabla u\big) =  g
					& \text{in } \Omega_\varepsilon,\\[4pt]
					a\,\partial_t v - \Div\!\big( \mathbf D\nabla v\big) = h
					& \text{in } \Omega_b,\\[4pt]
					u = v
					& \text{on } \Sigma,\\[4pt]
					\mathbf  B\nabla u \cdot \mathbf n_\Sigma
					= \varepsilon\, \mathbf D\nabla v \cdot \mathbf n_\Sigma
					& \text{on } \Sigma,\\[4pt]
					\mathbf  B\nabla u \cdot \mathbf n_{\mathrm p} = \varepsilon\,\op
					& \text{on } \Sigma_{\mathrm p},
				\end{cases}
			\end{align}
			\mich{with the initial conditions
			\[
			u(0) = u_0 \quad \text{in } \Omega_\varepsilon,
			\qquad
			v(0) = v_0 \quad \text{in } \Omega_b,
			\qquad
			u_0 = v_0 \quad \text{on } \Sigma.
			\]}
			As $\varepsilon \to 0$, problem \eqref{eps1b} formally reduces to
			\begin{align}
				\label{thinfilm11b}
				\begin{cases}
					a\,\partial_t v - \Div\!\big( \mathbf D\nabla v\big) = h
					& \text{in } \Omega_b,\\[3pt]
					u = v
					& \text{on } \Sigma,\\[3pt]
					a\,\partial_t u
					- \Div_\Sigma\!\big(\bP\mathbf  B_{|\Sigma}\bP\nabla_\Sigma u\big)
					= -\,\mathbf  D\nabla v \cdot \mathbf n_\Sigma
					+ g + \op
					& \text{on } \Sigma,
				\end{cases}
			\end{align}
			with
			\[
			u_0 = v_0 \quad \text{on } \Sigma,
			\qquad
			u(0) = u_0 \quad \text{on } \Sigma,
			\qquad
			v(0) = v_0 \quad \text{in } \Omega_b.
			\]
			\mich{Throughout this section, $\Omega_\delta$ denotes the film region \eqref{thindomain} corresponding to the maximal thickness $\delta$ fixed in Section~\ref{mat_sec1}, so that $\Omega_\eps\subset\Omega_\delta$ for every $\eps\in(0,\delta)$, and $\overline\Omega_\delta$ denotes its closure.}
		
		We first establish existence, uniqueness and regularity results
		for $\varepsilon>0$, together with \textit{a priori} estimates that make the
		dependence on $\varepsilon$ explicit.
		
		\begin{theorem}[Well-posedness for $\eps\in(0,\delta)$]
			\label{thmnotthin}
			Let $a\in C^1_b(\R^N\times [0,\infty))$ and $\mathbf B,\mathbf D\in C^1_b(\R^N\times[0,\infty);\R^{N\times N}_{sym})$, and assume that there exists $c_0>0$ such that
			\[
			c_0\leq a(\mathbf x,t),
			\qquad
			c_0\normmm{\xi}^2\leq \xi\cdot \mathbf B(\mathbf x,t)\,\xi,
			\qquad
			c_0\normmm{\xi}^2\leq \xi\cdot \mathbf D(\mathbf x,t)\,\xi,
			\]
			for every $\xi\in \R^N$ and every $(\mathbf x,t)\in\R^N\times[0,\infty)$. Assume also that $(u_0,v_0)\in L^2 (\Omega_\eps)\times L^2(\Omega_b)$, $g\in L^2(\Omega_\eps)$, $h\in L^2(\Omega_b)$ and $\op\in L^2(\Sigma_{\mathrm p})$. Then there exists a unique globally defined weak solution $(u,v)$ to \eqref{eps1b}, satisfying $\mathrm{Tr}_{\Sigma}u=\mathrm{Tr}_{\Sigma} v$ almost everywhere on $\Sigma$, such that, for any $T>0$, 
			\begin{align}
				&\nonumber\norm{u}_{L^\infty(0,T;L^2(\Omega_\eps))}+\sqrt\eps \norm{v}_{L^\infty(0,T;L^2(\Omega_b))}+\norm{u}_{L^2(0,T;H^1(\Omega_\eps))}+\sqrt\eps \norm{v}_{L^2(0,T;H^1(\Omega_b))}\\&
				+\norm{\partial_t u}_{L^2(0,T;H^1(\Omega_\eps)')}+\sqrt\eps \norm{\partial_t v}_{L^2(0,T;H^1(\Omega_b)')}\nonumber\\&\leq C(T)(\sqrt\eps\norm{v_0}_{L^2(\Omega_b)}+\norm{u_0}_{L^2(\Omega_\eps)})+C(T)(\norm{g}_{L^2(\Omega_\eps)}+\sqrt\eps\norm{h}_{L^2(\Omega_b)}+\sqrt\eps\norm{\op}_{L^2(\Sigma_{\mathrm p})}),\label{est1}
			\end{align}
			and such that, for almost any $t>0$,
			\begin{align}
				\nonumber&\langle\partial_t u,  a(\mathbf x,t)\zeta\rangle_{H^1(\Omega_\eps)',H^1(\Omega_\eps)}+\eps\langle\partial_t v,  a(\mathbf x,t)\zeta\rangle_{H^1(\Omega_b)',H^1(\Omega_b)}\\&\quad +\int_{\Omega_\eps} \mathbf B(\mathbf x,t)\nabla u\cdot \nabla \zeta\dx+\eps\int_{\Omega_b} \mathbf D(\mathbf x,t)\nabla v\cdot \nabla \zeta\dx\nonumber\\&=\int_{\Omega_\eps}g\zeta\dx+\eps \int_{\Omega_b}h\zeta\dx +\eps\int_{\Sigma_{\mathrm p}}\op\,\zeta\d\mathcal H^{N-1},\label{weakform}
			\end{align}
            for any $\zeta\in H^1(\Omega)$. %
			
			If additionally $(u_0,v_0)\in H^1(\Oe)\times H^1(\Omega_b)$, with $\mathrm{Tr}_{\Sigma} u_0=\mathrm{Tr}_{\Sigma}v_0$ a.e. on $\Sigma$, then the global solution satisfies 
			\begin{align}
				&\nonumber\norm{u}_{L^\infty(0,T;H^1(\Omega_\eps))}+\sqrt\eps \norm{v}_{L^\infty(0,T;H^1(\Omega_b))}+\norm{\Div(\mathbf B\nabla u)}_{L^2(0,T;L^2(\Omega_\eps))}+\sqrt\eps \norm{\Div(\mathbf D\nabla v)}_{L^2(0,T;L^2(\Omega_b))}\\&
				+\norm{\partial_t u}_{L^2(0,T;L^2(\Omega_\eps))}+\sqrt\eps \norm{\partial_tv}_{L^2(0,T;L^2(\Omega_b))}\nonumber\\&\leq C(T)(\sqrt\eps(1+\norm{\op}_{L^2(\Sigma_p)})+\sqrt\eps\norm{v_0}_{H^1(\Omega_b)}+\norm{u_0}_{H^1(\Omega_\eps)})\nonumber\\&\quad +C(T)(\norm{g}_{L^2(\Omega_\eps)}+\sqrt\eps\norm{h}_{L^2(\Omega_b)}+\sqrt\eps\norm{\op}_{L^2(\Sigma_{\mathrm p})}),\label{est2}
			\end{align}
			for any $T>0$.
		\end{theorem}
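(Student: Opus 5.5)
The plan is to treat \eqref{eps1b} as a single parabolic problem on the whole domain $\Omega=\Omega_b\cup\Sigma\cup\Omega_\eps$ with piecewise-defined, $\eps$-weighted coefficients. Set $w=u$ in $\Omega_\eps$, $w=v$ in $\Omega_b$, and define the weight $\rho_\eps=1$ in $\Omega_\eps$, $\rho_\eps=\eps$ in $\Omega_b$. The continuity condition $u=v$ on $\Sigma$ becomes $w\in H^1(\Omega)$. The flux condition $\mathbf B\nabla u\cdot\bn_\Sigma=\eps\mathbf D\nabla v\cdot\bn_\Sigma$ is the natural transmission condition for the operator $-\Div(\mathbf A_\eps\nabla w)$, where $\mathbf A_\eps=\mathbf B$ in $\Omega_\eps$ and $\mathbf A_\eps=\eps\mathbf D$ in $\Omega_b$. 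Hence \eqref{weakform} is exactly the weak formulation of
\[
\rho_\eps a\,\partial_t w-\Div(\mathbf A_\eps\nabla w)=\rho_\eps f
\]
with Neumann datum $\eps\op$ on $\Sigma_{\mathrm p}=\partial\Omega$. Here $f=g$ in $\Omega_\eps$ and $f=h$ in $\Omega_b$. For fixed $\eps$ the coefficients are bounded and uniformly elliptic, and $\rho_\eps a\ge \eps c_0$. Existence and uniqueness therefore follow from a Faedo--Galerkin scheme in $H^1(\Omega)$, or from J.-L.~Lions' theorem, on the Gelfand triple $H^1(\Omega)\subset L^2(\Omega;\rho_\eps a\,\d x)\subset H^1(\Omega)'$. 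Time-dependence of $a$ is handled by working with $\langle\partial_t w,a\zeta\rangle$. Uniqueness follows from linearity and the energy estimate below applied to the difference of two solutions. The trace identity $\mathrm{Tr}_\Sigma u=\mathrm{Tr}_\Sigma v$ is automatic from $w\in H^1(\Omega)$.

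\textbf{Basic energy estimate \eqref{est1}.} I would test \eqref{weakform} with $\zeta=w$ in the Galerkin scheme, using
\[
\langle\partial_t w,aw\rangle=\tfrac12\tfrac{\d}{\d t}\int a w^2-\tfrac12\int \partial_t a\,w^2 .
\]
This gives
\[
\tfrac12\tfrac{\d}{\d t}\Bigl(\int_{\Oe}au^2+\eps\int_{\Omega_b}av^2\Bigr)+c_0\Bigl(\norm{\nabla u}^2_{L^2(\Oe)}+\eps\norm{\nabla v}^2_{L^2(\Omega_b)}\Bigr)\le C\Bigl(\int_{\Oe}u^2+\eps\int_{\Omega_b}v^2\Bigr)+\text{RHS}.
\]
The source terms are handled by Cauchy--Schwarz. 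The $\eps\op$ boundary term is the only delicate one. I would bound it with \eqref{ctrltrace2}:
\[
\eps\norm{\op}_{L^2(\Sigma_{\mathrm p})}\norm{u}_{L^2(\Sigma_{\mathrm p})}\le C\eps\norm{\op}\bigl(\eps^{1/2}\norm{u}_{H^1(\Oe)}+\eps^{-1/2}\norm{u}_{L^2(\Oe)}\bigr),
\]
and then absorb using Young's inequality. This produces the $\sqrt\eps\norm{\op}$ contribution. Gronwall's lemma then yields the $L^\infty L^2$ and $L^2H^1$ bounds. The dual-norm bounds on $\partial_t u$ and $\partial_t v$ follow by comparison in \eqref{weakform}, choosing $\zeta$ supported in each subregion; equivalently, one can bound $\langle\partial_t w,a\zeta\rangle$ for general $\zeta$. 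Sources constant in time are fine; I read $\norm{g}$ as the relevant $L^2$-in-time norm.

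\textbf{Higher regularity \eqref{est2}.} Here I would test with $\zeta=\partial_t w$ at the Galerkin level, which is legitimate since $\partial_t w_n$ lies in the Galerkin space. This gives
\[
\int\rho_\eps a|\partial_tw|^2+\tfrac12\tfrac{\d}{\d t}\int\mathbf A_\eps\nabla w\cdot\nabla w=\tfrac12\int\partial_t\mathbf A_\eps\nabla w\cdot\nabla w+\int\rho_\eps f\,\partial_t w+\eps\int_{\Sigma_{\mathrm p}}\op\,\partial_t u .
\]
The gradient term on the right is controlled via $C^1_b$ of $\mathbf B,\mathbf D$ and Gronwall. The boundary term is the main obstacle, since the trace of $\partial_t u$ is not controlled. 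I would first rewrite it as
\[
\eps\tfrac{\d}{\d t}\int_{\Sigma_{\mathrm p}}\op\,u ,
\]
which requires $\op$ to be time independent; this is what the stated assumption $\op\in L^2(\Sigma_{\mathrm p})$ suggests. After integrating in time, it is estimated via \eqref{ctrltrace2} by $\eps^{3/2}\norm{\op}\norm{u}_{H^1(\Oe)}+\eps^{1/2}\norm{\op}\norm{u}_{L^2(\Oe)}$. The first piece is absorbed by Young's inequality. The second uses \eqref{est1}, and this explains the factor $\sqrt\eps(1+\norm{\op})$ in \eqref{est2}. The initial energy is bounded by $\norm{u_0}^2_{H^1}+\eps\norm{v_0}^2_{H^1}$, and this is where the compatibility $\mathrm{Tr}_\Sigma u_0=\mathrm{Tr}_\Sigma v_0$ is needed, so that $w_0\in H^1(\Omega)$.

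Finally, the bounds on $\Div(\mathbf B\nabla u)$ and $\Div(\mathbf D\nabla v)$ in $L^2L^2$ come by comparison in the strong equations: once $\partial_t u,\partial_t v\in L^2L^2$, each region's equation holds in $L^2$. Throughout, I would track the weight $\sqrt\eps$ on every bulk quantity, which arises naturally because the bulk part of the energy carries the factor $\eps$.
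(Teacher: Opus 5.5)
Your proposal is correct. For existence, uniqueness and \eqref{est1} it follows the paper's argument:
\begin{itemize}
\item the same test $\zeta=u$ in $\Oe$, $\zeta=v$ in $\Omega_b$;
\item the same treatment of $\eps\int_{\Sigma_{\mathrm p}}\op\,u$ through \eqref{ctrltrace2} and Young's inequality;
\item Gronwall, and comparison for the time derivatives.
\end{itemize}
Your reformulation with $\rho_\eps$ and $\mathbf A_\eps$ is just a compact rewriting of \eqref{weakform}, which is already posed on $H^1(\Omega)$.

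The routes differ for \eqref{est2}. The paper tests formally with $\zeta=-\Div(\mathbf B\nabla u)$ in $\Oe$ and $\zeta=-\Div(\mathbf D\nabla v)$ in $\Omega_b$. It integrates by parts in each region, and the interface terms cancel thanks to $u=v$ and $\mathbf B\nabla u\cdot\bn_\Sigma=\eps\mathbf D\nabla v\cdot\bn_\Sigma$. This gives \eqref{A0}, where the divergences appear as dissipation. However, the term $\int_{\Oe}\nabla a\cdot\mathbf B\nabla u\,\partial_t u$ leaves $\omega\norm{\partial_t u}^2_{L^2(\Oe)}$ on the right, so \eqref{A0} must be summed with the $\partial_t$-test \eqref{A2} to close.

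You use only the $\partial_t w$-test and read off $\Div(\mathbf B\nabla u)=a\partial_t u-g$ and $\Div(\mathbf D\nabla v)=a\partial_t v-h$ by comparison. Since \eqref{A2} already closes on its own, your route is shorter. It is also the one that is rigorous at the Galerkin level: $\partial_t w_n$ belongs to the Galerkin space, whereas $-\Div(\mathbf A_\eps\nabla w_n)$ is not an admissible test function, so the paper's computation is only formal. The paper's extra test gives the divergences directly as dissipation, which is the usual starting point for elliptic $H^2$ bounds. As Remark~\ref{rem0} explains, those bounds are not available for the transmission problem anyway.

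The $\Sigma_{\mathrm p}$-term is handled identically in both: $\op$ is time independent, the term is rewritten as $\eps\frac{\d}{\d t}\int_{\Sigma_{\mathrm p}}\op\,u$, and it is then controlled by \eqref{ctrltrace2} together with \eqref{est1}.

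One step should be stated more carefully, although the paper is equally brief there. Testing with $\zeta$ supported in a single subregion only controls $\partial_t u$ on functions of $H^1(\Oe)$ vanishing on $\Sigma$, and $\partial_t v$ on $H^1_0(\Omega_b)$. A bound on the single functional $\langle\partial_t u,a\,\cdot\rangle+\eps\langle\partial_t v,a\,\cdot\rangle$ on $H^1(\Omega)$ is not equivalent to separate bounds in $H^1(\Oe)'$ and $H^1(\Omega_b)'$, so your ``equivalently'' is not right. What separates them is the interface flux, which for $L^2$ data need not lie in $L^2(0,T;H^{-1/2}(\Sigma))$. The two pairings in \eqref{weakform} should therefore be understood through a splitting of that functional. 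One way is to compose it with the orthogonal projection, for the inner product $(\cdot,\cdot)_{H^1(\Oe)}+\eps(\cdot,\cdot)_{H^1(\Omega_b)}$, of $H^1(\Oe)\times H^1(\Omega_b)$ onto the closed subspace of pairs with equal traces on $\Sigma$. This yields exactly the dual bounds of \eqref{est1}, with the $\sqrt\eps$ weight on the bulk part.
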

		\mich{\begin{remark}
			\label{rem0}
			We point out that, in general, we cannot infer the full-norm control $(u,v)\in L^2(0,T;H^2(\Omega_\eps)\times H^2(\Omega_b))$, since the boundary regularity depends on $\norm{\op}_{H^\frac12({\Sigma_{\mathrm p}})}$ and, more critically, on the $H^\frac12(\Sigma)$-norm of the interface flux $\mathbf B\nabla u\cdot\bn_\Sigma=\eps\, \mathbf D\nabla v\cdot\bn_\Sigma$, for which no uniform control is available. The issue is genuinely nontrivial for transmission problems, although it can be settled in some cases (see, e.g., \cite{Caffarelli}). Nevertheless, we will not make use of this additional regularity when studying the convergence rates as $\varepsilon\to0$.
		\end{remark}}
		
		An analogous well-posedness result holds for the thin-film limit problem \eqref{thinfilm11b}.
		\begin{theorem}[Well-posedness for the thin-film limit problem]
			\label{thmthin}
			Let  $a\in C^1_b(\R^N\times [0,\infty))$ and $\mathbf B,\mathbf D\in C^1_b(\R^N\times[0,\infty);\R^{N\times N}_{sym})$, and assume that there exists $c_0>0$ such that
			\(
			0<c_0\leq a({\mich{\mathbf x,t}}),
            \)
            as well as \(c_0\normmm{\xi}^2\leq \xi\cdot \mathbf B(\mathbf x,t)\,\xi,
            \) and \(c_0\normmm{\xi}^2\leq \xi\cdot \mathbf D(\mathbf x,t)\,\xi,
			\)
			for every $\xi\in \R^N$ and every $(\mathbf x,t)\in\R^N\times[0,\infty)$. \mich{Assume also that $v_0\in L^2(\Omega_b)$, $u_0\in L^2(\Sigma)$, $h\in L^2(\Omega_b)$ and $g,\op\in L^2(\Sigma)$, and set
			\[
			E_\Sigma:=\{f\in H^1(\Omega_b):\ \mathrm{Tr}_\Sigma f\in H^1(\Sigma)\}.
			\]}
			Then there exists a unique globally defined weak solution $(u,v)$ to \eqref{thinfilm11b}, \mich{with $u=\mathrm{Tr}_\Sigma v$ a.e. on $\Sigma$, such that, for any $T>0$, } 
			\begin{align}
				&\nonumber\norm{v}_{L^\infty(0,T;L^2(\Omega_b))}+ \norm{u}_{L^\infty(0,T;L^2(\Sigma))}+\norm{v}_{L^2(0,T;H^1(\Omega_b))}+ \norm{u}_{L^2(0,T;H^1(\Sigma))}\\&
				+\norm{\partial_tv}_{L^2(0,T;E_\Sigma')}\leq C(T)(\norm{v_0}_{L^2(\Omega_b)}+\norm{u_0}_{L^2(\Sigma)})+C(T)\left(\norm{\op}_{L^2(\Sigma)}+\norm{h}_{L^2(\Omega_b)}+\norm{g}_{L^2(\Sigma)}\right),\label{est3}
			\end{align}
			\mich{where $v$ is understood as an $E_\Sigma$-valued function, so that $\partial_t v$ acts on any $\zeta\in E_\Sigma$ through both duality pairings appearing in \eqref{thinf} below,}
			and such that, for almost any $t>0$,
			\begin{align}
				\nonumber&\langle\partial_t u,  a(\mathbf x,t)\zeta\rangle_{H^1(\Sigma)',H^1(\Sigma)}+\langle\partial_t v,  a(\mathbf x,t)\zeta\rangle_{H^1(\Omega_b)',H^1(\Omega_b)}\\&\quad +\int_{\Sigma}  \mathbf B_{|\Sigma}(\mathbf x,t)\nabla_\Sigma u\cdot \nabla_\Sigma \zeta\d\mathcal H^{N-1}+\int_{\Omega_b} \mathbf D(\mathbf x,t)\nabla v\cdot \nabla \zeta\dx\nonumber\\&=\int_{\Sigma} (g+\op)\,\zeta\d \mathcal H^{N-1}+\int_{\Omega_b}h\zeta\dx, \label{thinf}
			\end{align}
			\mich{for any $\zeta\in E_\Sigma$. Note that only the tangential part $\bP\mathbf B_{|\Sigma}\bP$ of the film conductivity enters \eqref{thinf}, since $\nabla_\Sigma\zeta$ is tangential: this is why the interfacial equation in \eqref{thinfilm11b} is written with $\bP\mathbf B_{|\Sigma}\bP$ (cf. Remark~\ref{rem:structural}).}
			
			If additionally $v_0\in H^1(\Omega_b)$ with $u_0=\mathrm{Tr}_\Sigma v_0\in H^1(\Sigma)$, then the global solution is also strong and it holds 
			\begin{align}
				&\nonumber\norm{u}_{L^\infty(0,T;H^1(\Sigma))}+ \norm{v}_{L^\infty(0,T;H^1(\Omega_b))}+\norm{u}_{L^2(0,T;H^2(\Sigma))}+ \norm{v}_{L^2(0,T;H^2(\Omega_b))}\\&
				+\norm{\partial_tu}_{L^2(0,T;L^2(\Sigma))}+ \norm{\partial_tv}_{L^2(0,T;L^2(\Omega_b))}\nonumber\\&\leq C(T)(\norm{v_0}_{H^1(\Omega_b)}+\norm{u_0}_{H^1(\Sigma)})+C(T)\left(\norm{\op}_{L^2(\Sigma)}+\norm{h}_{L^2(\Omega_b)}+\norm{g}_{L^2(\Sigma)}\right),\label{extracondition}
			\end{align}
			for any $T>0$.
		\end{theorem}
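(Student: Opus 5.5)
The plan is to prove Theorem~\ref{thmthin} with a Faedo--Galerkin scheme, or equivalently the Lions theorem for nonautonomous forms, in the coupled space $E_\Sigma$. We use the Gelfand triple $E_\Sigma\hookrightarrow \mathcal H\hookrightarrow E_\Sigma'$. Here $\mathcal H:=L^2(\Omega_b)\times L^2(\Sigma)$ is the pivot space, and $E_\Sigma$ is identified with the closed subspace $\{(f,\mathrm{Tr}_\Sigma f)\}$ of $H^1(\Omega_b)\times H^1(\Sigma)$ endowed with the norm $\norm{f}_{H^1(\Omega_b)}^2+\norm{\mathrm{Tr}_\Sigma f}_{H^1(\Sigma)}^2$. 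This space is Hilbert, and it is dense in $\mathcal H$: functions in $C^\infty(\overline\Omega_b)$ are dense, and traces and interior values can be prescribed independently in $L^2$.

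\textbf{Step 1: the form and the energy estimate.} On $\mathcal H$ we use the time-dependent weighted inner product with weight $a(\cdot,t)$. The bilinear form is
\[
\mathfrak a(t;v,\zeta)=\int_\Sigma \mathbf B_{|\Sigma}\nabla_\Sigma v\cdot\nabla_\Sigma\zeta\,\d\mathcal H^{N-1}+\int_{\Omega_b}\mathbf D\nabla v\cdot\nabla\zeta\dx .
\]
It is bounded on $E_\Sigma$. It is also coercive up to a shift, $\mathfrak a(t;v,v)+c_0\norm{(v,\mathrm{Tr}_\Sigma v)}^2_{\mathcal H}\ge c_0\norm{v}^2_{E_\Sigma}$, because $\nabla_\Sigma v$ is tangential and $\bP\mathbf B\bP$ is uniformly elliptic on the tangent space.

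Testing \eqref{thinf} with $\zeta=v$ (in the Galerkin version) gives
\[
\tfrac12\ddt\Bigl(\int_\Sigma a\,u^2+\int_{\Omega_b}a\,v^2\Bigr)-\tfrac12\int_\Sigma\partial_t a\,u^2-\tfrac12\int_{\Omega_b}\partial_t a\,v^2 .
\]
The lower-order terms in $\partial_t a$ are controlled since $a\in C^1_b$. The right-hand side is handled by Young's inequality, and Gronwall's lemma then yields the first four terms of \eqref{est3}. The bound on $\partial_t v$ in $L^2(0,T;E_\Sigma')$ follows by duality from \eqref{thinf}. The multiplication by $a$ is harmless because $a\in C^1_b$ and $1/a$ is bounded, so $\zeta\mapsto a\zeta$ is an isomorphism of $E_\Sigma$.

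For the Galerkin basis we take the eigenfunctions of the self-adjoint operator associated with the form $\int_\Sigma\nabla_\Sigma v\cdot\nabla_\Sigma\zeta+\int_{\Omega_b}\nabla v\cdot\nabla\zeta+\int_\Sigma v\zeta+\int_{\Omega_b}v\zeta$ on $E_\Sigma$. This is a Laplacian with Wentzell (dynamic) boundary condition. Its resolvent is compact because $E_\Sigma\hookrightarrow\mathcal H$ compactly. Passing to the limit uses weak-star compactness and the Aubin--Lions lemma. Uniqueness follows by applying the same energy estimate to the difference of two solutions with zero data.

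\textbf{Step 2: strong solutions and \eqref{extracondition}.} In the Galerkin scheme we test with $\partial_t v$ (taking $\zeta=\partial_t v$, which also gives $\partial_t u$ on $\Sigma$). This gives
\[
c_0\bigl(\norm{\partial_t u}^2_{L^2(\Sigma)}+\norm{\partial_t v}^2_{L^2(\Omega_b)}\bigr)+\tfrac12\ddt\,\mathfrak a(t;v,v)-\tfrac12\,\mathfrak a'(t;v,v)\le\text{data terms},
\]
where $\mathfrak a'$ has $\partial_t\mathbf B$ and $\partial_t\mathbf D$ in place of the coefficients. That term is bounded by $C\norm{v}^2_{E_\Sigma}$, already controlled in $L^1(0,T)$ by Step 1. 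Gronwall then gives the $L^\infty H^1$ and $L^2L^2$ bounds for the time derivative.

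\textbf{Step 3: $H^2$ regularity.} Freezing $t$, we read the problem as an elliptic problem with Wentzell boundary condition:
\begin{align*}
-\Div(\mathbf D\nabla v)&=h-a\partial_t v\in L^2(\Omega_b),\\
-\Div_\Sigma(\bP\mathbf B\bP\nabla_\Sigma u)+\mathbf D\nabla v\cdot\bn_\Sigma&=g+\op-a\partial_t u\in L^2(\Sigma).
\end{align*}
We expect this step to be the main obstacle. The first approach to try is a bootstrap.
\begin{enumerate}[label=(\roman*)]
\item Treat $\mathbf D\nabla v\cdot\bn_\Sigma$ as an element of $H^{-1/2}(\Sigma)$, bounded via the equation in $\Omega_b$ and the $H^1$ bound. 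Surface elliptic regularity on the $C^3$ closed manifold $\Sigma$ then gives $u\in H^{3/2}(\Sigma)$.
\item The Dirichlet problem in $\Omega_b$ with datum $u\in H^{3/2}(\Sigma)$ gives $v\in H^2(\Omega_b)$, so $\mathbf D\nabla v\cdot\bn_\Sigma\in H^{1/2}(\Sigma)\subset L^2(\Sigma)$.
\item Surface regularity again gives $u\in H^2(\Sigma)$.
\end{enumerate}
To close the estimate quantitatively without circularity, we would interpolate, $\norm{\nabla v}_{H^{1/2}(\Sigma)}\le\epsilon\norm{v}_{H^2}+C_\epsilon\norm{v}_{H^1}$, in the spirit of the known theory for dynamic boundary conditions cited in the Introduction. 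Squaring and integrating in time, together with Step 2, yields \eqref{extracondition}.
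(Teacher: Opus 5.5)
Your proposal follows the paper's route. You obtain energy estimates by testing \eqref{thinf} with $\zeta=v$ and with $\zeta=\partial_t v$, close them with Gronwall's lemma, and then prove $H^2$-regularity for the frozen-time elliptic problem with Wentzell boundary condition. The paper only invokes known results for that last step (cf.\ \cite{MZ,GGP}); your bootstrap (i)--(iii) supplies it explicitly, and your Galerkin scheme with Wentzell eigenfunctions makes precise the approximation the paper leaves implicit.

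One correction: the chain (i)--(iii) is already non-circular, since each estimate uses only quantities bounded in the previous step, so no further closing argument is needed. Moreover, the interpolation inequality $\|\nabla v\|_{H^{1/2}(\Sigma)}\le\epsilon\|v\|_{H^2(\Omega_b)}+C_\epsilon\|v\|_{H^1(\Omega_b)}$ you propose is false. By the standard Ehrling argument it would force the trace $H^1(\Omega_b)\to H^{1/2}(\Sigma)$ to be compact, which it is not. You should simply drop it.
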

		\mich{\begin{remark}
			Notice that, in contrast with Remark \ref{rem0}, in the thin-film limit configuration the full $H^2(\Omega_b)$-control on $v$ is available. This will be crucial in the study of the limit as $\varepsilon\to0$.
		\end{remark}}
		Passing to the thin-film limit as $\eps\to0$, under suitable assumptions we obtain the following result.
		\begin{theorem}[Limit as $\eps\to0$]
			\label{thmconvergence} 
			Let  $a\in C^1_b(\R^N\times [0,\infty))$ and $\mathbf B,\mathbf D\in C^1_b(\R^N\times[0,\infty);\R^{N\times N}_{sym})$, and assume that there exists $c_0>0$ such that
			\(0<
						c_0\leq a(\mich{\mathbf x,t}),\) as well as \(c_0\normmm{\xi}^2\leq \xi\cdot \mathbf B(\mathbf x,t)\,\xi,\)
			and \(c_0\normmm{\xi}^2\leq \xi\cdot \mathbf D(\mathbf x,t)\,\xi,
			\)
			for every $\xi\in \R^N$ and every $(\mathbf x,t)\in\R^N\times[0,\infty)$.
			Let $v_{0,\eps}\in H^1(\Omega)$ be such that 
			\begin{align*}
				v_{0,\eps}=\begin{cases}
					u_{0,\eps},\quad&\text{ in }\Omega_\eps,\\
					v_0,\quad&\text{ in }\Omega_b,
				\end{cases}
			\end{align*}
			with $ \mathrm{Tr}_{\Sigma}u_{0,\eps}=\mathrm{Tr}_{\Sigma}v_0 $, and 
			\begin{align}\label{initial}
				\norm{u_{0,\eps}}_{H^1(\Omega_\eps)}\leq C\eps^\frac12, 
			\end{align}
			for some $C>0$ independent of $\eps$. Assume also that $g\in H^1(\Omega_\eps)$ satisfies
			\begin{align}
				\norm{g}_{L^2(\Omega_\eps)}\leq C\eps^\frac12,\quad \norm{g}_{H^1(\Omega_\eps)}\leq C\eps^{-\alpha},\label{f1cond}
			\end{align}
			{\mich{uniformly in $\eps>0$, for some $\alpha\in[0,\tfrac12)$, and that $h\in L^2(\Omega_b)$ and $\op\in C^{0,1}(\overline{\Omega}_\delta)$.
			Consider the sequence $\{(u_\eps,v_\eps)\}_{\eps>0}$ of solutions to problem \eqref{eps1b} given by Theorem \ref{thmnotthin}. Assume finally that either $\bP(\mathbf x)\mathbf B(\mathbf x,\cdot)=\mathbf B(\mathbf x,\cdot)\bP(\mathbf x)$ for any $\mathbf x\in \Sigma$, or that the following control holds:}}
			\begin{align}
				\norm{u_\eps}_{L^2(0,T;H^2(\Oe))}+\sqrt \eps\norm{v_\eps}_{L^2(0,T;H^2(\Omega_b))}\leq C(T)\sqrt\eps,\label{ctrl1a}
			\end{align}
			for some $C(T)>0$.
			
			\mich{Then there exists a globally defined $v:\Omega_b\times [0,\infty)\to \R$ such that }
			\begin{align*}
				&v_\eps \to v,\quad \text{weakly$^*$ in } L^\infty(0,T;H^1(\Omega_b)),
				\\&
				v_\eps \to v,\quad \text{strongly in } C([0,T];H^s(\Omega_b)),\quad s\in(\tfrac12,1),\\&
				\partial_t v_\eps \to \partial_t v,\quad \text{weakly in } L^2(0,T;L^2(\Omega_b)),
			\end{align*}
			for any $T>0$. Furthermore, introducing the averaged quantity 
			\[
			w_\eps:=\mM_\eps(u_\eps),
			\]
			where $\mM_\eps$ is defined in \eqref{average}, it holds 
			\begin{align*}
				&w_\eps \to u:=\mathrm{Tr}_{\Sigma}v,\quad \text{weakly$^*$ in } L^\infty(0,T;L^2(\Sigma)),\\&
				w_\eps \to u,\quad \text{weakly in } L^2(0,T;H^1(\Sigma)),
				\\&
				w_\eps \to u,\quad \text{strongly in } L^2(0,T;L^2(\Sigma)),\\&
				\partial_t w_\eps \to \partial_t u,\quad \text{weakly in } L^2(0,T;H^1(\Sigma)'),
			\end{align*}
			for any $T>0$. 
			
			Additionally, $(u,v)$ is the unique weak solution to problem \eqref{thinfilm11b} in the sense of Theorem \ref{thmthin}, with initial datum $v(0)= v_0 $ almost everywhere in $\Omega_b$ and $u(0)=u_0:=\mathrm{Tr}_\Sigma v_0$ almost everywhere on $\Sigma$.
		\end{theorem}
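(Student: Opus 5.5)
Our approach is a compactness argument: uniform estimates from the a priori bound \eqref{est2}, extraction of limits, and identification of the limit equation through test functions of the form $\zeta=\widetilde\eta$ in $\Omega_\eps$ extended into the bulk. Uniqueness of the limit problem then gives convergence of the whole family.

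\textbf{Step 1: uniform bounds.} We apply \eqref{est2} with the assumptions \eqref{initial}, \eqref{f1cond}, $\op\in C^{0,1}$ (so that $\norm{\op}_{L^2(\Sigma_{\mathrm p})}\le C$), and $v_0\in H^1$. Dividing by $\sqrt\eps$, this gives $\eps^{-1/2}$ times the film norms of $u_\eps$ bounded, together with $\norm{v_\eps}_{L^\infty(H^1(\Omega_b))}$ and $\norm{\partial_t v_\eps}_{L^2(L^2(\Omega_b))}$ bounded uniformly in $\eps$. For $w_\eps=\mM_\eps(u_\eps)$ we use \eqref{ctrl1} and \eqref{P1} with $p=2$. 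These give $w_\eps$ bounded in $L^\infty(0,T;L^2(\Sigma))\cap L^2(0,T;H^1(\Sigma))$, since $\eps^{-1/2}\norm{u_\eps}_{H^1(\Oe)}\le C$. Lemma \ref{timederivative} with \eqref{ctrl1} gives $\partial_t w_\eps$ bounded in $L^2(L^2(\Sigma))$ in fact, hence in $L^2(H^1(\Sigma)')$. By Aubin--Lions we get strong convergence of $v_\eps$ in $C([0,T];H^s(\Omega_b))$ for $s<1$ (since $H^1\Subset H^s$) and of $w_\eps$ in $L^2(L^2(\Sigma))$. For $s>\tfrac12$ the traces converge strongly, so $\mathrm{Tr}_\Sigma v_\eps\to\mathrm{Tr}_\Sigma v$ in $C([0,T];L^2(\Sigma))$. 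Lemma \ref{tracelemma} gives $\norm{\mathrm{Tr}_\Sigma u_\eps-w_\eps}_{L^2(\Sigma)}\le C\eps^{1/2}\norm{u_\eps}_{H^1(\Oe)}\le C\eps$. Since $\mathrm{Tr}u_\eps=\mathrm{Tr}v_\eps$, we conclude that the limit of $w_\eps$ equals $u=\mathrm{Tr}_\Sigma v$.

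\textbf{Step 2: passing to the limit in \eqref{weakform}.} Take $\zeta\in E_\Sigma$ smooth, say $\zeta\in C^1(\overline{\Omega_b})$ with trace $\eta\in H^2(\Sigma)$. Extend it to $\Oe$ by $\widetilde\eta$; the extension is in $H^1(\Omega)$ and is admissible. Divide \eqref{weakform} by $\eps$ and treat the film terms one at a time.
\begin{itemize}
\item Time term: Lemma \ref{timederivative} converts it to $\int_\Sigma\mM_\eps(a)\partial_t w_\eps\eta$ up to $O(\eps^{1/2})$. Here we use $\norm{u_\eps}_{H^1(L^2)}\le C\eps^{1/2}$, which needs $\partial_tu_\eps$ in $L^2$ from \eqref{est2}; $\mM_\eps(a)\to a_{|\Sigma}$ uniformly.
\item Diffusion term: Lemma \ref{anisolemma} converts it to $\int_\Sigma\mathbf B_{|\Sigma}\nabla_\Sigma w_\eps\cdot\nabla_\Sigma\eta$. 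In the commuting case \eqref{controlb2} gives an error $O(\eps^{1/2}\cdot\eps^{-1/2}\norm{u_\eps}_{H^1})=O(\eps^{1/2})$ after division by $\eps$. In the other case we use \eqref{controlb} with \eqref{ctrl1a}, and $\norm{\partial_{\bn_\Sigma}u_\eps}_{L^2(\Sigma)}$ is bounded via \eqref{ctrltrace1} applied to $\nabla u_\eps$. Both the $H^2$ bound and the $H^1$ bound are of order $\sqrt\eps$, so the error is $O(1)\cdot\eps\cdot\eps^{-1}$ times small factors. This is the delicate point; I expect to need exactly the $\sqrt\eps$ scaling in \eqref{ctrl1a} to make each term vanish.
\item Source and flux terms: by \eqref{basic}, $\eps^{-1}\int_{\Oe}g\widetilde\eta=\int_\Sigma\mM_\eps(g)\eta$, and by \eqref{changesurf}, $\int_{\Sigma_{\mathrm p}}\op\widetilde\eta\to\int_\Sigma\op\eta$ using the Lipschitz continuity of $\op$. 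For $g$ we use \eqref{ctrl1} and \eqref{ctrl2} with the $H^1$ bound in \eqref{f1cond} and $\alpha<\tfrac12$: the quantity $\eps^{-1/2}\norm{g-\widetilde{\mM_\eps g}}_{L^2}\le C\eps^{1/2-\alpha}\to0$, and $\mM_\eps(g)$ converges weakly in $L^2(\Sigma)$ to some limit, which we call $g$ on $\Sigma$.
\end{itemize}
The bulk terms pass to the limit by weak convergence. This yields \eqref{thinf} integrated in time for a dense class of $\zeta$, and by density for all $\zeta\in E_\Sigma$.

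\textbf{Step 3: initial data and uniqueness.} We recover $v(0)=v_0$ from the $C([0,T];H^s)$ convergence. Then $u(0)=\mathrm{Tr}_\Sigma v_0$. Theorem \ref{thmthin} gives uniqueness, so the whole family converges, not just a subsequence. The weak convergence of $\partial_t w_\eps$ to $\partial_t u$ follows from the bounds in Step 1.

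The main obstacle is Step 2 in the non-commuting case: controlling the $K_4$ normal-flux contribution in \eqref{controlb} using only \eqref{ctrl1a} and the trace bounds of Lemma \ref{ctrltraces}.
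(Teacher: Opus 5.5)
Your proof has the same architecture as the paper's: the recast formulation of Lemma~\ref{recastlemma} with test function $\widetilde\eta$ in $\Oe$, the same film terms handled by Lemma~\ref{timederivative}, Lemma~\ref{anisolemma}, \eqref{basic} and \eqref{changesurf}, identification of $\lim w_\eps$ through Lemma~\ref{tracelemma}, and uniqueness.

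Your Step~1 is a legitimate shortcut. The paper obtains the bounds on $w_\eps$ by testing \eqref{recast} with a lifting $\beta_\eps$ of $w_\eps$ and interpolating in $H^{\frac12}(\Sigma)$, and it bounds $\partial_t w_\eps$ only in $L^2(0,T;H^1(\Sigma)')$ by duality. Reading the bounds directly off \eqref{est2f} through \eqref{ctrl1}, \eqref{P1} and $\partial_t w_\eps=\mM_\eps(\partial_t u_\eps)$ is immediate and gives more.

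In the commuting case your argument is complete; the errors are in fact $O(\eps)$. One step is missing there: the limit of $\mM_\eps(g)$ must be identified as $\mathrm{Tr}_\Sigma g$. This follows from \eqref{trace}, which gives $\norm{\mathrm{Tr}_\Sigma g-\mM_\eps(g)}_{L^2(\Sigma)}\le C\eps^{\frac12-\alpha}$, and this is where the paper uses $\alpha<\frac12$. An unidentified weak limit pins down neither the limit problem nor convergence of the whole family.

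The genuine gap is the non-commuting alternative, which you leave open, and the tool you propose cannot close it. After division by $\eps$, the last term of \eqref{controlb} is $C\norm{\partial_{\bn_\Sigma}u_\eps}_{L^2(0,T;L^2(\Sigma))}\norm{\nabla_\Sigma\eta}_{L^2(0,T;L^2(\Sigma))}$. Applying \eqref{ctrltrace1} to $\nabla u_\eps$ gives only $C\eps^{\frac12}\norm{u_\eps}_{H^2(\Oe)}+C\eps^{-\frac12}\norm{\nabla u_\eps}_{L^2(\Oe)}=O(1)$, which is bounded but not vanishing.

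The paper's device is the transmission condition $\mathbf B\nabla u_\eps\cdot\bn_\Sigma=\eps\,\mathbf D\nabla v_\eps\cdot\bn_\Sigma$ together with $\norm{v_\eps}_{L^2(0,T;H^2(\Omega_b))}\le C(T)$ from \eqref{ctrl1a}. Be aware that this controls the conormal derivative, not $\partial_{\bn_\Sigma}u_\eps$. Using $\bP\,\mathrm{Tr}_\Sigma\nabla u_\eps=\nabla_\Sigma\mathrm{Tr}_\Sigma u_\eps=\nabla_\Sigma\mathrm{Tr}_\Sigma v_\eps$, on $\Sigma$ one has
\[
(\bn_\Sigma\cdot\mathbf B\bn_\Sigma)\,\partial_{\bn_\Sigma}u_\eps=\mathbf B\nabla u_\eps\cdot\bn_\Sigma-\bP\mathbf B\bn_\Sigma\cdot\nabla_\Sigma\mathrm{Tr}_\Sigma v_\eps .
\]
Under \eqref{ctrl1a}, Aubin--Lions makes the last term converge strongly in $L^2(0,T;L^2(\Sigma))$ to $\bP\mathbf B\bn_\Sigma\cdot\nabla_\Sigma u$. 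This limit is not small when $\bP\mathbf B\bn_\Sigma\neq\mathbf 0$.

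Combined with \eqref{trace} for $\nabla u_\eps\cdot\widetilde{\bn_\Sigma}$, this gives
\[
\eps^{-1}K_4\to-\int_0^T\int_\Sigma(\bn_\Sigma\cdot\mathbf B\bn_\Sigma)^{-1}(\bP\mathbf B\bn_\Sigma\cdot\nabla_\Sigma u)(\bP\mathbf B\bn_\Sigma\cdot\nabla_\Sigma\eta)\,\d\mathcal H^{N-1}\dt .
\]
The effective interfacial tensor is therefore the Schur complement $\bP\mathbf B\bP-(\bn_\Sigma\cdot\mathbf B\bn_\Sigma)^{-1}(\bP\mathbf B\bn_\Sigma)\otimes(\bP\mathbf B\bn_\Sigma)$, which is the usual thin-film effective conductivity, rather than $\bP\mathbf B_{|\Sigma}\bP$.

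So the obstacle you flagged is real, and the paper's step does not remove it. Whenever \eqref{ctrl1a} holds with $\bP\mathbf B\bn_\Sigma\cdot\nabla_\Sigma u\not\equiv0$, the limit solves a different problem than \eqref{thinfilm11b}. This route proves only the commuting alternative, where $K_4\equiv0$.
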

        
		\begin{remark}[Assumption $\bP \mathbf B=\mathbf B\bP$]
			\label{rem:structural}
			\mich{As follows from the weak formulation \eqref{thinf} of the limit problem, the effective conductivity tensor associated with the film region is $\bP\mathbf B_{|\Sigma}\bP$, so that only the tangential component of $\mathbf B$ contributes to the effective balance relation posed on the interface. A particularly relevant situation corresponds to conductivity tensors satisfying $\mathbf B\bP=\bP\mathbf B$ on $\Sigma$, as assumed, alternatively to \eqref{ctrl1a}, in Theorem \ref{thmconvergence}, and for the first convergence rate of Theorem \ref{thmrates}. This condition is equivalent to requiring that}
			\[
			(\mathbf B\bn_{\Sigma})\otimes \bn_{\Sigma}=\bn_{\Sigma}\otimes(\mathbf B\bn_{\Sigma})
			\qquad\text{on }\Sigma.
			\]
			\mich{To provide a geometric interpretation of this property, consider the case $N=3$ and let $\{\tau_1,\tau_2\}$ be an orthonormal basis of the tangent space to $\Sigma$, so that $\{\tau_1,\tau_2,\bn_\Sigma\}$ is an orthonormal basis of $\R^3$. With respect to it, a symmetric tensor $\mathbf B$ is represented by}
			\begin{align*}
				\begin{bmatrix}
					B_{\tau_1\tau_1}&B_{\tau_1\tau_2}&B_{\tau_1\bn_{\Sigma}}\\
					B_{\tau_1\tau_2}&B_{\tau_2\tau_2}&B_{\tau_2\bn_{\Sigma}}\\
					B_{\tau_1\bn_{\Sigma}}&B_{\tau_2\bn_{\Sigma}}&B_{\bn_{\Sigma}\bn_{\Sigma}}
				\end{bmatrix},
			\end{align*}
			\mich{and the previous condition reads}
			\begin{align*}
				(B_{\tau_1\bn_{\Sigma}},B_{\tau_2\bn_{\Sigma}},B_{\bn_{\Sigma}\bn_{\Sigma}})\otimes (0,0,1)=(0,0,1)\otimes (B_{\tau_1\bn_{\Sigma}},B_{\tau_2\bn_{\Sigma}},B_{\bn_{\Sigma}\bn_{\Sigma}}),
			\end{align*}
			\mich{which holds if and only if $B_{\tau_1\bn_\Sigma}=B_{\tau_2\bn_\Sigma}=0$. The tensor therefore takes the block-diagonal form}
			\begin{align*}
				\begin{bmatrix}
					B_{\tau_1\tau_1}&B_{\tau_1\tau_2}&0\\
					B_{\tau_1\tau_2}&B_{\tau_2\tau_2}&0\\
					0&0&B_{\bn_{\Sigma}\bn_{\Sigma}}
				\end{bmatrix},
			\end{align*}
			\mich{in which the tangential and the normal conductive contributions are completely decoupled.}
			
			\mich{In particular, under this condition $\bn_\Sigma$ is an eigenvector of $\mathbf B_{|\Sigma}$, so that $\mathbf B_{|\Sigma}\nabla_\Sigma u$ is a tangential field for every $u$. Since $\Div_\Sigma\mathbf F=\Div_\Sigma(\bP\mathbf F)-(\mathbf F\cdot\bn_\Sigma)H_\Sigma$ for any vector field $\mathbf F$, the curvature contribution then vanishes and the interfacial operator in \eqref{thinfilm11b} reduces to $\Div_\Sigma(\mathbf B_{|\Sigma}\nabla_\Sigma u)$. On the other hand, if the decoupling does not hold and $\bP\mathbf B_{|\Sigma}\bP$ does not coincide with $\mathbf B_{|\Sigma}$, the only effective component in the interfacial operator is still $\bP\mathbf B_{|\Sigma}\bP$ and the system would lose information during the passage to the limit $\varepsilon\to0$. Indeed, as expected, the quantities $\norm{\bP\mathbf B_{|\Sigma}\bn_\Sigma}_{L^\infty(\Sigma)}$ and $\norm{\bn_\Sigma\mathbf B_{|\Sigma}\bP}_{W^{1,\infty}(\Sigma)}$, which measure the failure of the condition, are precisely the ones entering the general convergence rate \eqref{C2} below.}
		\end{remark}
        
		\begin{remark}[Regularity of the initial datum]
			Condition \eqref{initial} on the initial datum is easily satisfied, for instance by assuming additionally that $u_{0, \eps}\in W^{1,\infty}(\Omega_\eps)$ uniformly in $\eps$, since $\normmm{\Omega_\eps}\leq C\eps$.
		\end{remark}
		\begin{remark}[Trace control]
			Observe that in general $\mathrm{Tr}_{\Sigma}v_\eps$ differs from $w_\eps$. \mich{Estimate \eqref{trace} is precisely what allows this identification to be recovered in the limit as $\eps\to0$. A similar issue arises for the initial datum, since }
			\[
			w_\eps(0)=\mM_\eps(u_{0,\eps}),
			\]
			and $\mathrm{Tr}_\Sigma v_{0,\eps}$ differs in general from $w_\eps(0)$. On the other hand, by \eqref{trace} and assumption \eqref{initial}, we obtain
			\begin{align}
				\label{control}
				\norm{\mathrm{Tr}_\Sigma  v_{0,\eps}-w_\eps(0)}_{L^2(\Sigma)}\leq C\eps^\frac12\norm{u_{0,\eps}}_{H^1(\Omega_\eps)}\leq C\eps,
			\end{align}
			\mich{which entails $w_\eps(0)\to \mathrm{Tr}_\Sigma  v_0 $ in $L^2(\Sigma)$ as $\eps\to0$, and therefore, in the limit, $u(0)=\mathrm{Tr}_{\Sigma} v_0=:u_0 $ almost everywhere on $\Sigma$.}
		\end{remark}
		\begin{remark}[Forcing term regularity]
			\mich{The first condition in \eqref{f1cond} is easily satisfied, for instance if $g\in L^\infty(\Omega_\eps)\cap H^1(\Oe)$ with $\norm{g}_{L^\infty(\Oe)}\leq C$ uniformly in $\eps$, again since $\normmm{\Omega_\eps}\leq C\eps$. The second one allows the $H^1(\Oe)$-norm of $g$ to blow up as $\eps\to0$, provided the blow-up rate is slower than $\eps^{-\frac12}$.}
		\end{remark}
		\begin{remark}[Further regularity assumption on $(u_\eps,v_\eps)$]
			\label{rem1}
			As observed in Remark \ref{rem0}, a complete $H^2$-estimate on $(u_\eps,v_\eps)$, as required in \eqref{ctrl1a}, does not appear simple nor natural to obtain. \mich{Assumption \eqref{ctrl1a} is therefore stated as an alternative to the more reasonable constitutive condition $\mathbf B\bP=\bP \mathbf B$ on $\Sigma$: the convergence result holds either under that condition on the coefficients, or under additional regularity of the solutions.}
		\end{remark}
		
		We conclude by giving an explicit rate of convergence of the solution $(u_\eps,v_\eps)$, for $\eps>0$, towards the thin-film solution. \rev{This can also be done without the assumption $\bP\mathbf B=\mathbf B\bP$ on $\Sigma$, obtaining a quantitative estimate which depends on the discrepancy between $\bP\mathbf B$ and $\mathbf B\bP$ on $\Sigma$.}
        
		\begin{theorem}[Convergence rates as $\eps\to 0$]
			\label{thmrates}
			Let  $a\in C^1_b(\R^N\times [0,\infty))$, $\mathbf D\in C^1_b(\R^N\times[0,\infty);\R^{N\times N}_{sym})$ and $\mathbf B\in C^{1,\gamma}(\R^N\times[0,\infty);\R^{N\times N}_{sym})$ for some $\gamma\in(0,1]$, and assume that there exists $c_0>0$ such that		
			$c_0\leq a(\mathbf x,t),$ as well as
			\(c_0\normmm{\xi}^2\leq \xi\cdot \mathbf B(\mathbf x,t)\,\xi,
			\)
			and \(c_0\normmm{\xi}^2\leq \xi\cdot \mathbf D(\mathbf x,t)\,\xi,
			\)
			for every $\xi\in \R^N$ \mich{and every $(\mathbf x,t)\in\R^N\times[0,\infty)$. }Assume also that, for \eqref{rate2}, $\bP(\mathbf x)\mathbf B(\mathbf x,\cdot)=\mathbf B(\mathbf x,\cdot)\bP(\mathbf x)$ for any $\mathbf x\in \Sigma$. 
			Let $v_{0,\eps}\in H^1(\Omega)$ be such that 
			\begin{align*}
				v_{0,\eps}=\begin{cases}
					u_{0,\eps},\quad&\text{ in }\Omega_\eps,\\
					v_0,\quad&\text{ in }\Omega_b,
				\end{cases}
			\end{align*}
			with $ \mathrm{Tr}_{\Sigma}u_{0,\eps}=\mathrm{Tr}_{\Sigma}v_0 $ and $ v_0 \in H^1(\Omega_b)$ with $u_0:= \mathrm{Tr}_\Sigma  v_0 \in H^1(\Sigma)$. \mich{ Let then $\op,g\in C^{0,1}(\overline\Omega_{{\delta}})$ and $h\in L^2(\Omega_b)$.}
			
			Let $\{(u_\eps,v_\eps)\}_{\eps>0}$ be the sequence of solutions to \eqref{eps1b} given by Theorem \ref{thmnotthin} and satisfying \eqref{est2}, and let $(u,v)$ be the strong solution to \eqref{thinfilm11b} satisfying \eqref{extracondition}, given by Theorem \ref{thmthin} with initial datum $(u_0,v_0)$.
			Then it holds 
			\begin{align}
				&\nonumber\eps^{-\frac12}\norm{u_\eps-\widetilde{u}}_{L^\infty(0,T;L^2(\Omega_\eps))}+\eps^{-\frac12}\norm{u_\eps-\widetilde{ u}}_{L^2(0,T;H^1(\Omega_\eps))}\\&\quad +\norm{v_\eps-{v}}_{L^\infty(0,T;L^2(\Omega_b))}+{\norm{v_\eps-v}_{L^2(0,T;H^1(\Omega_b))}}\leq C(T)\left(\eps^{-\frac12}\norm{u_{0,\eps}-\widetilde{u_0}}_{L^2(\Oe)}+\eps\right),
				\label{rate2}
			\end{align}
			for any $T>0$.
\rev{
			Finally, without the assumption $\bP\mathbf B=\mathbf B\bP$ on $\Sigma$, it holds more generally
			\begin{align}
				&\nonumber\eps^{-\frac12}\norm{u_\eps-\widetilde{u}}_{L^\infty(0,T;L^2(\Omega_\eps))}+\eps^{-\frac12}\norm{u_\eps-\widetilde{ u}}_{L^2(0,T;H^1(\Omega_\eps))}\\&\nonumber+\norm{v_\eps-{v}}_{L^\infty(0,T;L^2(\Omega_b))}+{\norm{v_\eps-v}_{L^2(0,T;H^1(\Omega_b))}}\\&\leq C(T)\left(\eps^{-\frac12}\norm{u_{0,\eps}-\widetilde{u_0}}_{L^2(\Oe)}+\norm{\bP\mathbf B_{|\Sigma}\bn_\Sigma}_{L^\infty(\Sigma)}+\norm{\bn_\Sigma\mathbf B_{|\Sigma}\bP}_{W^{1,\infty}(\Sigma)}+\eps\right),
				\label{C2}
			\end{align}
            for any $T>0$.
}
		\end{theorem}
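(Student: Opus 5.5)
The plan is a relative energy estimate. Define the comparison field
\[
\Theta:=\begin{cases}\widetilde u&\text{in }\Omega_\eps,\\ v&\text{in }\Omega_b,\end{cases}
\]
and set $Z:=(u_\eps,v_\eps)-\Theta$, i.e.\ $U:=u_\eps-\widetilde u$ in $\Omega_\eps$ and $V:=v_\eps-v$ in $\Omega_b$. Since $\mathrm{Tr}_\Sigma u_\eps=\mathrm{Tr}_\Sigma v_\eps$ and $u=\mathrm{Tr}_\Sigma v$, the function $\Theta$ belongs to $H^1(\Omega)$. Hence $Z(t)\in H^1(\Omega)$ is an admissible test function in \eqref{weakform}. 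The idea is to place all derivatives on the smooth limit solution, using the strong regularity \eqref{extracondition}, and to use only the weak regularity \eqref{est2} of $(u_\eps,v_\eps)$.

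\textbf{Residual of the limit solution.} For $\zeta\in H^1(\Omega)$, define the residual $\mathcal R(\zeta)$ obtained by inserting $\Theta$ into the $\eps$-weighted form \eqref{weakform}:
\begin{align*}
\mathcal R(\zeta):={}&\int_{\Oe}a\,\partial_t\widetilde u\,\zeta\dx+\int_{\Oe}\mathbf B\nabla\widetilde u\cdot\nabla\zeta\dx+\eps\int_{\Omega_b}\bigl(a\,\partial_t v\,\zeta+\mathbf D\nabla v\cdot\nabla\zeta\bigr)\dx\\
&-\int_{\Oe}g\zeta\dx-\eps\int_{\Omega_b}h\zeta\dx-\eps\int_{\Sigma_{\mathrm p}}\op\,\zeta\,\d\mathcal H^{N-1}.
\end{align*}

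\textbf{Eliminating the bulk flux.} The bulk equation of \eqref{thinfilm11b} holds in $L^2$, so integrating by parts turns the bulk contribution into $\eps\int_\Sigma\mathbf D\nabla v\cdot\bn_\Sigma\,\mathrm{Tr}_\Sigma\zeta$. The interfacial equation then replaces $\mathbf D\nabla v\cdot\bn_\Sigma$ by
\[
g+\op-a\,\partial_t u+\Div_\Sigma(\bP\mathbf B_{|\Sigma}\bP\nabla_\Sigma u).
\]
Thus $\mathcal R(\zeta)$ reduces to a sum of differences of the form ``$\int_{\Oe}\widetilde f\,\zeta\dx-\eps\int_\Sigma f\,\mathrm{Tr}_\Sigma\zeta\,\d\mathcal H^{N-1}$'' for $f\in\{a\partial_t u,\,g,\,\op\}$, plus a gradient pair.

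\textbf{Zeroth-order pairs.} For each such $f$, write the first integral via \eqref{basic} as $\eps\int_\Sigma f\,\mM_\eps(\zeta)$, up to an $O(\eps^{3/2})$ error from replacing $a,g,\op$ by their normal-constant extensions. These are Lipschitz, and the errors are handled as in Lemma~\ref{timederivative} and by \eqref{constext}. The difference $\eps\int_\Sigma f(\mM_\eps\zeta-\mathrm{Tr}_\Sigma\zeta)$ is then bounded by Lemma~\ref{tracelemma} by $C\eps^{3/2}\|f\|_{L^2(\Sigma)}\|\zeta\|_{H^1(\Oe)}$. For $\op$ on $\Sigma_{\mathrm p}$, one additionally uses \eqref{changesurf} and the difference $\zeta^\sharp(y,\eps)-\zeta(y)=\int_0^\eps\partial_r\zeta^\sharp\,\d r$, which gives $O(\eps^{3/2})\|\zeta\|_{H^1(\Oe)}$ by Cauchy--Schwarz.

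\textbf{Gradient pair (main obstacle).} This is the pair
\[
\int_{\Oe}\mathbf B\nabla\widetilde u\cdot\nabla\zeta\dx\quad\text{versus}\quad\eps\int_\Sigma\Div_\Sigma(\bP\mathbf B\bP\nabla_\Sigma u)\,\mathrm{Tr}_\Sigma\zeta\,\d\mathcal H^{N-1}.
\]
Here $\zeta$ has only $H^1$ regularity, so the derivative cannot be moved onto it. I would instead integrate by parts inside $\Oe$ on the smooth side. This produces
\[
-\int_{\Oe}\Div(\mathbf B\nabla\widetilde u)\,\zeta\dx
\]
together with boundary fluxes $\mathbf B\nabla\widetilde u\cdot\bn_\Sigma$ on $\Sigma_{\mathrm p}$ and on $\Sigma$.

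The volume term is treated with the expansion $\nabla\widetilde u=(\bI-d\widetilde W)^{-1}\widetilde{\nabla_\Sigma u}$ and the $C^{1,\gamma}$ regularity of $\mathbf B$. This gives
\[
\Div(\mathbf B\nabla\widetilde u)=\widetilde{\Div_\Sigma(\bP\mathbf B\bP\nabla_\Sigma u)}+\bigl(\text{normal-derivative terms}\bigr)+O(\eps)\bigl(|\widetilde{\nabla^2_\Sigma u}|+|\widetilde{\nabla_\Sigma u}|\bigr).
\]
It then reduces to a zeroth-order pair of the type above, with $f\in L^2(0,T;L^2(\Sigma))$ by \eqref{extracondition}.

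The two boundary fluxes nearly cancel against each other. Their difference is controlled through $\int_0^\eps\partial_r$, again with $O(\eps^{3/2})$ cost. What survives at leading order is
\[
\mathbf B\nabla\widetilde u\cdot\bn_\Sigma=\bP\mathbf B_{|\Sigma}\bn_\Sigma\cdot\nabla_\Sigma u+O(\eps).
\]
Under $\bP\mathbf B=\mathbf B\bP$ this vanishes, since $\bn_\Sigma$ is then an eigenvector of $\mathbf B_{|\Sigma}$. In general it produces exactly the terms $\|\bP\mathbf B_{|\Sigma}\bn_\Sigma\|_{L^\infty}$ and $\|\bn_\Sigma\mathbf B_{|\Sigma}\bP\|_{W^{1,\infty}}$ of \eqref{C2}. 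For these one should argue as in the proof of \eqref{final}, at cost $\eps^{1/2}\|\zeta\|_{H^1(\Oe)}$ instead of $\eps^{3/2}$.

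The overall bound I aim for is
\[
|\mathcal R(\zeta)|\le \bigl(C\eps^{3/2}F(t)+\eps^{1/2}\beta\,F(t)\bigr)\|\zeta\|_{H^1(\Oe)},
\]
where $F\in L^2(0,T)$ is controlled by \eqref{extracondition} and the data, and $\beta$ denotes the commutator norms. I expect the bookkeeping of the flux terms on $\Sigma$ and $\Sigma_{\mathrm p}$, and keeping $\eps$-powers sharp without any $H^2$ control on $u_\eps$, to be the delicate step.

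\textbf{Energy estimate.} Subtracting $\mathcal R(Z)$ from \eqref{weakform} tested with $Z$, and using
\[
\langle\partial_t Z,aZ\rangle=\tfrac12\tfrac{\d}{\d t}\int aZ^2-\tfrac12\int\partial_t a\,Z^2
\]
together with coercivity of $\mathbf B,\mathbf D$, yields
\begin{align*}
&\tfrac{\d}{\d t}\Bigl(\|U\|^2_{L^2(\Oe)}+\eps\|V\|^2_{L^2(\Omega_b)}\Bigr)+c\Bigl(\|\nabla U\|^2_{L^2(\Oe)}+\eps\|\nabla V\|^2_{L^2(\Omega_b)}\Bigr)\\
&\qquad\le C\Bigl(\|U\|^2_{L^2(\Oe)}+\eps\|V\|^2_{L^2(\Omega_b)}\Bigr)+C\bigl(\eps^3+\eps\beta^2\bigr)F(t)^2,
\end{align*}
after Young's inequality and absorption of $\|U\|_{H^1}$. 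Gronwall's lemma, with $V(0)=0$ and $U(0)=u_{0,\eps}-\widetilde{u_0}$, gives a bound by $C(T)\bigl(\|U(0)\|^2+\eps^3+\eps\beta^2\bigr)$. Dividing by $\eps$ and taking square roots gives \eqref{rate2}, where $\beta=0$, and \eqref{C2}.
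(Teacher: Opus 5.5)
Your overall scheme is sound, but the gradient pair, which you rightly flag as the crux, does not close as written. Put $\Phi:=\mathbf B\nabla\widetilde u\cdot\widetilde{\bn_\Sigma}$.

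\textbf{Where the accounting fails.}
\begin{itemize}
\item Even when $\bP\mathbf B=\mathbf B\bP$ on $\Sigma$, the normal-derivative part $\Div(\Phi\,\widetilde{\bn_\Sigma})$ of $\Div(\mathbf B\nabla\widetilde u)$ is pointwise of size $\normmm{\widetilde{\nabla_\Sigma u}}$. It contains $\bn_\Sigma\cdot(\partial_{\bn_\Sigma}\mathbf B)\nabla_\Sigma u$, which does not vanish in general. So it enters at the same order $\eps^{1/2}\norm{\zeta}_{L^2(\Oe)}$ as the main term, and it has no counterpart in the limit equation. It cannot be ``reduced to a zeroth-order pair''.
\item Nor is the flux difference $\int_{\Sigma_{\mathrm p}}\Phi\zeta-\int_\Sigma\Phi\zeta$ of order $\eps^{3/2}$ on its own. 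Off $\Sigma$ one only has $\Phi=O(\eps)$, while $\norm{\zeta}_{L^2(\Sigma_{\mathrm p})}$ carries the factor $\eps^{-1/2}$ of \eqref{ctrltrace2}.
\item These two leading-order pieces cancel exactly. By the divergence theorem, the flux difference minus $\int_{\Oe}\Div(\Phi\widetilde{\bn_\Sigma})\zeta\dx$ equals $\int_{\Oe}\Phi\,\nabla_{\bn_\Sigma}\zeta\dx$. In other words, integrating the normal component by parts is circular.
\item A pointwise $O(\eps)$ remainder in your expansion requires comparing $\nabla\mathbf B$ at $\mathbf x$ and at $\pi(\mathbf x)$. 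Under $C^{1,\gamma}$ this is only $O(\eps^\gamma)$, so as stated the rate would degrade to $\eps^\gamma$.
\end{itemize}

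\textbf{Repair.} First replace $\mathbf B$ by $\widetilde{\mathbf B_{|\Sigma}}$. This costs $C\eps^{3/2}\norm{\nabla_\Sigma u}_{L^2(\Sigma)}\norm{\nabla\zeta}_{L^2(\Oe)}$, by Lipschitz continuity and \eqref{constextgrad}, as for $K_1$ in Lemma~\ref{anisolemma}. Then split $\nabla\zeta$ into tangential and normal parts \emph{before} integrating by parts.
\begin{itemize}
\item The normal part is $\int_{\Oe}(\widetilde{\bP\mathbf B_{|\Sigma}\bn_\Sigma}\cdot\nabla\widetilde u)\,\nabla_{\bn_\Sigma}\zeta\dx$. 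It vanishes identically if $\bP\mathbf B=\mathbf B\bP$ on $\Sigma$. Otherwise Cauchy--Schwarz bounds it by $C\eps^{1/2}\norm{\bP\mathbf B_{|\Sigma}\bn_\Sigma}_{L^\infty(\Sigma)}\norm{\nabla_\Sigma u}_{L^2(\Sigma)}\norm{\nabla\zeta}_{L^2(\Oe)}$.
\item The tangential flux $\widetilde\bP\,\widetilde{\mathbf B_{|\Sigma}}\nabla\widetilde u$ has zero normal component, so integrating it by parts creates no boundary terms. Its divergence is $\widetilde{\Div_\Sigma(\bP\mathbf B_{|\Sigma}\bP\nabla_\Sigma u)}$ up to a genuine remainder $O(\eps)\bigl(\normmm{\widetilde{\nabla_\Sigma u}}+\normmm{\widetilde{\nabla^2_\Sigma u}}\bigr)$, which needs only $\mathbf B\in C^1$.
\end{itemize}
With this repair your argument yields \eqref{rate2}. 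It even yields \eqref{C2} without the $W^{1,\infty}(\Sigma)$ commutator term.

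\textbf{Comparison with the paper.} Your skeleton coincides with the paper's. Testing \eqref{weakform} with $Z$ and subtracting $\mathcal R(Z)$ is what the paper does when it subtracts $\eps$ times \eqref{thinf} and tests with $V_\eps$, and the Gronwall step is identical. The real difference is the gradient pair.
\begin{itemize}
\item The paper never integrates by parts in $\Oe$. It compares $\int_{\Oe}\mathbf B\nabla\widetilde u\cdot\nabla V_\eps\dx$ with $\eps\int_\Sigma\mathbf B_{|\Sigma}\nabla_\Sigma\mM_\eps(V_\eps)\cdot\nabla_\Sigma u\,\d\mathcal H^{N-1}$ via Lemma~\ref{anisolemma}, which rests on the averaged-gradient estimate \eqref{P2}. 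It then integrates by parts on $\Sigma$ to move the tangential derivative from $\mM_\eps(V_\eps)-V_\eps$ onto $u$ (term $L_8$).
\item You move all derivatives onto $\widetilde u$ inside $\Oe$. Every term then becomes a zeroth-order pair handled by Lemma~\ref{tracelemma} alone, and \eqref{P2} is never needed.
\item In exchange, the paper's route reuses machinery that Theorem~\ref{thmconvergence} requires anyway. There the gradient must stay on $\mM_\eps(u_\eps)$ and cannot be moved onto an $H^1$ test function.
\end{itemize}
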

		\begin{remark}
			As observed in \cite{Miura2}, the $L^2(\Omega_\eps)$-norms in \eqref{rate2} are multiplied by $\eps^{-\frac12}$, since such norms involve the square root of the thickness of $\Omega_\eps$, which is of order $\eps$. Notice also that, if for instance $u_{0,\eps}=\widetilde{u_0}$, then the rate of convergence in \eqref{rate2} is $\eps$.
		\end{remark}
		\begin{remark}
			\mich{Compared with the assumptions of Theorem \ref{thmconvergence}, here we do not require the additional $H^2$-control \eqref{ctrl1a} on $\{(u_{\eps},v_\eps)\}_\eps$, since the structural condition $\bP\mathbf B=\mathbf B\bP$ on $\Sigma$ is assumed for \eqref{rate2}. On the other hand, we assume here that the limit solution $(u,v)$ is more regular, namely that it satisfies \eqref{extracondition}. Also, $\mathbf B$ and $g$ are required to be more regular than in Theorem \ref{thmconvergence}, since a Lipschitz condition on them is needed in order not to spoil the convergence rate as $\eps\to0$}.
		\end{remark}
\rev
		{\begin{remark}[The case when $\bP \mathbf B\not=\mathbf B\bP$ on $\Sigma$]
			\label{rem:noncommuting}
			The more general case when $\bP \mathbf B\not=\mathbf B\bP$ is allowed in the theorem above. One sees that the rate of convergence depends on the discrepancy between these two quantities. Indeed, we have
			\[
			\norm{\bP\mathbf B_{|\Sigma}\bn_\Sigma}_{L^\infty(\Sigma)}=\norm{(\bP\mathbf B_{|\Sigma}-\mathbf B_{|\Sigma}\bP)\bn_\Sigma}_{L^\infty(\Sigma)},
			\]
            \begin{align*}
                \norm{\bn_\Sigma\mathbf B_{|\Sigma}\bP}_{W^{1,\infty}}=\norm{\bn_\Sigma(\bP\mathbf B_{|\Sigma}-\mathbf B_{|\Sigma}\bP)}_{W^{1,\infty}(\Sigma)}.
            \end{align*}
			If we assume these discrepancies to vanish with rate at least of order $\eps$, for instance choosing $\bP\mathbf B\bn_\Sigma$ depending on $\eps$ and such that $\sup_{\eps>0}\norm{\mathbf B_\varepsilon}_{C^{1,\gamma}(\R^N\times[0,\infty))}\leq C$ (and extending the results above in a straightforward way to this $\eps$-dependent case), then the rate of convergence from \eqref{C2} is preserved with respect to the case when $\bP\mathbf B=\mathbf B\bP$ on $\Sigma$. In a more physical interpretation, estimate \eqref{C2} shows that the normal heat flux due to the thin-film domain at the interface $\Sigma$ cannot have a too large tangential component, and this component has to vanish as $\eps\to0$ to guarantee the expected convergence to the limit thin-film configuration in the rather weak norms above.
		\end{remark}}
        
		\subsubsection{Proof of Theorem \ref{thmnotthin}}
		\label{sec:proofnonthin}
		\mich{Existence of solutions to \eqref{eps1b} can be obtained, for instance, by means} of a Galerkin approximation scheme or of a time discretization scheme. Nevertheless, since this is standard in the literature on parabolic PDEs, we only show the necessary formal energy estimates (we refer, for instance, to \cite{Miura2} for similar computations). The weak formulation we consider is \eqref{weakform}. In particular, we choose $\zeta$ so that $\zeta=u$ in $\Omega_\eps$ and $\zeta=v$ in $\Omega_b$, and obtain (here, for the sake of simplicity, we do not distinguish between $u$ and its traces on $\Sigma$ and ${\Sigma_{\mathrm p}}$):
		\begin{align*}
			&\frac d{dt}\left(\frac12\norm{\sqrt au}_{L^2(\Omega_\eps)}^2+\frac\eps2 \norm{\sqrt av}_{L^2(\Omega_b)}^2\right)+ \int_{\Oe}\mathbf B\nabla u\cdot\nabla u\dx+\eps  \int_{\Omega_b}\mathbf D\nabla v\cdot\nabla v\dx\\&=\frac12\int_{\Omega_\eps}\partial_ta \normmm{u}^2\dx + \frac\eps2\int_{\Omega_b}\partial_t {a}\normmm{v}^2\dx+\int_{\Omega_\eps}gu\dx+\eps\int_{\Omega_b}hv\dx+\eps\int_{{\Sigma_{\mathrm p}}}u\op \d\mathcal H^{N-1},
		\end{align*}
		and thus, since $a\in C^1_b(\R^N\times[0,\infty))$, after an application of the Cauchy-Schwarz and Young inequalities we end up with
		\begin{align*}
			&\frac d{dt}\left(\frac12\norm{\sqrt a u}_{L^2(\Omega_\eps)}^2+\frac\eps2 \norm{\sqrt av}_{L^2(\Omega_b)}^2\right)+ \frac12\int_{\Oe}\mathbf B\nabla u\cdot\nabla u\dx+\frac\eps2  \int_{\Omega_b}\mathbf D\nabla v\cdot\nabla v\dx\\&\leq C\left(\frac12\norm{\sqrt a u}_{L^2(\Omega_\eps)}^2+\frac\eps2 \norm{\sqrt av}_{L^2(\Omega_b)}^2\right)+C(\norm{g}_{L^2(\Omega_\eps)}^2+\eps\norm{h}_{L^2(\Omega_b)}^2+\eps\norm{\op}_{L^2({\Sigma_{\mathrm p}})}^2),
		\end{align*}
		\mich{where the boundary term has been estimated by means of \eqref{ctrltrace2}, which gives
		\[
		\eps\norm{u}_{L^2(\Sigma_{\mathrm p})}\norm{\op}_{L^2(\Sigma_{\mathrm p})}\leq C\eps^\frac32\norm{u}_{H^1(\Omega_\eps)}\norm{\op}_{L^2(\Sigma_{\mathrm p})}+C\eps^{\onehalf}\norm{u}_{L^2(\Omega_\eps)}\norm{\op}_{L^2(\Sigma_{\mathrm p})},
		\]
		the first contribution being absorbed, after Young's inequality, by the dissipative terms, and the second one by Gronwall's Lemma.}
		As a consequence, we immediately infer the first line of \eqref{est1} by Gronwall's Lemma. Concerning the time derivatives, we deduce by comparison the validity of the second line of \eqref{est1}, recalling that $a\in C^1_b(\R^N\times[0,\infty))$ and that it is uniformly bounded below by a positive constant.
		
		We now concentrate on obtaining the higher-order estimates, \mich{assuming $(u_0,v_0)\in H^1(\Omega_\eps)\times H^1(\Omega_b)$ with $\mathrm{Tr}_\Sigma u_0=\mathrm{Tr}_\Sigma v_0$ a.e. on $\Sigma$.}
        In particular, we set $\zeta=-\Div(\mathbf B\nabla u)$ in $\Omega_\eps$ \mich{and $\zeta=-\Div(\mathbf D\nabla v)$} in $\Omega_b$ in the weak formulation \eqref{weakform}, obtaining 
		\begin{align*}
			&\frac d{dt}\left(\frac12\int_{\Oe}a\nabla u\cdot \mathbf B\nabla u\dx+\frac\eps2\int_{\Omega_b}a\nabla v\cdot \mathbf D\nabla v\dx+\eps\int_{\Sigma_p}a\op u\d\mathcal H^{N-1}\right)\\&\quad +\int_{\Oe}\Div(\mathbf B\nabla u)^2\dx+\eps\int_{\Omega_b}\Div(\mathbf D\nabla v)^2\dx\\&
			=\frac12\int_{\Omega_\eps} \partial_t a {\nabla u}\cdot \mathbf B\nabla u\dx-\int_{\Omega_\eps}\nabla a \cdot \mathbf B\nabla u\,\partial_t u \dx +\frac\eps2\int_{\Omega_b} \partial_t a {\nabla v}\cdot \mathbf D\nabla v\dx-\eps\int_{\Omega_b}\nabla a \cdot \mathbf D\nabla v\,\partial_t v \dx \\&
			-\int_{\Omega_\eps}g\Div(\mathbf B\nabla u)\dx-\eps\int_{\Omega_b}h\Div(\mathbf D\nabla v)\dx+\eps\int_{{\Sigma_{\mathrm p}}}\pt a u\op\d\mathcal H^{N-1}\\&\quad +\frac12\int_{\Oe}a\nabla u\cdot \pt\mathbf B\nabla u\dx+\frac\eps2\int_{\Omega_b}a\nabla v\cdot \pt\mathbf D\nabla v\dx.
		\end{align*}
		Therefore, recalling the regularity of $a,\mathbf B,\mathbf D$, we infer, again after an application of the Cauchy-Schwarz and Young's inequalities,
		\begin{align}
			&\nonumber\frac d{dt}\left(\frac12\int_{\Oe}a\nabla u\cdot \mathbf B\nabla u\dx+\frac\eps2\int_{\Omega_b}a\nabla v\cdot \mathbf D\nabla v\dx+\eps\int_{{\Sigma_{\mathrm p}}} a u\op\d\mathcal H^{N-1}\right)\\&\nonumber+\int_{\Oe}\Div(\mathbf B\nabla u)^2\dx+\eps\int_{\Omega_b}\Div(\mathbf D\nabla v)^2\dx\\&\nonumber\leq C\left(\int_{\Oe}a\nabla u\cdot \mathbf B\nabla u\dx+\eps\int_{\Omega_b}a\nabla v\cdot \mathbf D\nabla v\dx\right)\\&\nonumber+\frac12\int_{\Oe}\Div(\mathbf B\nabla u)^2\dx+\frac\eps2\int_{\Omega_b}\Div(\mathbf D\nabla v)^2\dx\\&
			+ C(\norm{g}^2_{L^2(\Omega_\eps)}+\eps\norm{h}^2_{L^2(\Omega_b)}+\eps\norm{\op}_{L^2({\Sigma_{\mathrm p}})}^2)+\omega\norm{\partial_t u}^2_{L^2(\Omega_\eps)}+\omega\eps\norm{\partial_t v}^2_{L^2(\Omega_b)},
			\label{A0}
		\end{align}
		for some $\omega>0$ to be chosen later on, where we also used the inequality $\norm{u}_{L^2({\Sigma_{\mathrm p}})}\leq C\eps^{-\onehalf}\norm{u}_{H^1(\Omega_\eps)}$ from Lemma \ref{tracelemma1}. Then, choosing $\zeta$ so that $\zeta=\partial_t u$ in $\Omega_\eps$ and $\zeta=\partial_t v$ in $\Omega_b$ in \eqref{weakform}, we also get 
		\begin{align*}
			&\nonumber \frac d{dt}\left(\frac12\int_{\Oe}\nabla u\cdot \mathbf B\nabla u\dx+\frac\eps2\int_{\Omega_b}\nabla v\cdot \mathbf D\nabla v\dx+\eps\int_{{\Sigma_{\mathrm p}}} u\op\d\mathcal H^{N-1}\right)\\&
			+\norm{\sqrt a \partial_t u}_{L^2(\Omega_\eps)}^2+\eps\norm{\sqrt a \partial_t v}_{L^2(\Omega_b)}^2\\&=\frac12\int_{\Omega_\eps}\partial_t \mathbf B\nabla u\cdot\nabla u\dx +\frac\eps2\int_{\Omega_b}\partial_t \mathbf D\nabla v\cdot\nabla v\dx\\&
			+\int_{\Omega_\eps}g\partial_t u\dx+\eps\int_{\Omega_b}h\partial_t v\dx,
		\end{align*}
		which, again by standard inequalities, recalling the properties of $a,\mathbf B,\mathbf D$, gives 
		\begin{align}
			&\nonumber \frac d{dt}\left(\frac12\int_{\Oe}\nabla u\cdot \mathbf B\nabla u\dx+\frac\eps2\int_{\Omega_b}\nabla v\cdot \mathbf D\nabla v\dx+\eps\int_{{\Sigma_{\mathrm p}}} u\op\d\mathcal H^{N-1}\right)\\&\nonumber+\norm{\sqrt a \partial_t u}_{L^2(\Omega_\eps)}^2+\eps\norm{\sqrt  a \partial_t v}_{L^2(\Omega_b)}^2\\&\nonumber\leq C\left(\frac12\int_{\Oe}\nabla u\cdot \mathbf B\nabla u\dx+\frac\eps2\int_{\Omega_b}\nabla v\cdot \mathbf D\nabla v\dx\right)\\&\label{A1}
			+ C(\norm{g}^2_{L^2(\Omega_\eps)}+\eps\norm{h}^2_{L^2(\Omega_b)})+\frac12\norm{\sqrt a\partial_t u}^2_{L^2(\Omega_\eps)}+\frac\eps2\norm{\sqrt a\partial_t v}^2_{L^2(\Omega_b)},
		\end{align}
		entailing of course 
		\begin{align}
			&\nonumber \frac d{dt}\left(\frac12\int_{\Oe}\nabla u\cdot \mathbf B\nabla u\dx+\frac\eps2\int_{\Omega_b}\nabla v\cdot \mathbf D\nabla v\dx+\eps\int_{{\Sigma_{\mathrm p}}} u\op\d\mathcal H^{N-1}\right)\\&\nonumber+\frac12\norm{\sqrt a \partial_t u}_{L^2(\Omega_\eps)}^2+\frac\eps2\norm{\sqrt a \partial_t v}_{L^2(\Omega_b)}^2\\&\nonumber\leq C\left(\frac12\int_{\Oe}\nabla u\cdot \mathbf B\nabla u\dx+\frac\eps2\int_{\Omega_b}\nabla v\cdot \mathbf D\nabla v\dx\right)\\&\label{A2}
			+ C(\norm{g}^2_{L^2(\Omega_\eps)}+\eps\norm{h}^2_{L^2(\Omega_b)}),
		\end{align}
		\mich{so that, by summing \eqref{A0} with \eqref{A2} and choosing $\omega=\frac{c_0}{4}$, we finally deduce 
	\begin{align}
		&\frac d{dt}\left(\frac12\int_{\Oe}(1+a)\nabla u\cdot \mathbf B\nabla u\dx+\frac\eps2\int_{\Omega_b}(1+a)\nabla v\cdot \mathbf D\nabla v\dx+\eps\int_{{\Sigma_{\mathrm p}}} (1+a) u\op\d\mathcal H^{N-1}\right)\nonumber\\&
		+\int_{\Oe}\Div(\mathbf B\nabla u)^2\dx+\eps\int_{\Omega_b}\Div(\mathbf D\nabla v)^2\dx\nonumber+\frac{c_0}{4}\norm{\partial_t u}^2_{L^2(\Omega_\eps)}+\frac{c_0\eps}{4}\norm{\partial_t v}^2_{L^2(\Omega_b)}\\&\leq C\left(\frac12\int_{\Oe}(1+a)\nabla u\cdot \mathbf B\nabla u\dx+\frac\eps2\int_{\Omega_b}(1+a)\nabla v\cdot \mathbf D\nabla v\dx\right)\nonumber\\&
		+ C(\norm{g}^2_{L^2(\Omega_\eps)}+\eps\norm{h}^2_{L^2(\Omega_b)}+\eps\norm{\op}_{L^2({\Sigma_{\mathrm p}})}^2),\label{ffA}
\end{align}}
entailing \eqref{est2} after an application of Gronwall's Lemma. 
 Note that this is possible since, by Lemma \ref{tracelemma1},
\begin{align*}	&\normmm{\eps\int_{{\Sigma_{\mathrm p}}}(1+a)u\op\d\mathcal H^{N-1}}\leq \eps (1+\norm{a}_{C^0_b(\R^N)})\norm{u}_{L^2({\Sigma_{\mathrm p}})}\norm{\op}_{L^2({\Sigma_{\mathrm p}})}\\&\leq C\eps^\onehalf (1+\norm{a}_{C^0_b(\R^N)})(\norm{u}_{L^2(\Oe)}+\norm{\nabla u}_{L^2(\Oe)})\norm{\op}_{L^2({\Sigma_{\mathrm p}})}\\&
	\leq C(T)\eps(1+\norm{\op}^2_{L^2(\Sigma_p)})+\frac14 \int_{\Oe}(1+a)\nabla u\cdot \mathbf B\nabla u\dx,
\end{align*}
where we used that $u\in L^\infty(0,T;L^2(\Oe))$ by \eqref{est1}, so that the argument under the time derivative in \eqref{ffA} is bounded below by a (negative) constant which scales like $\eps$.
The proof of the theorem is concluded, since uniqueness is a trivial consequence of the linearity of the problem.

		\subsubsection{Proof of Theorem \ref{thmthin}}

		Again, to prove this theorem we should consider a suitable approximation scheme, but since this is a standard argument (for instance by means of a time discretization scheme) we prefer to stick to the \textit{a priori} energy estimates. In particular, by choosing $\zeta=v$ in the weak formulation \eqref{thinf} (recall that $u=\mathrm{Tr}_\Sigma v$ on $\Sigma$), we infer 
		\begin{align*}
			&\frac d{dt}\left(\frac12\norm{\sqrt a u}_{L^2(\Sigma)}^2+\frac12 \norm{\sqrt a v}_{L^2(\Omega_b)}^2\right)+ \int_{\Sigma}\mathbf B_{|\Sigma}\nabla_\Sigma u\cdot\nabla_\Sigma u\d\mathcal H^{N-1}+  \int_{\Omega_b}\mathbf D\nabla v\cdot\nabla v\dx\\&=\frac12\int_{\Sigma}\partial_ta \normmm{u}^2\d\mathcal H^{N-1} + \frac 12\int_{\Omega_b}\partial_ta\normmm{v}^2\dx+\int_{\Sigma}gu\d\mathcal H^{N-1}+\int_{\Omega_b}hv\dx+\int_{\Sigma}u\op \d\mathcal H^{N-1},
		\end{align*}
		and thus, thanks to the regularity of $a,\mathbf B,\mathbf D$, after an application of the Cauchy-Schwarz and Young inequalities we conclude
		\begin{align*}
			&\frac d{dt}\left(\frac12\norm{\sqrt a u}_{L^2(\Sigma)}^2+\frac12 \norm{\sqrt a v}_{L^2(\Omega_b)}^2\right)+ \frac12\int_{\Sigma}\mathbf B_{|\Sigma}\nabla_\Sigma u\cdot\nabla_\Sigma u\d\mathcal H^{N-1}+ \frac12 \int_{\Omega_b}\mathbf D\nabla v\cdot\nabla v\dx\\&\leq C\left(\frac12\norm{\sqrt a u}_{L^2(\Sigma)}^2+\frac12 \norm{\sqrt a v}_{L^2(\Omega_b)}^2\right)+C(\norm{g}_{L^2(\Sigma)}^2+\norm{h}_{L^2(\Omega_b)}^2+\norm{\op}_{L^2(\Sigma)}^2),
		\end{align*}
		which gives \eqref{est3} after an application of Gronwall's \mich{Lemma} and arguing by comparison for the time derivatives.
		
		\mich{As for the higher-order estimates \eqref{extracondition}, the proof is essentially the same as the one of \eqref{est2} (see Section \ref{sec:proofnonthin}).} We only point out the main differences, especially concerning the $H^2$-regularity, which is available here. By choosing $\zeta=\partial_t v$ in the weak formulation \eqref{thinf}, we get (recall again that $u=\mathrm{Tr}_\Sigma v$ on $\Sigma$)
		\begin{align*}
			&\nonumber \frac d{dt}\left(\frac12\int_{\Sigma}\nabla_\Sigma u\cdot \mathbf B_{|\Sigma}\nabla_\Sigma u\d\mathcal H^{N-1}+\frac12\int_{\Omega_b}\nabla v\cdot \mathbf D\nabla v\dx\right)\\&
			+\norm{\sqrt a \partial_t u}_{L^2(\Sigma)}^2+\norm{\sqrt a \partial_t v}_{L^2(\Omega_b)}^2\\&=\frac12\int_{\Sigma}\partial_t \mathbf B_{|\Sigma}\nabla_\Sigma u\cdot\nabla_\Sigma u\d\mathcal H^{N-1} +\frac12\int_{\Omega_b}\partial_t \mathbf D\nabla v\cdot\nabla v\dx\\&
			+\int_{\Sigma}(g+\op)\partial_t u\d\mathcal H^{N-1}+\int_{\Omega_b}h\partial_t v\dx,
		\end{align*}
		which immediately gives, by standard inequalities and using the regularity of $a,\mathbf B,\mathbf D$,
		\begin{align*}
			&\nonumber \frac d{dt}\left(\frac12\int_{\Sigma}\nabla_\Sigma u\cdot \mathbf B_{|\Sigma}\nabla_\Sigma u\d\mathcal H^{N-1}+\frac12\int_{\Omega_b}\nabla v\cdot \mathbf D\nabla v\dx\right)\\&
			+\frac12\norm{\sqrt a \partial_t u}_{L^2(\Sigma)}^2+\frac12\norm{\sqrt a \partial_t v}_{L^2(\Omega_b)}^2\\&\leq C\left(\frac12\int_{\Sigma}\nabla_\Sigma u\cdot \mathbf B_{|\Sigma}\nabla_\Sigma u\d\mathcal H^{N-1}+\frac12\int_{\Omega_b}\nabla v\cdot \mathbf D\nabla v\dx\right)
			+C\bigl(\norm{g+\op}_{L^2(\Sigma)}^2+\norm{h}^2_{L^2(\Omega_b)}\bigr),
		\end{align*}
		from which, after an application of Gronwall's Lemma, we infer
		\begin{align}
			&\nonumber\norm{u}_{L^\infty(0,T;H^1(\Sigma))}+ \norm{v}_{L^\infty(0,T;H^1(\Omega_b))}
			+\norm{\partial_t u}_{L^2(0,T;L^2(\Sigma))}+\norm{\partial_tv}_{L^2(0,T;L^2(\Omega_b))}\nonumber\\&\leq C(T)(\norm{v_0}_{H^1(\Omega_b)}+\norm{u_0}_{H^1(\Sigma)})+C(T)(\norm{g}_{L^2(\Sigma)}+\norm{h}_{L^2(\Omega_b)}+\norm{\op}_{L^2(\Sigma)}).\label{est2b}
		\end{align}
		Notice that we can now obtain the $H^2$-regularity of the solution by elliptic regularity arguments. \mich{Indeed, for almost any $t\in(0,T)$ we can restate the problem as the elliptic system
		\begin{align}
			\label{thinfilm1}
			\begin{cases}
				-\Div(\mathbf D(\mathbf x,t)\nabla v)=h_1:=h- a(\mathbf x,t)\partial_t v,\quad&\text{ in }\Omega_b,\\[3pt]
				\mathbf D(\mathbf x,t)\nabla v\cdot{\bn_{\Sigma}}-\Div_\Sigma( \bP\mathbf B_{|\Sigma}(\mathbf x,t)\bP\nabla_\Sigma u)+u= h_2:=\op+g-a_{|\Sigma}(\mathbf x,t)\partial_t u+u,\quad&\text{ on }\Sigma,
			\end{cases}
		\end{align}}
		where $u=\mathrm{Tr}_\Sigma v$ and where, by \eqref{est2b}, $h_1\in L^2(0,T;L^2(\Omega_b))$ and $h_2\in L^2(0,T;L^2(\Sigma))$. As a consequence, thanks to the regularity of $\mathbf B$ and $\mathbf D$, by well-known elliptic arguments (see, e.g., the proof of \cite[Lemma A.1]{MZ}, see also \cite{GGP}, where similar arguments are repeatedly used), we infer 
		\begin{align*}
			\norm{v}_{L^2(0,T;H^2(\Omega_b))}+\norm{u}_{L^2(0,T;H^2(\Sigma))}\leq C(T)(\norm{h_1}_{L^2(0,T;L^2(\Omega_b))}+\norm{h_2}_{L^2(0,T;L^2(\Sigma))}).
		\end{align*}
		\mich{Together with \eqref{est2b}, this concludes the proof of estimate \eqref{extracondition}. Since the system is linear, uniqueness of solutions follows.}
		
		\subsubsection{Proof of Theorem \ref{thmconvergence}}
        \label{Sec:convergence}
        
		We first prove the following lemma.
		\begin{lemma}
			\label{recastlemma}
			Under the assumptions of Theorem \ref{thmconvergence}, we define
			\begin{align*}
				w_\eps=\mM_\eps(u_\eps),\quad w_\eps(0)=\mM_\eps(u_{0,\eps})=\mM_\eps(v_{0,\eps}),\quad\text{ on }\Sigma.
			\end{align*}
			Then, for any $T>0$, it holds $w_\eps \in L^2(0,T;\mich{H^1(\Sigma)})\cap H^1(0,T;L^2(\Sigma))$ and
			\mich{\begin{align}
				\nonumber&\int_0^T\int_\Sigma \mM_\eps(a)\partial_t w_\eps \zeta\d\mathcal H^{N-1}\dt +\int_0^T\int_{\Omega_b} a(\mathbf x,t)\partial_t v_\eps  \zeta\dx\dt\\&\quad +\int_0^T\int_{\Sigma} \mathbf B_{|\Sigma}(\mathbf x,t)\nabla_\Sigma w_\eps\cdot \nabla_\Sigma \zeta\d\mathcal H^{N-1}\dt+\int_0^T\int_{\Omega_b} \mathbf D(\mathbf x,t)\nabla v_\eps\cdot \nabla \zeta\dx\dt\nonumber\\&\quad -\int_0^T\int_{\Sigma}\mM_\eps( g)\zeta\d \mathcal H^{N-1}\dt-\int_0^T\int_{\Omega_b}h\zeta\dx\dt-\int_0^T\int_{\Sigma}\op\zeta\d\mathcal H^{N-1}\dt=\mathcal R_\eps(u_\eps,w_\eps,\zeta,T),\label{recast}
			\end{align}}
			for any $\zeta\in L^2(0,T;H^1(\Omega_b))\cap L^2(0,T;H^1(\Sigma))$, where we do not distinguish between $\zeta$ and its trace over $\Sigma$. Here $\mathcal R_\eps(u_\eps,w_\eps,\zeta,T)$ is linear in $\zeta$. Moreover, if $\bP(\mathbf x)\mathbf B(\mathbf x,\cdot)=\mathbf B(\mathbf x,\cdot)\bP(\mathbf x)$ for any $\mathbf x\in \Sigma$, then it holds 
			\begin{align}
				\normmm{\mathcal R_\eps(u_\eps,w_\eps,\zeta,T)}\leq C(T)\eps\norm{\zeta}_{L^2(0,T;H^1(\Sigma))},\label{estR}
			\end{align}
			for any $\zeta\in L^2(0,T;H^1(\Omega_b))\cap L^2(0,T;H^1(\Sigma))$.  Analogously, for general matrices $\mathbf B$, if \eqref{ctrl1a} holds, then \eqref{estR} is true as well.
		\end{lemma}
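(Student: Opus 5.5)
The plan is to test the weak formulation \eqref{weakform} of Theorem~\ref{thmnotthin} with the specific function $\zeta_\eps$ equal to $\widetilde\zeta$ (the constant normal extension of $\mathrm{Tr}_\Sigma\zeta$) in $\Oe$ and to $\zeta$ in $\Omega_b$. This function lies in $H^1(\Omega)$ because the traces match on $\Sigma$. We then divide the whole identity by $\eps$ and integrate over $(0,T)$. The bulk terms $\eps\int_{\Omega_b}a\partial_tv_\eps\zeta$, $\eps\int_{\Omega_b}\mathbf D\nabla v_\eps\cdot\nabla\zeta$ and $\eps\int_{\Omega_b}h\zeta$ then appear exactly as in \eqref{recast}. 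The time derivative $\partial_tv_\eps$ is in $L^2$ by \eqref{est2}, since the initial data are in $H^1$. The regularity $w_\eps\in L^2(0,T;H^1(\Sigma))\cap H^1(0,T;L^2(\Sigma))$ follows from \eqref{P1} and from Lemma~\ref{timederivative} applied to $u_\eps\in H^1(0,T;L^2(\Oe))\cap L^2(0,T;H^1(\Oe))$, again using \eqref{est2}.

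Next we define $\mathcal R_\eps$ as the sum of the differences between each film term, divided by $\eps$, and its surface counterpart. There are four such differences.
\begin{itemize}
\item[(i)] The time term, $\eps^{-1}\int\!\int_{\Oe}a\partial_tu_\eps\widetilde\zeta-\int\!\int_\Sigma\mM_\eps(a)\partial_tw_\eps\zeta$. By \eqref{timecontrol} it is bounded by $C\eps^{1/2}\norm{u_\eps}_{H^1(0,T;L^2(\Oe))}\norm{\zeta}_{L^2L^2(\Sigma)}$, and this is $\le C\eps\norm{\zeta}$ because $\norm{\partial_tu_\eps}_{L^2L^2(\Oe)}\le C\sqrt\eps$ by \eqref{est2}. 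Indeed, under \eqref{initial}, \eqref{f1cond} and $\op$ Lipschitz, the right-hand side of \eqref{est2} is $O(\sqrt\eps)$.
\item[(ii)] The source term, $\eps^{-1}\int_{\Oe}g\widetilde\zeta-\int_\Sigma\mM_\eps(g)\zeta$. This vanishes identically by \eqref{basic}.
\item[(iii)] The boundary term, $\int_{\Sigma_{\mathrm p}}\op\widetilde\zeta-\int_\Sigma\op\zeta$. By \eqref{changesurf} it equals $\int_\Sigma(\op^\sharp(y,\eps)J(y,\eps)-\op(y))\zeta$, which is $O(\eps)\norm{\zeta}_{L^2(\Sigma)}$ by the Lipschitz continuity of $\op$ and \eqref{controlJ2}.
\item[(iv)] The diffusion term, $\eps^{-1}\int_{\Oe}\mathbf B\nabla u_\eps\cdot\nabla\widetilde\zeta-\int_\Sigma\mathbf B_{|\Sigma}\nabla_\Sigma w_\eps\cdot\nabla_\Sigma\zeta$.
\end{itemize}
All of these differences are linear in $\zeta$.

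The main obstacle is (iv). In the commuting case $\bP\mathbf B=\mathbf B\bP$ on $\Sigma$, we apply \eqref{controlb2} with $L=\mathbf B(\cdot,t)$, which is Lipschitz uniformly in $t$. This bounds the term by $C\eps^{1/2}\norm{u_\eps}_{H^1(\Oe)}\norm{\nabla_\Sigma\zeta}_{L^2(\Sigma)}$. Integrating in time and using $\norm{u_\eps}_{L^2H^1(\Oe)}\le C\sqrt\eps$ from \eqref{est2} gives $C\eps\norm{\zeta}_{L^2H^1(\Sigma)}$.

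In the general case we use \eqref{controlb} instead. Its first two contributions give $\eps^{1/2}(\norm{u_\eps}_{H^1}+\norm{u_\eps}_{H^2})$, which is $O(\eps)$ after time integration thanks to \eqref{ctrl1a}. The delicate contribution is $\norm{\partial_{\bn_\Sigma}u_\eps}_{L^2(\Sigma)}$, which carries no $\eps$ gain. To handle it we use the transmission condition $\mathbf B\nabla u_\eps\cdot\bn_\Sigma=\eps\mathbf D\nabla v_\eps\cdot\bn_\Sigma$. Alternatively, we bound it by Lemma~\ref{tracelemma1} applied to $f=\nabla u_\eps\cdot\widetilde{\bn_\Sigma}$, which yields $\eps^{1/2}\norm{u_\eps}_{H^2}+\eps^{-1/2}\norm{\nabla u_\eps}_{L^2}$. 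The second piece of that bound does not seem small enough. So we would rather revisit the $K_4$ term in the proof of Lemma~\ref{anisolemma} directly. There we write $\partial_{\bn_\Sigma}u_\eps$ through the normal flux $\bn_\Sigma\cdot\mathbf B\nabla u_\eps$, which is $O(\eps)$ in $L^2(\Sigma)$ by the $H^2$ bound on $v_\eps$ in \eqref{ctrl1a} together with the trace theorem, plus the tangential part of $\nabla u_\eps$ on $\Sigma$. The tangential part is controlled by $\nabla_\Sigma w_\eps$ via \eqref{trace} and \eqref{P2}. Making this last splitting close with an $O(\eps)$ factor is where we expect the real work. If it fails, we fall back on \eqref{ctrltrace1} combined with \eqref{ctrl1a}, accepting whatever weaker power results.

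Finally, we sum the four estimates to obtain \eqref{estR}.
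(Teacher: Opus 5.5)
Your argument for the commuting case is essentially the paper's proof. You use the same test function ($\widetilde\zeta$ in $\Oe$, $\zeta$ in $\Omega_b$), divide by $\eps$, and treat the same four remainders by \eqref{timecontrol}, \eqref{basic} (exact cancellation), \eqref{changesurf} with \eqref{controlJ2}, and \eqref{controlb2}. These are closed with the $O(\sqrt\eps)$ bounds on $u_\eps$ that \eqref{est2} gives under \eqref{initial} and \eqref{f1cond}.

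The gap is the last assertion, \eqref{estR} for general $\mathbf B$ under \eqref{ctrl1a}, which your proposal leaves open. The splitting you plan for the $K_4$ term of Lemma~\ref{anisolemma} cannot produce a factor $\eps$ (see below). Your fallback through \eqref{ctrltrace1} only gives $\norm{\partial_{\bn_\Sigma}u_\eps}_{L^2(0,T;L^2(\Sigma))}\le C\eps^{1/2}\norm{u_\eps}_{L^2(0,T;H^2(\Oe))}+C\eps^{-1/2}\norm{\nabla u_\eps}_{L^2(0,T;L^2(\Oe))}=O(1)$. That yields only $\normmm{\mathcal R_\eps}\le C(T)\norm{\zeta}_{L^2(0,T;H^1(\Sigma))}$, without the factor $\eps$.

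The paper handles this term in one line. It invokes the transmission condition $\mathbf B\nabla u_\eps\cdot\bn_\Sigma=\eps\,\mathbf D\nabla v_\eps\cdot\bn_\Sigma$ to conclude $\norm{\partial_{\bn_\Sigma}u_\eps}_{L^2(0,T;L^2(\Sigma))}\le C\eps\norm{v_\eps}_{L^2(0,T;H^2(\Omega_b))}\le C\eps$. Your suspicion of that step is well founded. On $\Sigma$ one has $\mathbf B\nabla u_\eps\cdot\bn_\Sigma=(\bn_\Sigma\cdot\mathbf B\bn_\Sigma)\,\partial_{\bn_\Sigma}u_\eps+\bP\mathbf B\bn_\Sigma\cdot\nabla_\Sigma\mathrm{Tr}_\Sigma u_\eps$. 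So the conormal condition controls the pure normal derivative only when $\bP\mathbf B\bn_\Sigma=0$, i.e.\ precisely in the commuting case, where $K_4$ already vanishes.

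For non-commuting $\mathbf B$, substitute this identity into $K_4$. Up to sign and terms of order $\eps$ under \eqref{ctrl1a}, it leaves the contribution $\int_0^T\int_\Sigma(\bn_\Sigma\cdot\mathbf B\bn_\Sigma)^{-1}(\bP\mathbf B\bn_\Sigma\cdot\nabla_\Sigma\mathrm{Tr}_\Sigma u_\eps)(\bP\mathbf B\bn_\Sigma\cdot\nabla_\Sigma\zeta)\,\d\mathcal H^{N-1}\dt$. This is of order one, because $\nabla_\Sigma\mathrm{Tr}_\Sigma u_\eps=\nabla_\Sigma\mathrm{Tr}_\Sigma v_\eps$ does not vanish in the limit. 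It is the familiar Schur-complement correction to the effective tangential conductivity of a thin anisotropic layer, as a flat constant-coefficient slab computation confirms.

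So neither your argument nor the paper's yields \eqref{estR} for general $\mathbf B$. What your proposal does establish is the lemma under $\bP\mathbf B=\mathbf B\bP$ on $\Sigma$.
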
        
		
		\begin{proof}
			For a fixed $\eps\in(0,\delta)$ we consider $(u_\eps\mich{,v_\eps})$ as the unique solution to \eqref{weakform} given by Theorem \ref{thmnotthin}. Thanks to assumptions \eqref{initial} and \eqref{f1cond}, we immediately infer from \eqref{est2} that
			\begin{align}
				&\nonumber\norm{u_\eps}_{L^\infty(0,T;H^1(\Omega_\eps))}+\sqrt\eps \norm{v_\eps}_{L^\infty(0,T;H^1(\Omega_b))}
				+\norm{\partial_tu_\eps}_{L^2(0,T;L^2(\Omega_\eps))}+\sqrt\eps \norm{\partial_tv_\eps}_{L^2(0,T;L^2(\Omega_b))}\nonumber\\&\leq C(T)(\sqrt\eps(1+\norm{\op}_{L^2(\Sigma_p)})+\sqrt\eps\norm{ v_0 }_{H^1(\Omega_b)}+\norm{u_{0,\eps}}_{H^1(\Omega_\eps)})\nonumber\\&\quad +C(T)(\norm{g}_{L^2(\Omega_\eps)}+\sqrt\eps\norm{h}_{L^2(\Omega_b)}+\sqrt\eps\norm{\op}_{L^2({\Sigma_{\mathrm p}})})\nonumber\\&
				\leq C(T)\sqrt \eps,\label{est2f}
			\end{align}
			for any $T>0$. Indeed, note that, by the change of variables \eqref{changesurf},
			\begin{align*}
								\norm{\op}_{L^2({\Sigma_{\mathrm p}})}^2=\int_{\Sigma} \bigl(\op^\sharp(y,\eps)\bigr)^2 J(y,\eps)\d \mathcal H^{N-1}(y)\leq C_J\norm{\mich{\op^\sharp(\cdot,\eps)}}_{L^2(\Sigma)}^2\mich{\leq C},
			\end{align*}
            \mich{uniformly in $\eps$, where we used \eqref{controlJ} and the fact that, by assumption, $\op\in C^{0}(\overline\Omega_\delta)$ and $\Omega_\eps\subset \Omega_\delta$ for any $\eps\in(0,\delta)$, so that $\op^\sharp(\cdot,\eps)$ is bounded in $L^\infty(\Sigma)$ uniformly with respect to $\eps$.}
			
			As a consequence, by means of \eqref{P1} and Lemma \ref{timederivative}, we infer that $w_\eps=\mM_\eps(u_\eps)\in L^2(0,T;H^1(\Sigma))\cap H^1(0,T;L^2(\Sigma))$.
			
			Then, given $\zeta\in L^2(0,T;H^1(\Omega_b))\cap L^2(0,T;H^1(\Sigma))$, we construct the test function 
			\begin{align}
				\widehat \zeta=\begin{cases}
					\widetilde\zeta,\quad\text{ in }\Oe,\\
					\zeta,\quad \text{ in }\Omega_b.
				\end{cases}
				\label{test}
			\end{align}
			To be precise, we should write $\widetilde\zeta=\widetilde{\mathrm{Tr}_{\Sigma}\zeta}$ in $\Omega_\eps$, but as above we do not distinguish between $\zeta$ and its trace over $\Sigma$. \mich{We can now use $\widehat\zeta$ as a test function in \eqref{weakform} and, after dividing by $\eps$, recast it as in \eqref{recast}, upon setting }
			\begin{align*}
				\mathcal R_\eps(u_\eps,w_\eps,\zeta,T):=\sum_{i=1}^4K_i,
			\end{align*}
			where 
			\begin{align*}
				&K_1:=\int_0^T\int_\Sigma \mM_\eps(a)\partial_t w_\eps \zeta\mich{\d\mathcal H^{N-1}\dt}-\frac1\eps\int_0^T\int_{\Oe} a\partial_t u_\eps \widetilde\zeta\dx\dt,\\&
				\mich{K_2:=\int_0^T\int_{\Sigma} \mathbf B_{|\Sigma}\nabla_\Sigma w_\eps\cdot \nabla_\Sigma \zeta{\d\mathcal H^{N-1}}\dt-\frac1\eps\int_0^T\int_{\Oe}\mathbf B\nabla u_\eps\cdot \nabla \widetilde\zeta\dx\dt,}\\&
				K_3:=-\int_0^T\int_{\Sigma}\mM_\eps( g)\zeta\d \mathcal H^{N-1}\dt+\frac1\eps\int_0^T\int_{\Omega_\eps} g\widetilde\zeta\dx\dt,\\&
				K_4:=-\int_0^T\int_{\Sigma}\op_{|\Sigma}\zeta\d \mathcal H^{N-1}\dt+\int_0^T\int_{{\Sigma_{\mathrm p}}}\op \widetilde\zeta\d \mathcal H^{N-1}\dt.
			\end{align*}
			Now, using \eqref{timecontrol} and \eqref{est2f}, we immediately infer, by the regularity of $a$, that, for any $T>0$,
			\begin{align*}
				\normmm{K_1}\leq C\norm{a}_{C^{0,1}(\R^N\times[0,T])}\eps^\frac12\norm{u_\eps}_{H^1(0,T;L^2(\Oe))}\norm{\zeta}_{L^2(0,T;L^2(\Sigma))}\leq C(T)\eps\norm{\zeta}_{L^2(0,T;L^2(\Sigma))},
			\end{align*}
			since $\norm{a}_{C^{0,1}(\R^N\times[0,T])}\leq C\norm{a}_{C^{1}_b(\R^N\times[0,T])}$.
			Moreover, if $\bP \mathbf B=\mathbf B\bP$ on $\Sigma$, then from \eqref{controlb2} and \eqref{est2f} we have   
			\begin{align*}
				&\normmm{K_2}\leq C\eps^\frac12\norm{\mathbf B}_{C^{0,1}(\R^N\times[0,T])}\norm{u_\eps}_{L^2(0,T;H^1(\Oe))}\norm{\zeta}_{L^2(0,T;H^1(\Sigma))}\\&
				\leq C\eps\norm{\zeta}_{L^2(0,T;H^1(\Sigma))}.
			\end{align*}
			On the other hand, for a general matrix $\mathbf B$, under assumption \eqref{ctrl1a} we obtain from \eqref{controlb} that
			\begin{align*}
				\normmm{K_2}&\leq C\eps^\frac12\norm{\mathbf B}_{C^{0,1}(\R^N\times[0,T])}\norm{u_\eps}_{L^2(0,T;H^1(\Omega_\eps))}\norm{\nabla_\Sigma \zeta}_{L^2(0,T;L^2(\Sigma))}\\&
				\quad +\nonumber C\eps^\frac12\norm{\mathbf B}_{C^0_b(\R^N\times[0,T])}\norm{u_\eps}_{L^2(0,T;H^2(\Omega_\eps))}\norm{\nabla_\Sigma \zeta}_{L^2(0,T;L^2(\Sigma))}\\&\quad+C\norm{\mathbf B}_{C^0_b(\R^N\times[0,T])}\norm{\partial_{\bn_{\Sigma}} u_\eps}_{L^2(0,T;L^2(\Sigma))}\norm{\nabla_\Sigma \zeta}_{L^2(0,T;L^2(\Sigma))}\\&
				\leq C\eps\norm{\zeta}_{L^2(0,T;H^1(\Sigma))},
			\end{align*}
			\mich{where we have used the fact that $\mathbf B \nabla u_\eps \cdot {\bn_{\Sigma}}=\eps \mathbf D\nabla v_\eps\cdot {\bn_{\Sigma}}$ a.e. on $\Sigma\times(0,T)$, and thus, by \eqref{ctrl1a},}
			\begin{align*}
				\norm{\partial_{\bn_{\Sigma}} u_\eps}_{L^2(0,T;L^2(\Sigma))}\leq C\eps\norm{\partial_{\bn_{\Sigma}} v_\eps}_{L^2(0,T;L^2(\Sigma))}\leq C\eps\norm{v_\eps}_{L^2(0,T;H^2(\Omega_b))}.
			\end{align*}
			
			Then, exploiting \eqref{basic}, we infer 
			\begin{align*}
				\int_0^T\int_{\Sigma}\mM_\eps( g)\zeta\d \mathcal H^{N-1}\dt=\frac1\eps\int_0^T\int_{\Omega_\eps} g\widetilde\zeta\dx\dt,
			\end{align*}
			and thus $K_3=0$. In conclusion, concerning $K_4$, we have, by the change of variables \eqref{changesurf},
			\begin{align}
				\nonumber\normmm{K_4}\leq&\normmm{\int_0^T\int_{\Sigma}\op_{|\Sigma}\zeta\bigl(1-\mich{J(y,\eps)}\bigr)\d \mathcal H^{N-1}\dt}+\normmm{\int_0^T\int_{\Sigma}\bigl(\op_{|\Sigma}-\op^\sharp(y,\eps)\bigr)\zeta J(y,\eps)\d \mathcal H^{N-1}\dt}\\&
				\leq CT^\frac12\eps{\norm{\op}_{L^2(\Sigma)}\norm{\zeta}_{L^2(0,T;L^2(\Sigma))}+CT^\frac12\eps}\mich{\norm{\op}_{C^{0,1}(\overline{\Omega}_\delta)}}\norm{\zeta}_{L^2(0,T;L^2(\Sigma))},\label{K4}
			\end{align}
			where we have used \eqref{controlJ2} \mich{and the fact that, $\op$ being Lipschitz continuous,}
			\begin{align*}
				\norm{\op_{|\Sigma}-\op^\sharp(\cdot,\eps)}_{L^2(\Sigma)}\leq C\eps\norm{\op}_{C^{0,1}(\overline{\Omega}_\delta)}.
			\end{align*}
			Putting together the estimates for $K_1,K_2,K_3,K_4$, we can finally conclude the proof of \eqref{estR}.
		\end{proof}
		In order to continue the proof of Theorem \ref{thmconvergence}, we need uniform estimates on the sequence $\{w_\eps\}_\eps$. \mich{To this aim, the test function in \eqref{recast} has to be chosen carefully. Indeed, it is not true in general that $w_\eps=\mathrm{Tr}_\Sigma v_\eps$, }and thus we cannot simply choose $\zeta=v_\eps$ in \eqref{recast}. The idea is then to exploit the surjectivity of the trace operator over $\Sigma$ from $H^1(\Omega_b)$ onto $H^\frac12(\Sigma)$: given $w_\eps(t)\in H^1(\Sigma)\hookrightarrow H^\frac12(\Sigma)$ for almost any $t>0$, there exists $\beta_\eps(t)\in H^1(\Omega_b)$ such that $\mathrm{Tr}_\Sigma \mich{\beta_\eps}=w_\eps$ a.e. on $\Sigma$ and, without loss of generality since an infimum over such functions will be taken, $\norm{\beta_\eps(t)}_{H^1(\Omega_b)}\leq \norm{w_\eps(t)}_{H^\frac12(\Sigma)}+1$. Therefore, we can choose $\zeta=\beta_\eps\in L^2(0,T;H^1(\Omega_b))\cap L^2(0,T;H^1(\Sigma))$ in \eqref{recast}, to obtain, for any $0<t\leq T$, 
                \begin{align}\nonumber&\frac12\norm{\sqrt{\mM_\eps(a(t))}w_\eps(t)}^2_{L^2(\Sigma)}+\int_0^t\int_\Sigma \mich{\mathbf B_{|\Sigma}\nabla_\Sigma w_\eps\cdot\nabla_\Sigma w_\eps\d\mathcal H^{N-1}\d s}\\&\nonumber=\frac12\norm{\sqrt{\mM_\eps(a(0))}w_\eps(0)}^2_{L^2(\Sigma)}+\frac12\int_0^t\int_\Sigma \partial_t\mM_\eps(a)\normmm{w_\eps}^2\d\mathcal H^{N-1} \mich{\d s}\nonumber\\&\nonumber-\int_0^t\int_{\Omega_b}a\partial_t v_\eps \beta_\eps\dx\d s-\int_0^t\int_{\Omega_b} \mathbf D{\nabla v_\eps}\cdot\nabla \beta_\eps\dx\d s\\&\quad +\int_0^t\int_{\Omega_b}h\beta_\eps\dx\d s+\int_0^t\int_{\Sigma}(\mM_\eps(g)+\op)w_\eps\d \mathcal H^{N-1} \d s
			+\mathcal R_\eps(u_\eps,w_\eps,\beta_\eps,t).\label{ff}
		\end{align}
		First, concerning $w_\eps(0)$, we have, by the regularity of $a$ and assumption \eqref{initial}, making use of \eqref{ctrl1},
		\begin{align*}
			\frac12\norm{\sqrt{\mM_\eps(a(0))}w_\eps(0)}^2_{L^2(\Sigma)}\leq C \normm{w_\eps(0)}_{L^2(\Sigma)}^2=C\norm{\mM_\eps({u}_{0,\eps})}^2_{L^2(\Sigma)}\leq C\eps^{-1}\norm{{u}_{0,\eps}}_{L^2(\Omega_\eps)}^2\leq C,
		\end{align*}
		for some $C>0$ independent of $\eps>0$.
		Then, recalling again the regularity of $a$, we have
		\begin{align*}
			\normmm{\frac12\int_0^t\int_\Sigma \partial_t\mM_\eps(a)\normmm{w_\eps}^2\d\mathcal H^{N-1}\mich{\d s}}\leq \mich{C}\norm{a}_{C^1_b({\R^N\times[0,T]})}\norm{w_\eps}^2_{L^2(0,t;L^2(\Sigma))}.
		\end{align*}
		Also, by the Cauchy-Schwarz and Young inequalities and the regularity of $a,\mathbf D$, we deduce
		\begin{align*}
			&\normmm{\int_0^t\int_{\Omega_b}a\partial_t v_\eps \beta_\eps\dx\d s+\int_0^t\int_{\Omega_b} \mathbf D{\nabla v_\eps}\cdot\nabla \beta_\eps\dx\d s+\int_0^t\int_{\Omega_b}h\beta_\eps\dx\d s+\int_0^t\int_{\Sigma}(\mM_\eps(g)+\op)w_\eps\d\mathcal H^{N-1}\d s}\\&
			\leq \norm{a}_{C^0_b(\R^N\times[0,T])}\norm{\partial_t v_\eps}_{L^2(0,T;L^2(\Omega_b))}\norm{\beta_\eps}_{L^2(0,t;L^2(\Omega_b))}\\&\quad +\norm{ \mM_\eps(g)+\op}_{L^2(0,T;L^2(\Sigma))}\norm{w_\eps}_{L^2(0,t;L^2(\Sigma))}\\&\quad +\norm{\mathbf D}_{C^0_b(\R^N\times[0,T])}\norm{\nabla v_\eps}_{L^2(0,T;L^2(\Omega_b))}\norm{\nabla \beta_\eps}_{L^2(0,t;L^2(\Omega_b))}+\norm{ h}_{L^2(0,T;L^2(\Omega_b))}\norm{\beta_\eps}_{L^2(0,t;L^2(\Omega_b))}\\&
			\leq C(T)\bigl(\norm{w_\eps}_{L^2(0,t;L^2(\Sigma))}+\norm{\beta_\eps}_{L^2(0,t;H^1(\Omega_b))}\bigr)\leq C(T)+\norm{w_\eps}_{L^2(0,t;L^2(\Sigma))}^2+\norm{\beta_\eps}_{L^2(0,t;H^1(\Omega_b))}^2,
		\end{align*}
		where in the last estimate we used the uniform controls \eqref{est2f}. Here we also used \eqref{ctrl1}, so that 
		\begin{align}
			\norm{\mM_\eps (g)}_{L^2(0,T;L^2(\Sigma))}\leq \mich{C}\eps^{-\frac12}T^\frac12\norm{g}_{L^2(\Omega_\eps)}\leq CT^\frac12,\label{ct}
		\end{align}
		by assumption \eqref{f1cond}.
		\mich{Notice that this estimate shows the role of assumptions \eqref{initial} and \eqref{f1cond}, which are precisely what makes the above bounds uniform in $\eps>0$. }
		
		In conclusion, recalling \eqref{estR}, we have, by Young's inequality,
		\begin{align*}
			\normmm{\mathcal R_\eps(u_\eps,w_\eps,\beta_\eps,t)}\leq C(T)\eps\norm{w_\eps}_{L^2(0,t;H^1(\Sigma))}\leq C(T)\eps^2+\frac{c_0}{2C_J}\norm{\nabla w_\eps}^2_{L^2(0,t;L^2(\Sigma))}+\frac12\norm{w_\eps}^2_{L^2(0,t;L^2(\Sigma))}.
		\end{align*}
				Plugging all these estimates in \eqref{ff}, recalling that $\xi\cdot \mathbf B\xi\geq c_0\normmm{\xi}^2$, and $J\geq \frac1{C_J}$, by \eqref{controlJ}, we end up with 
		\begin{align}
			\nonumber&\frac12\norm{\sqrt{\mM_\eps(a(t))}w_\eps(t)}^2_{L^2(\Sigma)}+\frac{c_0}{2C_J}\int_0^t\int_\Sigma \normmm{\nabla_\Sigma w_\eps}^2\d\mathcal H^{N-1}\d s\\&
			\leq C(T)(1+\eps^2)+C(T)\norm{w_\eps}^2_{L^2(0,t;L^2(\Sigma))}+\norm{\beta_\eps}_{L^2(0,t;H^1(\Omega_b))}^2,\label{ff2}
		\end{align}
		for any $t\in(0,T]$ and for any $T>0$, but since this inequality holds for any $\beta_\eps$ such that $\mathrm{Tr}_\Sigma \beta_\eps=w_\eps$, we can take the infimum over all these functions and,  since by definition
		\begin{align}
			\label{H12}
			\norm{w_\eps}_{H^\frac12(\Sigma)}:=\inf_{\beta_\eps\in H^1(\Omega_b):\ \mathrm{Tr}_\Sigma \beta_{\eps}=w_\eps }\norm{\beta_\eps}_{H^1(\Omega_b)},
		\end{align}
		we obtain
		\begin{align}
			\nonumber&\frac12\norm{\sqrt{\mM_\eps(a(t))}w_\eps(t)}^2_{L^2(\Sigma)}+\frac{c_0}{2C_J}\int_0^t\int_\Sigma \normmm{\nabla_\Sigma w_\eps}^2\d \mathcal H^{N-1}\d s\\&
			\leq C(T)(1+\eps^2)+C(T)\norm{w_\eps}_{L^2(0,t;H^\frac12(\Sigma))}^2\nonumber\\&\leq C(T)(1+\eps^2)+C(T)\norm{w_\eps}^2_{L^2(0,t;L^2(\Sigma))}+C\norm{w_\eps}_{L^2(0,t;L^2(\Sigma))}\norm{w_\eps}_{L^2(0,t;H^1(\Sigma))}\nonumber\\&
			\leq C(T)(1+\eps^2)+C(T)\norm{w_\eps}_{L^2(0,t;L^2(\Sigma))}^2+\frac{c_0}{4C_J}\norm{\nabla w_\eps}_{L^2(0,t;L^2(\Sigma))}^2,\label{ff3}
		\end{align}
		where in the last inequality we used the well-known interpolation $\norm{\cdot}_{H^\frac12(\Sigma)}\leq C\norm{\cdot}_{L^2(\Sigma)}^\frac12\norm{\cdot}_{H^1(\Sigma)}^\frac12$, together with Young's inequality. Since this holds for any $t\in(0,T]$, by the integral Gronwall Lemma we infer that
		\begin{align}
			\norm{w_\eps}_{L^\infty(0,T;L^2(\Sigma))}+\norm{w_\eps}_{L^2(0,T;H^1(\Sigma))}\leq C(T),
			\label{regveps}
		\end{align}
		for any $T>0$, uniformly in $\eps>0$.
		
		Concerning the time derivative, given $\xi\in L^2(0,T;H^1(\Sigma))$, let $\beta\in L^2(0,T;H^1(\Omega_b))$ be such that $\mathrm{Tr}_\Sigma \beta=\xi$, and set $\zeta=\beta$ in \eqref{recast}. After standard inequalities, together with the use of \eqref{estR}, we infer
		\begin{align*}
			&\normmm{\int_0^T \int_\Sigma \mM_\eps(a)\partial_tw_\eps \xi\d\mathcal H^{N-1}\dt}\\&\leq \left(\norm{a}_{C^0_b(\R^N\times[0,T])}\norm{\partial_t v_\eps}_{L^2(0,T;L^2(\Omega_b))}+\norm{\mathbf D}_{C^0_b(\R^N\times[0,T])}\norm{\nabla v_\eps}_{L^2(0,T;L^2(\Omega_b))}+\norm{h}_{L^2(0,T;L^2(\Omega_b))}\right)\\&\quad \times\norm{\beta}_{L^2(0,T;H^1(\Omega_b))}\\&
			+\left(\norm{\mathbf B}_{L^\infty(\Sigma\times(0,T))}\norm{\nabla_\Sigma w_\eps}_{L^2(0,T;L^2(\Sigma))}+\norm{\mM_\eps (g)+\op}_{L^2(0,T;L^2(\Sigma))}+C(T)\eps\right)\norm{\xi}_{L^2(0,T;H^1(\Sigma))}.
		\end{align*}
		\mich{Recalling \eqref{ct}, so that $\norm{\mM_\eps (g)}_{L^2(0,T;L^2(\Sigma))}\leq CT^\frac12$, and using the estimates \eqref{est2f} together with \eqref{regveps},} we infer 
		\begin{align*}
			\normmm{\int_0^T \int_\Sigma \mM_\eps(a)\partial_tw_\eps \xi\d\mathcal H^{N-1}\dt}\leq C(T)\bigl(\norm{\xi}_{L^2(0,T;H^1(\Sigma))}+\norm{\beta}_{L^2(0,T;H^1(\Omega_b))}\bigr),
		\end{align*}
		and, since in this estimate $\beta$ is arbitrary, recalling \eqref{H12} we deduce 
		\begin{align*}
			\normmm{\int_0^T \int_\Sigma \mM_\eps(a)\partial_tw_\eps \xi\d\mathcal H^{N-1}\dt}\leq C(T)\bigl(\norm{\xi}_{L^2(0,T;H^1(\Sigma))}+\norm{\xi}_{L^2(0,T;H^{\frac 12}(\Sigma))}\bigr)\leq 
			C(T)\norm{\xi}_{L^2(0,T;H^1(\Sigma))},
		\end{align*}
		giving
		\begin{align}
			\norm{\mM_\eps(a)\partial_tw_\eps}_{L^2(0,T;H^1(\Sigma)')}\leq C(T),
			\label{est1a}
		\end{align}
		for any $T>0$. In conclusion, since $a\geq c_0>0$ and thus, recalling \eqref{controlJ}, $\mM_\eps(a)\geq \frac{c_0}{C_J}$, \mich{we infer, for any $\xi\in L^2(0,T;H^1(\Sigma))$,}
		\begin{align*}
			&\normmm{\int_0^T \int_\Sigma \partial_tw_\eps \xi\d\mathcal H^{N-1}\dt} \leq \normmm{\int_0^T \int_\Sigma \mM_\eps(a)\partial_tw_\eps \frac{\xi}{\mM_\eps(a)}\d\mathcal H^{N-1}\dt} \\&\leq  \norm{\mM_\eps(a)\partial_tw_\eps}_{L^2(0,T;H^1(\Sigma)')}\norm{\frac{\xi}{\mM_\eps(a)}}_{L^2(0,T;H^1(\Sigma))}
			\leq C(T)\norm{\xi}_{L^2(0,T;H^1(\Sigma))},
		\end{align*}
		\mich{		where we also used, by \eqref{P1} with $p=\infty$, that
		\begin{align*}
			\norm{\nabla_\Sigma\mM_\eps({ a})}_{L^\infty(\Sigma)}\leq C\norm{ a}_{W^{1,\infty}(\Oe)}\leq C\norm{a}_{C^1_b(\R^N\times[0,T])}.
		\end{align*}
		This means that we can infer
		\begin{align}
			\norm{\partial_tw_\eps}_{L^2(0,T;H^1(\Sigma)')}\leq C(T),
			\label{est1b}
		\end{align}
		for any $T>0$.}
		
		Exploiting the uniform estimates \eqref{est2f} for $v_\eps$ in $\Omega_b$, and \eqref{regveps}--\eqref{est1b} for $w_\eps$ on $\Sigma$, we can now deduce, by compactness and the Aubin-Lions Lemma, that there exist a (nonrelabeled) subsequence $\{(v_\eps,w_\eps)\}_{\eps}$ and a pair $(u,v)$ such that 
		\begin{align*}
			&v_\eps \to v,\quad \text{weakly$^*$ in } L^\infty(0,T;H^1(\Omega_b)),
			\\&
			v_\eps \to v,\quad \text{strongly in } C([0,T];H^s(\Omega_b)),\quad s\in(\tfrac12,1),\\&
			\partial_t v_\eps \to \partial_t v,\quad \text{weakly in } L^2(0,T;L^2(\Omega_b)),
		\end{align*}
		as well as
		\begin{align*}
			&w_\eps \to u,\quad \text{weakly$^*$ in } L^\infty(0,T;L^2(\Sigma)),\\&
			w_\eps \to u,\quad \text{weakly in } L^2(0,T;H^1(\Sigma)),
			\\&
			w_\eps \to u,\quad \text{strongly in } L^2(0,T;L^2(\Sigma)),\\&
			\partial_t w_\eps \to \partial_t u,\quad \text{weakly in } L^2(0,T;H^1(\Sigma)'),
		\end{align*}
		for any $T>0$. We can now identify $u$ with $\mathrm{Tr}_\Sigma v$ a.e. on $\Sigma$. Indeed, by means of \eqref{trace} we have, for any $\eps$ sufficiently small,
		\begin{align}
			\int_0^T\norm{\mathrm{Tr}_\Sigma u_\eps-w_\eps}_{L^2(\Sigma)}^2\dt\leq C\eps\int_0^T\norm{u_\eps}_{H^1(\Omega_\eps)}^2\dt\leq C(T)\eps^2,\label{trace2}
		\end{align}
		by means of \eqref{est2f}.
		Also, thanks to the convergences above, namely $v_\eps\to v$ \mich{strongly in $C([0,T];H^s(\Omega_b))$ for $s\in(\frac12,1)$,} by the continuity of the trace operator we get
		\begin{align*}
			\mathrm{Tr}_\Sigma u_\eps=\mathrm{Tr}_\Sigma v_\eps\to \mathrm{Tr}_\Sigma v,\quad \text{ strongly in }L^2(0,T;H^{s-\frac12}(\Sigma)).
		\end{align*}
		Since we also know that $w_\eps\to u$ strongly in $L^2(0,T;L^2(\Sigma))$ as $\eps\to0$, this means that
		\begin{align*}
			& \norm{\mathrm{Tr}_\Sigma v-u}_{L^2(0,T;L^2(\Sigma))}\\&\leq \norm{\mathrm{Tr}_\Sigma v-\mathrm{Tr}_\Sigma u_\eps}_{L^2(0,T;L^2(\Sigma))}+\norm{\mathrm{Tr}_\Sigma u_\eps-w_\eps}_{L^2(0,T;L^2(\Sigma))}+\norm{w_\eps-u}_{L^2(0,T;L^2(\Sigma))}\to 0,
		\end{align*}
		as $\eps\to0$, entailing as desired that $\mathrm{Tr}_\Sigma v=u$ a.e. on $\Sigma$.
		To conclude the proof of Theorem \ref{thmconvergence} we only need to pass to the limit in \eqref{recast}. First, let us observe that, by the $C^{1}$ regularity of $a$, we immediately have, using \eqref{trace},
		\begin{align*}
			\sup_{t\in[0,T]}\norm{a_{|\Sigma}(t)-\mM_\eps(a(t))}_{L^2(\Sigma)}\leq C\eps^\frac12\norm{a}_{L^\infty(0,T;H^1(\Omega_\eps))}\leq C\eps\norm{a}_{C^1_b(\R^N\times [0,\infty))}\to 0,
		\end{align*}
		as $\eps\to0$, for any $T>0$, and the same holds for $\mathbf B$, so that in the limit as $\eps\to 0$ we simply need to evaluate $a$ and $\mathbf B$ on $\Sigma$. On the other hand, recalling assumption \eqref{f1cond}, we have by means of \eqref{trace} that 
		\begin{align*}
			\norm{\mathrm{Tr}_\Sigma g-\mM_\eps(g)}_{L^2(\Sigma)}\leq C\eps^\frac12\norm{g}_{H^1(\Oe)}\leq C\eps^{\frac12-\alpha}\to 0,
		\end{align*}
		as $\eps\to0$, since we have assumed $\alpha\in[0,\frac12)$. Using these results and the convergences above we can finally pass to the limit as $\eps\to0$ in \eqref{recast}, making use of \eqref{estR}, so that the pair $(u,v)$ satisfies 
		\begin{align}
			\nonumber&\int_0^T\langle a\partial_t u ,\zeta\rangle_{H^1(\Sigma)',H^1(\Sigma)}\d t +\int_0^T \langle a \partial_t v,  \zeta\rangle_{H^1(\Omega_b)',H^1(\Omega_b)}\dt\\&			+\int_0^T\int_{\Sigma} \mathbf B_{|\Sigma}\nabla_\Sigma u\cdot \nabla_\Sigma \zeta\d\mathcal H^{N-1}\dt+\int_0^T\int_{\Omega_b} \mathbf D\nabla v\cdot \nabla \zeta\dx\d t\nonumber\\&\quad -\int_0^T\int_{\Sigma} (g+\op)\zeta\d \mathcal H^{N-1}\dt-\int_0^T\int_{\Omega_b}h\zeta\d x\dt=0,\label{recast1}
		\end{align}
		for any $\zeta\in L^2(0,T;H^1(\Omega_b))\cap L^2(0,T;H^1(\Sigma))$, with $u=\mathrm{Tr}_{\Sigma}v$ a.e. in $\Sigma\times(0,T)$. At last, concerning the initial datum, by \eqref{control} we see that $w_\eps(0)\to \mathrm{Tr}_\Sigma  v_0 $ strongly in $L^2(\Sigma)$ as $\eps\to0$, and thus $\mathrm{Tr}_\Sigma v(0)=\mathrm{Tr}_\Sigma  v_0 $. Since we clearly have $v(0)= v_0 $ in $\Omega_b$, this concludes the proof of Theorem \ref{thmconvergence}. Note that, by the uniqueness of the solution to \eqref{recast1}, \mich{the entire sequence $\{(v_\eps,w_\eps)\}_\eps$ converges as above to the same limit as $\eps\to0$.}
		
		\subsubsection{Proof of Theorem \ref{thmrates}}
		\mich{The proof takes some inspiration from \cite[Theorem 1.3]{Miura2}, but it is considerably extended in order to take care of the nontrivial film-bulk coupling.}
		Let us use the same notation as in Section \ref{Sec:convergence}, namely we consider the sequence $\{(u_\eps,v_\eps)\}_\eps$ and the limit solution $(u,v)$ satisfying \eqref{thinfilm11b}. We then introduce
		\begin{align*}
			& V_\eps:=\begin{cases}u_\eps-\widetilde{u},\quad\text{ in }\Oe,\\
				v_\eps-v,\quad\text{ in }\Omega_b,
			\end{cases}
		\end{align*}
		so that 
		\begin{align}
			V_\eps(0)=\begin{cases}
				u_{0,\eps}-\widetilde{u_0},\quad\text{ in }\Oe,\\
				0,\quad\text{ in }\Omega_b,
			\end{cases}
			\label{Vzero}
		\end{align}
		where we recall that $u_0=\mathrm{Tr}_\Sigma v_0$ by assumption.
		\mich{Notice that, since $\widetilde u$ is the constant extension of $u=\mathrm{Tr}_\Sigma v$ along the normal direction, the traces of $V_\eps$ on $\Sigma$ from within $\Omega_\eps$ and from within $\Omega_b$ coincide:
		\begin{align}
			\mathrm{Tr}_{\Sigma}\bigl(u_\eps-\widetilde u\bigr)=\mathrm{Tr}_\Sigma u_\eps-u=\mathrm{Tr}_\Sigma v_\eps-\mathrm{Tr}_\Sigma v,
			\label{tracejump}
		\end{align}
		so that $V_\eps$ has no jump across $\Sigma$ and $V_\eps\in L^2(0,T;H^1(\Omega))$.}
		\mich{Let us now assume that $v$ is also a strong solution according to Theorem \ref{thmthin}, namely that, in addition to $ v_0 \in H^1(\Omega_b)$, it holds $u_0=\mathrm{Tr}_\Sigma  v_0  \in H^1(\Sigma)$. }Then, by \eqref{extracondition}, we have the following further uniform estimate
		\begin{align}
			\norm{\partial_t u}_{L^2(0,T;L^2(\Sigma))}+\norm{u}_{L^2(0,T;H^2(\Sigma))}\leq C(T),\label{extrareg}
		\end{align}
		for any $T>0$ and some $C(T)>0$.
		
		\mich{Now, from \eqref{weakform}, integrated in time on $(0,t)$ with $t\in(0,T]$, satisfied by $(u_\eps,v_\eps)$, we subtract \eqref{thinf}, integrated in time on $(0,t)$ and satisfied by $(u,v)$, multiplied by $\eps>0$.} We get
		\begin{align}
			\nonumber&\int_0^t\int_{\Oe} a\partial_t u_\eps \zeta\dx\d s-\eps\int_0^t\int_\Sigma a\partial_t u \zeta\d\mathcal H^{N-1}\d s+\eps\int_0^t\int_{\Omega_b} a\partial_t V_\eps  \zeta\dx\d s\\&\quad +\int_0^t\int_{\Oe} \mathbf B\nabla u_\eps\cdot \nabla \zeta\dx\d s-\eps\int_0^t\int_{\Sigma} \mathbf B_{|\Sigma}\nabla_\Sigma u\cdot \nabla_\Sigma \zeta\d\mathcal H^{N-1}\d s+\eps\int_0^t\int_{\Omega_b} \mathbf D\nabla V_\eps\cdot \nabla \zeta\dx\d s\nonumber\\&\quad -\int_0^t\int_{\Oe} g\zeta\dx\d s+\eps\int_0^t\int_{\Sigma} g\zeta\d \mathcal H^{N-1}\d s-\eps\int_0^t\int_{{\Sigma_{\mathrm p}}} \op\zeta\d\mathcal H^{N-1}\d s+\eps\int_0^t\int_{\Sigma} \op\zeta\d \mathcal H^{N-1}\d s=0,\label{recast3}
		\end{align}
		for any $\zeta\in L^2(0,t;H^1(\Omega))$. Thanks to the trace identity \eqref{tracejump} on $\Sigma$, we can safely choose $\zeta=V_\eps\in L^2(0,t;H^1(\Omega))$ and, by adding and subtracting suitable terms, obtain
		\begin{align}
			\nonumber&\underbrace{\int_0^t\int_{\Oe} a\partial_t u_\eps V_\eps\dx\d s-\eps\int_0^t\int_\Sigma \partial_t u\, \mM_\eps(a V_\eps)\d\mathcal H^{N-1}\d s}_{L_1}+\frac{\eps}{2}\int_{\Omega_b} a(t) \normmm{V_\eps(t)}^2 \dx\\&\quad +\underbrace{\int_0^t\int_{\Oe} \mich{\mathbf B\nabla u_\eps\cdot \nabla V_\eps\dx\d s}-\eps\int_0^t\int_{\Sigma} \mathbf B_{\mich{|\Sigma}}\nabla_\Sigma u\cdot \nabla_\Sigma V_\eps\d\mathcal H^{N-1}\d s}_{L_2}+\eps\int_0^t\int_{\Omega_b} \mathbf D\nabla V_\eps\cdot\nabla V_\eps\dx\d s\nonumber\\&
			=\frac{\eps}{2}\int_{\Omega_b} a(0) \normmm{V_\eps(0)}^2 \dx+
			\underbrace{\eps\int_0^t\int_\Sigma \partial_t u \left(aV_\eps-\mM_\eps(aV_\eps)\right)\d\mathcal H^{N-1}\d s}_{L_3}\nonumber\\&
			+\underbrace{\eps\int_0^t\int_{\Sigma} g\bigl(\mM_\eps(V_\eps)-V_\eps\bigr)\d \mathcal H^{N-1}\d s}_{L_4}
			+\underbrace{\int_0^t\int_{\Oe} gV_\eps\dx\d s-\eps\int_0^t\int_{\Sigma} g\,\mM_\eps(V_\eps)\d \mathcal H^{N-1}\d s}_{L_5}\nonumber\\&
			+\underbrace{\eps\int_0^t\int_{{\Sigma_{\mathrm p}}} \op V_\eps\d\mathcal H^{N-1}\d s-\eps\int_0^t\int_{\Sigma}\op V_\eps\d \mathcal H^{N-1}\d s}_{L_6}.
			\label{recast4}
		\end{align}
        
		We now estimate $L_1$--$L_6$. First, observe that, similarly to the proof of Lemma \ref{timederivative}, it holds, using \eqref{basic},
		\begin{align*}
			\eps\int_0^t\int_\Sigma \partial_t u\, \mM_\eps(a V_\eps)\d\mathcal H^{N-1}\d s=\int_0^t\int_{\Oe} a\,\partial_t\widetilde u\, V_\eps\dx\d s, 
		\end{align*}
		so that it is immediate to infer, recalling that $V_\eps=\mich{u_\varepsilon}-\widetilde u$ in $\Omega_\varepsilon$,
		\begin{align*}
			L_1&=\int_0^t\int_{\Oe}\frac{a}2\ddt \normmm{V_\eps}^2\dx\d s\\&=\frac12\int_{\Oe}a(t)\normmm{V_\eps(t)}^2\dx-\frac12\int_{\Oe}a(0)\normmm{V_\eps(0)}^2\dx-\int_0^t\int_{\Oe}\frac{\partial_t a}2 \normmm{V_\eps}^2\dx\d s.
		\end{align*}
		
		We now decompose $L_2$ as
		\begin{align*}
			L_2&=\int_0^t\int_{\Oe}\mathbf B\nabla V_\eps\cdot\nabla V_\eps\dx\d s\mich{+}\underbrace{\int_0^t\int_{\Oe}\mathbf B\nabla \widetilde u\cdot \nabla V_\eps\dx\d s\mich{-}\eps\int_0^t\int_{\Sigma}\mathbf B_{|\Sigma}\nabla_\Sigma \mM_\eps(V_\eps)\cdot \nabla_\Sigma u\d \mathcal H^{N-1}\d s}_{L_7}\\&\quad +\underbrace{\eps\int_0^t\int_{\Sigma}\mathbf B_{\mich{|\Sigma}}\bigl(\nabla_\Sigma \mM_\eps(V_\eps)-\nabla_\Sigma V_\eps\bigr)\cdot \nabla_\Sigma u\d \mathcal H^{N-1}\d s}_{L_8}.
		\end{align*}
		Concerning $L_7$, we observe that, under the assumption $\bP(\mathbf x)\mathbf B(\mathbf x,\cdot)=\mathbf B(\mathbf x,\cdot)\bP(\mathbf x)$ for any $\mathbf \mich{x}\in \Sigma$, by means of \eqref{controlb2}, since $t\leq T$, it holds
		\begin{align*}
			\normmm{L_7}&\rev{=\normmm{\int_0^t\int_{\Oe}\mathbf B\nabla V_\eps\cdot \nabla \widetilde u\dx\d s-\eps\int_0^t\int_{\Sigma}\mathbf B_{|\Sigma}\nabla_\Sigma \mM_\eps(V_\eps)\cdot \nabla_\Sigma u\d \mathcal H^{N-1}\d s}}\\&\leq C\eps^\frac32\norm{\mathbf B}_{C^{0,1}(\R^N\times[0,T])}\norm{V_\eps}_{L^2(0,t;H^1(\Omega_\eps))}\norm{\nabla_\Sigma u}_{L^2(0,T;L^2(\Sigma))}.  
		\end{align*}

		\rev{Then, about $L_8$, we have, integrating by parts on $\Sigma$ and using \eqref{ctrl1}, \eqref{trace}, \eqref{P1},
		\begin{align*}
			\normmm{L_8}&\leq \eps\normmm{\int_0^t\int_\Sigma \Div_\Sigma (\mathbf B_{|\Sigma})\cdot \nabla_\Sigma u\,\bigl(\mM_\eps(V_\eps)-V_\eps\bigr)\d\mathcal H^{N-1}\d s}\\&
			+\eps\normmm{\int_0^t\int_\Sigma \mathbf B_{|\Sigma} :\nabla_\Sigma^2 u\,\bigl(\mM_\eps(V_\eps)-V_\eps\bigr)\d\mathcal H^{N-1}\d s}\\&
			+\eps\normmm{\int_0^t\int_\Sigma H_\Sigma \mathbf B_{|\Sigma}\bn_\Sigma \cdot\nabla_\Sigma u\,\bigl(\mM_\eps(V_\eps)-V_\eps\bigr)\d\mathcal H^{N-1}\d s}\\&
			\leq \eps \int_0^t\norm{\Div_\Sigma \mathbf B}_{L^{\infty}(\Sigma)}\norm{\nabla_\Sigma u}_{L^{2}(\Sigma)}\norm{\mM_\eps(V_\eps)-V_\eps}_{L^2(\Sigma)}\d s\\&
			+\eps\int_0^t\norm{\mathbf B}_{L^\infty(\Sigma)}\norm{\nabla_\Sigma^2 u}_{L^2(\Sigma)}\norm{\mM_\eps(V_\eps)-V_\eps}_{L^2(\Sigma)}\d s\\&
			+C\eps\int_0^t\norm{\mathbf B}_{L^\infty(\Sigma)}\norm{\nabla_\Sigma u}_{L^2(\Sigma)}\norm{\mM_\eps(V_\eps)-V_\eps}_{L^2(\Sigma)}\d s\\&
			\leq C\eps^{\frac32}\int_0^t\norm{\mathbf B}_{W^{1,\infty}(\Oe)}\norm{u}_{H^2(\Sigma)}\norm{V_\eps}_{H^1(\Oe)}\d s\\&\quad +C\eps^\frac32\int_0^t\norm{\mathbf B}_{L^{\infty}(\Oe)}\norm{u}_{H^1(\Sigma)}\norm{V_\eps}_{H^1(\Oe)}\d s\\&
			\leq C\eps^\frac32\norm{\mathbf B}_{C^{0,1}(\R^N\times[0,T])}\norm{u}_{L^2(0,T;H^2(\Sigma))}\norm{V_\eps}_{L^2(0,t;H^1(\Oe))},
		\end{align*}
		where $H_\Sigma$ is the scalar mean curvature of $\Sigma$.}
		Then, we proceed with the estimate of $L_3$. In particular, we have by \eqref{trace} that
		\begin{align*}
			\normmm{L_3}&\leq \eps\int_0^t\norm{\partial_t u}_{L^2(\Sigma)}\norm{aV_\eps-\mM_\eps(aV_\eps)}_{L^2(\Sigma)}\d s\\&\leq C\eps^\frac32 \int_0^t\norm{\partial_t u}_{L^2(\Sigma)}\norm{a V_\eps}_{H^1(\Oe)}\d s\\&\leq C\eps^{\frac32}\norm{a}_{C^1_b(\R^N\times[0,T])}\norm{\partial_t u}_{L^2(0,T;L^2(\Sigma))}\norm{V_\eps}_{L^2(0,t;H^1(\Oe))}.
		\end{align*}
		Then, concerning $L_4$, again by \eqref{trace} we infer
		\begin{align*}
			\normmm{L_4}\leq \eps\norm{g}_{L^2(\Sigma)}\norm{\mM_\eps(V_\eps)-V_\eps}_{L^1(0,t;L^2(\Sigma))}\leq \mich{C}\eps^\frac32 T^\frac12\norm{g}_{L^2(\Sigma)}\norm{V_\eps}_{L^2(0,t;H^1(\Oe))}.
		\end{align*}
		About $L_5$, observe that, by \eqref{basic},
		\begin{align*}
			\eps\int_0^t\int_{\Sigma} g\,\mM_\eps(V_\eps)\d \mathcal H^{N-1}\d s=\int_0^t \int_{\Oe} \widetilde g\,V_\eps\dx\d s,    
		\end{align*}
		where we set $\widetilde g=\widetilde{\mathrm{Tr}_\Sigma g}$, and thus we have
		\begin{align*}
			\normmm{L_5}&\leq t^\frac12\norm{g-\widetilde g}_{L^\infty(0,t;L^2(\Oe))}\norm{V_\eps}_{L^2(0,t;L^2(\Oe))}\\&
			\leq CT^\frac12\eps^\frac12\norm{g-\widetilde g}_{L^\infty(\Oe)}\norm{V_\eps}_{L^2(0,t;L^2(\Oe))}\\&\leq CT^\frac12\eps^\frac32\norm{g}_{C^{0,1}(\overline\Omega_\delta)}\norm{V_\eps}_{L^2(0,t;L^2(\Oe))},
		\end{align*}
		where we have used that $\normmm{d}\leq C\eps$ in $\Omega_\eps$, $d$ being the signed distance function from $\Sigma$. In conclusion, concerning $L_6$, we have, by the change of variables \eqref{changesurf} and arguing as for \eqref{K4},
		\mich{\begin{align*}
			\normmm{L_6}&=\eps\normmm{\int_0^t\int_{\Sigma} {\op^\sharp(y,\eps)V_\eps^\sharp(y,\eps)} J(y,\eps)\d\mathcal H^{N-1}\d s-\int_0^t\int_{\Sigma} \op\, \mathrm{Tr}_\Sigma V_\eps\d \mathcal H^{N-1}\d s}\\&
            \leq \eps \normmm{\int_0^t\int_{\Sigma} {\op^\sharp(y,\eps)(V_\eps^\sharp(y,\eps)}-V_\eps(y)) J(y,\eps)\d\mathcal H^{N-1}\d s} 
            \\&\quad +\eps\normmm{\int_0^t\int_{\Sigma} {(\op^\sharp(y,\eps)-\op(y))V_\eps(y)} J(y,\eps)\d\mathcal H^{N-1}\d s}\\&
+\eps\normmm{\int_0^t\int_{\Sigma}\op_{|\Sigma}V_\eps(y)\bigl(1-\mich{J(y,\eps)}\bigr)\d \mathcal H^{N-1}\dt}\\&
				\leq CT^\onehalf\eps^\frac32\norm{V_\eps}_{L^2(0,t;H^1(\Omega_\eps))}\norm{\op}_{L^2(\Sigma_p)}+CT^\frac12\eps^2\mich{\norm{\op}_{C^{0,1}(\overline{\Omega}_\delta)}}\norm{V_\eps}_{L^2(0,t;L^2(\Sigma))}\\&\quad +CT^\frac12\eps^2{\norm{\op}_{L^2(\Sigma)}}\norm{V_\eps}_{L^2(0,t;L^2(\Sigma))}
			\\&\leq C(T)\eps^{\frac32}\norm{V_\eps}_{L^2(0,t;H^1(\Oe))},
		\end{align*}}
   where we also used \eqref{ctrltrace3} and
   \begin{align*}
       \int_\Sigma \normmm{V^\sharp (y,\eps)-V(y)}^2\d\mathcal H^{N-1}\leq \eps\int_\Sigma \int_0^\eps\normmm{\partial_r V^\sharp(y,r)}^2\d r\d\mathcal H^{N-1}\leq C\eps\norm{V_\eps}_{H^1(\Oe)}^2,
   \end{align*}
   using \eqref{controleta}.
        		
		Summing up, using Young's inequality we can thus obtain from \eqref{recast4} that 
		\begin{align*}
			&\frac{\eps}{2}\int_{\Omega_b} a(t) \normmm{V_\eps(t)}^2 \dx+\frac12\int_{\Oe}a(t)\normmm{V_\eps(t)}^2\dx+\eps \int_0^t\int_{\Omega_b}\mathbf D\nabla V_\eps\cdot\nabla V_\eps\dx\d s+\int_0^t\int_{\Oe}\mathbf B\nabla V_\eps\cdot \nabla V_\eps\dx\d s\\&
			\leq \frac{\eps}{2}\int_{\Omega_b} a(0) \normmm{V_\eps(0)}^2 \dx+\frac12\int_{\Oe}a(0)\normmm{V_\eps(0)}^2\dx\\&\quad + C(T)\int_0^t\int_{\Oe}a\normmm{V_\eps}^2\dx\d s+\frac12\int_0^t\int_{\Omega_\eps}\mathbf B\nabla V_\eps\cdot\nabla V_\eps \dx\d s\\&\quad +C(T)\eps^3\bigl(1+\norm{u}_{L^2(0,T;H^2(\Sigma))}+\norm{\partial_t u}_{L^2(0,T;L^2(\Sigma))}\bigr)\mich{^2}\\&
			\leq C\norm{V_\eps(0)}_{L^2(\Oe)}^2+C(T)\int_0^t\int_{\Oe}a\normmm{V_\eps}^2\dx\d s+\frac12\int_0^t\int_{\Oe}\mathbf B\nabla V_\eps\cdot\nabla V_\eps\dx\d s+C(T)\eps^3,
		\end{align*}
		for any $t\in(0,T]$, where we have also used \eqref{extrareg} and \eqref{Vzero}. This gives
		\begin{align*}
			&\frac{\eps}{2}\int_{\Omega_b} a(t) \normmm{V_\eps(t)}^2 \dx+\frac12\int_{\Oe}a(t)\normmm{V_\eps(t)}^2\dx\\&\quad +\eps \int_0^t\int_{\Omega_b}\mathbf D\nabla V_\eps\cdot\nabla V_\eps\dx\d s+\frac12\int_0^t\int_{\Oe}\mathbf B\nabla V_\eps\cdot\nabla V_\eps\dx\d s\\&
			\leq C(T)\int_0^t\int_{\Oe}a\normmm{V_\eps}^2\dx\d s+C\norm{V_\eps(0)}_{L^2(\Oe)}^2+C(T)\eps^3,\quad \forall t\in(0,T].
		\end{align*}
		As a consequence, by means of the integral Gronwall Lemma we finally deduce \eqref{rate2}, after dividing by $\eps$ and taking the square root.
\rev{Finally, in the more general case in which the identity $\bP \mathbf B=\mathbf B\bP$ on $\Sigma$ is not assumed, we can change the estimate of $L_7$. Indeed, by means of \eqref{final}, assuming $\eps\in(0,1)$ we can write
		\begin{align*}
			\normmm{L_7}&\leq C(\norm{\mathbf B}_{C^{0,1}(\R^N\times[0,T])})\eps^\frac32\norm{V_\eps}_{L^2(0,t;H^1(\Oe))}\norm{u}_{L^2(0,T;H^1(\Sigma))}\\&\quad +C\eps^\onehalf(\norm{\bP\mathbf B_{|\Sigma}\bn_\Sigma}_{L^\infty(\Sigma)}+\norm{\bn_\Sigma\mathbf B_{|\Sigma}\bP}_{W^{1,\infty}(\Sigma)})\norm{V_\eps}_{L^2(0,t;H^1(\Oe))}\norm{u}_{L^2(0,T;H^2(\Sigma))},
		\end{align*}
		which then gives, together with all the other estimates above, inequality \eqref{C2}.}
		This concludes the proof. 

		\section{Thermal Transmission Problem in a Simplified Film-Bulk Domain}
		\label{sec:simplified}
		
		We now investigate the thermal transmission problem introduced in Section~\ref{sec:problem} within a simplified film-bulk domain. Whereas Section~\ref{sec:thinfilm_analysis} establishes the general mathematical framework and the corresponding convergence results, the present section considers a simplified setting in which explicit analytical solutions can be derived.
		
		The objective is threefold: to analyze both the finite-thickness film configuration and the corresponding thin-film limit in a case where both admit a closed-form solution, to compare their thermal responses as the film thickness decreases, and to compare the convergence rate observed in this particular configuration with the one established in Theorem~\ref{thmrates}.
		\subsection{Simplified Film-Bulk Domain and Problem Formulation}
		\label{simply1}
		
		We consider a simplified two-dimensional rectangular film-bulk domain. It consists of a bulk region of height $H$ and length $L$, together with a film region of uniform thickness $\varepsilon$ attached to the upper boundary of the bulk region, the two being separated by the interface $\Sigma$.
		
		As illustrated in Figure~\ref{fig:simplified_domain}, the bulk region occupies the rectangle $(0,L)\times(0,H)$ and the film region the layer $(0,L)\times(H,H+\varepsilon)$, so that $\Sigma=(0,L)\times\{H\}$. The upper boundary of the film is denoted by $\Sigma_{\mathrm p}$, the lower boundary of the bulk by $\Sigma_v$, and the lateral boundaries by $\Sigma_{\mathrm q}$.
		
		\begin{figure}
			\centering		
            \includegraphics[width=0.5\textwidth]{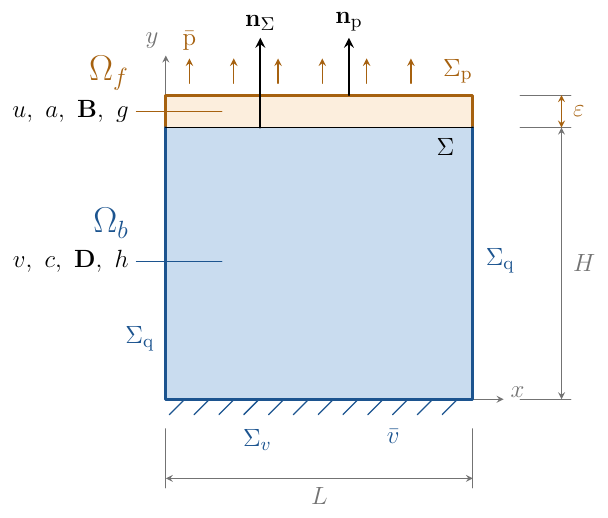}					\caption{Schematic representation of the simplified film-bulk domain used for the derivation of analytical solutions. The bulk region occupies the rectangle $(0,L)\times(0,H)$ and the film region the layer $(0,L)\times(H,H+\varepsilon)$ of uniform thickness $\varepsilon$, the two being separated by the interface $\Sigma=(0,L)\times\{H\}$. The heat flux $\op$ is prescribed on the upper boundary $\Sigma_{\mathrm p}$ of the film, a homogeneous Dirichlet condition on the lower boundary $\Sigma_v$ of the bulk, and homogeneous Neumann conditions on the lateral boundaries $\Sigma_{\mathrm q}$. Each region carries its temperature field, constant volumetric heat capacity and constant thermal conductivity: $u$, $a$, $B$ in the film and $v$, $c$, $D$ in the bulk.}
			\label{fig:simplified_domain}
		\end{figure}	
		Notice that this configuration does not fall within the geometric setting of Section~\ref{mat_sec1}, in which $\Sigma$ is a closed surface without boundary, the bulk region has no external boundary and $\Sigma_v$, $\Sigma_{\mathrm q}$ are empty: here $\Sigma$ is a segment with boundary and both $\Sigma_v$ and $\Sigma_{\mathrm q}$ are non-empty. The comparison carried out in Section~\ref{comparisons} is therefore to be understood as a consistency check in a degenerate configuration, rather than as an instance of Theorem~\ref{thmrates}.
		
		We assume that heat transfer occurs along the vertical direction only. Hence all fields depend on the vertical coordinate $y$ and on time $t$ alone, homogeneous Neumann conditions are imposed on the lateral boundaries $\Sigma_{\mathrm q}$, a homogeneous Dirichlet condition is imposed on $\Sigma_v$, and the diffusion operators reduce to second derivatives with respect to $y$. The remaining boundary and transmission conditions are obtained from those introduced in Section~\ref{sec:problem}, specialized to the present setting. 
		
		To obtain explicit analytical solutions, the volumetric heat capacities and the thermal conductivities are assumed to be constant. In particular, $a$ and $c$ denote the constant volumetric heat capacities in the film and bulk regions, respectively, while $B>0$ and $D>0$ denote the corresponding scalar thermal conductivities, so that the conductivity tensors of Section~\ref{sec:problem} reduce to $\mathbf B=B\,\mathbf I$ and $\mathbf D=D\,\mathbf I$. Volumetric source terms are neglected in both regions, in order to isolate the role of the transmission conditions, of the prescribed boundary heat flux and of the film thickness in the thermal response.
		
		Under these assumptions, the rescaled thermal transmission problem~\eqref{eps1} in the finite-thickness film configuration, with $\varepsilon>0$, reduces to
		\begin{align}
			\label{eq:finite_eps_problem_nonhom}
			\begin{cases}
				a\,\partial_t u - B\,\partial_{yy}u = 0
				& \text{in } H<y<H+\varepsilon,\\[4pt]
				c\,\partial_t v - D\,\partial_{yy}v = 0
				& \text{in } 0<y<H,\\[4pt]
				u=v
				& \text{on } y=H,\\[4pt]
				B\,\partial_y u=\varepsilon D\,\partial_y v
				& \text{on } y=H,\\[4pt]
				B\,\partial_y u =\varepsilon\,\op(t)
				& \text{on } y=H+\varepsilon,\\[4pt]
				v=0
				& \text{on } y=0,
			\end{cases}
		\end{align}
		supplemented with initial conditions $u(\cdot,0)=u_0$ in $(H,H+\varepsilon)$ and $v(\cdot,0)=v_0$ in $(0,H)$, with $u_0=v_0$ at $y=H$.
		
		The corresponding thin-film limit problem~\eqref{thinfilm11} reduces to
		\begin{align}
			\label{eq:limit_bulk_interface_nonhom}
			\begin{cases}
				c\,\partial_t v - D\,\partial_{yy}v = 0
				& \text{in } 0<y<H,\\[4pt]
				u=v
				& \text{on } y=H,\\[4pt]
				a\,\partial_t u=-D\,\partial_y v+\op(t)
				& \text{on } y=H,\\[4pt]
				v=0
				& \text{on } y=0,
			\end{cases}
		\end{align}
		supplemented with $v(\cdot,0)=v_0$ in $(0,H)$ and $u(0)=u_0:=v_0(H)$.
		In this simplified one-dimensional setting the tangential diffusion term appearing in the general bulk-interface formulation vanishes, since the interfacial temperature does not vary along the tangential direction, and the interfacial balance relation reduces to an ordinary differential equation for $u$.
		
		\subsection{Analytical Solutions in the Finite-Thickness Film Configuration and Thin-Film Limit}
		\label{analytic}
		The analysis aims to provide an explicit description of the thermal interaction between the film and the bulk regions, and to illustrate in a simplified setting the asymptotic behavior established in Section~\ref{sec:thinfilm_analysis}.
		
		Two representative boundary conditions are considered on the boundary $\Sigma_{\mathrm p}$ of the film region (see Figure~\ref{fig:simplified_domain}), namely the homogeneous Neumann condition $\op=0$ and the non-homogeneous Neumann condition $\op\neq0$. For each of them, analytical solutions are derived for both the finite-thickness film configuration and the corresponding thin-film limit.
		
		The derivation is based on separation of variables. In the homogeneous case this leads to an eigenvalue problem, whose solutions form a countable family of modes decaying exponentially in time, and the solution is a superposition of such modes. The present analysis retains the fundamental mode only, namely the one associated with the smallest eigenvalue, which governs the thermal response at large times, since the remaining modes decay more rapidly. In the non-homogeneous case no eigenvalue problem arises, and the decay rate is instead prescribed by the boundary forcing.
		
		Restricting the analysis to the fundamental mode is not a limitation for the present purposes. The aim is not to reconstruct the complete modal expansion, but to obtain closed-form expressions allowing a direct comparison between the two formulations; the fundamental mode already carries the whole interaction between the film and the bulk regions through the transmission conditions, which is precisely what the thin-film limit modifies. The comparison performed in Section~\ref{comparisons} is therefore to be understood between the fundamental modes of the two configurations.
		
		The resulting solutions are then compared in order to highlight the influence of the film thickness and the convergence of the finite-thickness film configuration toward the thin-film limit, and to compare the convergence rate observed in this setting with the one established in Theorem~\ref{thmrates}.
		
		\subsubsection{Homogeneous Neumann Condition ($\op = 0$)}
		\label{homogeneus}
		\paragraph{\emph{\textbf{Finite-Thickness Film Configuration}}}
		
		We first consider the homogeneous Neumann case, obtained from problem~\eqref{eq:finite_eps_problem_nonhom} by setting \(\op=0\). In the absence of a prescribed heat flux on the upper boundary of the film region, the thermal transmission problem reduces to an eigenvalue problem.
		
		To derive the fundamental modal response, we employ separation of variables and seek the temperature fields in the form
		\begin{align}
			u(y,t)=U(y)e^{-\lambda t},
			\qquad
			v(y,t)=V(y)e^{-\lambda t},
			\label{eq:sep_finite_hom}
		\end{align}
		where \(\lambda>0\) denotes the modal decay rate, while \(U\) and \(V\) describe the spatial distributions of temperature within the film and the bulk regions, respectively.
		
		Substituting \eqref{eq:sep_finite_hom} into the governing equations of problem~\eqref{eq:finite_eps_problem_nonhom} yields
		\[
		-a\lambda U-BU''=0,
		\qquad
		-c\lambda V-DV''=0,
		\]
		that is, introducing the wave numbers
		\[
		\alpha^2=\frac{a\lambda}{B},
		\qquad
		\beta^2=\frac{c\lambda}{D},
		\]
		\[
		U''+\alpha^2U=0,
		\qquad
		V''+\beta^2V=0,
		\]
		whose general solutions are
		\[
		U(y)=A_1\cos(\alpha y)+A_2\sin(\alpha y),
		\qquad
		V(y)=C_1\cos(\beta y)+C_2\sin(\beta y),
		\]
		with \(A_1\), \(A_2\), \(C_1\) and \(C_2\) integration constants.
		
		We begin by imposing the Dirichlet condition at the lower boundary of the bulk region. Since \(V(0)=0\), it follows that \(C_1=0\), and therefore
		\[
		V(y)=C\sin(\beta y),
		\qquad C=C_2 .
		\]
		
		Next, we impose the homogeneous Neumann condition at the upper boundary of the film region,
		\[
		U'(H+\varepsilon)=0.
		\]
		Rather than enforcing this condition on the general trigonometric representation, it is convenient to incorporate it directly into the functional form of the solution, by means of the shifted representation
		\[
		U(y)
		=
		A\cos\!\bigl(\alpha(y-H-\varepsilon)\bigr).
		\]
		This entails no loss of generality, since the functions satisfying $U''+\alpha^2U=0$ together with $U'(H+\varepsilon)=0$ form a one-dimensional space, which the above representation spans. Indeed, differentiation gives
		\[
		U'(y)
		=
		-A\alpha
		\sin\!\bigl(\alpha(y-H-\varepsilon)\bigr),
		\]
		so that \(U'(H+\varepsilon)=-A\alpha\sin(0)=0\) identically.
		
		The remaining unknown coefficients are determined from the transmission conditions at the interface \(y=H\). The continuity of temperature requires \(U(H)=V(H)\), which yields
		\[
		A\cos(\alpha\varepsilon)
		=
		C\sin(\beta H),
		\]
		whereas the rescaled heat-flux balance condition reads \(BU'(H)=\varepsilon DV'(H)\). Using
		\[
		U'(H)
		=
		A\alpha\sin(\alpha\varepsilon),
		\qquad
		V'(H)
		=
		C\beta\cos(\beta H),
		\]
		we obtain
		\[
		BA\alpha\sin(\alpha\varepsilon)
		=
		\varepsilon DC\beta\cos(\beta H).
		\]
		
		The transmission conditions therefore generate a homogeneous linear system for the amplitudes \(A\) and \(C\). A non-trivial solution exists only if the determinant of the associated coefficient matrix vanishes, which gives
		\begin{align}
			B\alpha\sin(\alpha\varepsilon)\sin(\beta H)
			-
			\varepsilon D\beta
			\cos(\alpha\varepsilon)\cos(\beta H)
			=
			0.
			\label{eq:char_finite_hom_det}
		\end{align}
		Since both $\alpha$ and $\beta$ depend on $\lambda$, equation~\eqref{eq:char_finite_hom_det} is a transcendental equation for the modal decay rate, and constitutes the characteristic equation of the transmission problem. Whenever \(\cos(\alpha\varepsilon)\sin(\beta H)\neq0\), it can be written in the equivalent form
		\begin{align}
			B\alpha\tan(\alpha\varepsilon)
			=
			\varepsilon D\beta\cot(\beta H).
			\label{eq:char_finite_hom}
		\end{align}
		
		The admissible eigenvalues correspond to the strictly positive roots of \eqref{eq:char_finite_hom_det}. Since the present analysis focuses on the dominant thermal behavior, we retain only the principal eigenvalue, namely the smallest such root, denoted by \(\lambda_1^\varepsilon\), with associated wave numbers
		\[
		\alpha_1^\varepsilon
		=
		\sqrt{\frac{a\lambda_1^\varepsilon}{B}},
		\qquad
		\beta_1^\varepsilon
		=
		\sqrt{\frac{c\lambda_1^\varepsilon}{D}}.
		\]
		The continuity condition provides the relation between the amplitudes,
		\[
		C
		=
		A
		\frac{\cos(\alpha_1^\varepsilon\varepsilon)}
		{\sin(\beta_1^\varepsilon H)},
		\]
		allowing the solution to be expressed in terms of a single amplitude coefficient, which we denote by \(A_1^\varepsilon\). The fundamental-mode solution therefore takes the form
		\begin{align}
			u_\varepsilon(y,t)
			&=
			A_1^\varepsilon
			\cos\!\Bigl(
			\alpha_1^\varepsilon
			(y-H-\varepsilon)
			\Bigr)
			e^{-\lambda_1^\varepsilon t},
			\qquad
			H<y<H+\varepsilon,
			\label{eq:u_finite_hom}
			\\[4pt]
			v_\varepsilon(y,t)
			&=
			A_1^\varepsilon
			\frac{\cos(\alpha_1^\varepsilon\varepsilon)}
			{\sin(\beta_1^\varepsilon H)}
			\sin(\beta_1^\varepsilon y)
			e^{-\lambda_1^\varepsilon t},
			\qquad
			0<y<H,
			\label{eq:v_finite_hom}
		\end{align}
		where \(A_1^\varepsilon\) is determined by the projection of the initial temperature distribution onto the principal eigenfunction. For the analytical comparisons considered in the following sections, \(A_1^\varepsilon\) may be prescribed independently, since it only affects the overall amplitude of the solution and does not influence either the modal decay rate \(\lambda_1^\varepsilon\) or the associated spatial eigenfunctions.
		
		\paragraph{\emph{\textbf{Thin-Film Limit}}}
		
		We now consider the corresponding thin-film limit, obtained from problem~\eqref{eq:limit_bulk_interface_nonhom} by setting \(\op=0\). In contrast to the finite-thickness film configuration discussed above, the film region no longer occupies a volumetric domain: it has collapsed onto the interface \(y=H\), and the unknown \(u\) therefore represents an interfacial temperature rather than a volumetric temperature field. Consistently with Section~\ref{sec:thinfilm_analysis}, the solution of the thin-film limit problem is denoted by \((u,v)\) without subscript, whereas the subscript \(\varepsilon\) is reserved for the finite-thickness film configuration.
		
		As in the previous case, we derive the fundamental modal response by separation of variables. Since the interfacial temperature depends on time only, we seek the bulk and interface temperatures in the form
		\begin{align}
			v(y,t)=V(y)e^{-\lambda t},
			\qquad
			u(t)=Ue^{-\lambda t},
			\label{eq:sep_limit_hom}
		\end{align}
		where \(\lambda>0\) denotes the modal decay rate and \(U\) is the amplitude associated with the interface mode.
		
		The bulk equation of problem~\eqref{eq:limit_bulk_interface_nonhom} is the same as in the finite-thickness configuration, so that \(V''+\beta^2V=0\) with \(\beta^2=c\lambda/D\), and the Dirichlet condition \(V(0)=0\) again yields
		\[
		V(y)=C\sin(\beta y).
		\]
		
		The continuity condition at the interface, \(u(t)=v(H,t)\), provides a direct relation between the interface and the bulk amplitudes,
		\[
		U=V(H)=C\sin(\beta H),
		\]
		while the remaining condition is provided by the interfacial balance relation
		\[
		a\,\partial_t u(t)
		=
		-D\,\partial_yv(H,t).
		\]
		Using the separated representation \eqref{eq:sep_limit_hom}, this becomes
		\[
		-a\lambda Ue^{-\lambda t}
		=
		-DV'(H)e^{-\lambda t},
		\]
		and, after cancellation of the non-vanishing exponential factor and recalling that \(V'(H)=C\beta\cos(\beta H)\),
		\[
		a\lambda C\sin(\beta H)
		=
		DC\beta\cos(\beta H).
		\]
		
		A non-trivial modal solution requires \(C\neq0\). Consequently, the admissible modal decay rates satisfy
		\begin{align}
			a\lambda\sin(\beta H)
			-
			D\beta\cos(\beta H)
			=
			0,
			\qquad
			\beta=\sqrt{\frac{c\lambda}{D}},
			\label{eq:char_limit_hom_det}
		\end{align}
		which constitutes the characteristic equation associated with the thin-film limit problem. Whenever \(\sin(\beta H)\neq0\), it can be rewritten in the equivalent form
		\begin{align}
			a\lambda
			=
			D\beta\cot(\beta H).
			\label{eq:char_limit_hom}
		\end{align}
		
		It is worth observing that \eqref{eq:char_limit_hom} is recovered from \eqref{eq:char_finite_hom} in the limit of a vanishing film thickness, and that the discrepancy between the two is of second order. Indeed, expanding \(\tan(\alpha\varepsilon)=\alpha\varepsilon+\frac13(\alpha\varepsilon)^3+O(\varepsilon^5)\) in \eqref{eq:char_finite_hom}, dividing by \(\varepsilon\) and recalling that \(B\alpha^2=a\lambda\), one obtains
		\begin{align}
			a\lambda+\frac{a^2\lambda^2}{3B}\,\varepsilon^2+O(\varepsilon^4)
			=
			D\beta\cot(\beta H).
			\label{eq:char_expansion}
		\end{align}
		The characteristic equation of the finite-thickness configuration is therefore an $O(\varepsilon^2)$ perturbation of the one associated with the thin-film limit. Since $\lambda_1^0$ is a simple root of \eqref{eq:char_limit_hom_det}, the implicit function theorem applies and gives
		\begin{align}
			\lambda_1^\varepsilon=\lambda_1^0+O(\varepsilon^2),
			\label{eq:lambda_expansion}
		\end{align}
		a fact that will be used in Section~\ref{comparisons} to explain the convergence rate observed there.
		
		The admissible eigenvalues correspond to the strictly positive roots of \eqref{eq:char_limit_hom_det}. Consistently with the analysis of the finite-thickness film configuration, we retain only the principal eigenvalue, namely the smallest such root, denoted by \(\lambda_1^0\), with associated wave number
		\[
		\beta_1^0
		=
		\sqrt{\frac{c\lambda_1^0}{D}}.
		\]
		Using the continuity condition, the bulk amplitude can be expressed in terms of the interface amplitude as \(C=U/\sin(\beta_1^0 H)\). Denoting by \(A_1^0\) the amplitude of the fundamental interface mode, the corresponding analytical solution takes the form
		\begin{align}
			v(y,t)
			&=
			A_1^0
			\frac{\sin(\beta_1^0 y)}
			{\sin(\beta_1^0 H)}
			e^{-\lambda_1^0 t},
			\qquad
			0<y<H,
			\label{eq:v_limit_hom}
			\\[4pt]
			u(t)
			&=
			A_1^0
			e^{-\lambda_1^0 t}.
			\label{eq:u_limit_hom}
		\end{align}
		The coefficient \(A_1^0\) is determined by the projection of the initial temperature distribution onto the principal eigenmode. As above, for the analytical comparisons performed in the following sections \(A_1^0\) may be prescribed independently, since it only scales the overall magnitude of the thermal response and affects neither the modal decay rate \(\lambda_1^0\) nor the associated eigenfunctions.
        \subsubsection{Non-Homogeneous Neumann Condition ($\op \neq 0$)}
		\label{nonhomogeneus}
		
		\paragraph{\emph{\textbf{Finite-Thickness Film Configuration}}}
		
		We now consider the non-homogeneous Neumann case, obtained from problem~\eqref{eq:finite_eps_problem_nonhom} by prescribing a non-vanishing heat flux on the upper boundary of the film region.
		
		In contrast to the homogeneous case discussed in Section~\ref{homogeneus}, the thermal transmission problem is no longer homogeneous, and a direct application of the eigenvalue analysis developed above is not possible. To preserve the separated representation adopted throughout the present section and to obtain an analytical solution in closed form, we assume that the prescribed heat flux has the exponentially decaying form
		\begin{align}
			\op(t)
			=
			\op_0 e^{-\lambda t},
			\label{eq:flux_nonhom}
		\end{align}
		where \(\op_0\) denotes the prescribed heat-flux amplitude and \(\lambda>0\) a prescribed decay rate. Unlike the homogeneous case, the parameter \(\lambda\) is not determined by the transmission problem itself, but is imposed through the boundary forcing.
		
		Accordingly, we seek the temperature fields in the form
		\begin{align}
			u(y,t)=U(y)e^{-\lambda t},
			\qquad
			v(y,t)=V(y)e^{-\lambda t},
			\label{eq:sep_finite_nonhom}
		\end{align}
		which produces the forced response associated with \eqref{eq:flux_nonhom}. The general solution of the problem is obtained by adding to it a superposition of the homogeneous modes of Section~\ref{homogeneus}, determined by the initial datum; such modes decay like $e^{-\lambda_n^\varepsilon t}$ and are therefore negligible at large times whenever $\lambda<\lambda_1^\varepsilon$, which is the regime considered in Section~\ref{comparisons}.
		
		The governing equations reduce, exactly as in Section~\ref{homogeneus}, to
		\[
		U''+\alpha^2U=0,
		\qquad
		V''+\beta^2V=0,
		\qquad
		\alpha^2=\frac{a\lambda}{B},
		\qquad
		\beta^2=\frac{c\lambda}{D},
		\]
		and the Dirichlet condition \(V(0)=0\) again yields \(V(y)=C\sin(\beta y)\). Note that, in the present case, both wave numbers are known, since \(\lambda\) is prescribed.
		
		Next, we impose the prescribed heat-flux condition at the upper boundary of the film region,
		\[
		B\,\partial_yu(H+\varepsilon,t)
		=
		\varepsilon \op_0 e^{-\lambda t},
		\qquad\text{that is,}\qquad
		B\,U'(H+\varepsilon)
		=
		\varepsilon \op_0 .
		\]
		Since the condition is now non-homogeneous, both trigonometric functions are needed, and we adopt the shifted representation
		\[
		U(y)
		=
		A\cos\!\bigl(\alpha(y-H-\varepsilon)\bigr)
		+
		E\sin\!\bigl(\alpha(y-H-\varepsilon)\bigr),
		\]
		whose derivative is
		\[
		U'(y)
		=
		-\alpha A
		\sin\!\bigl(\alpha(y-H-\varepsilon)\bigr)
		+
		\alpha E
		\cos\!\bigl(\alpha(y-H-\varepsilon)\bigr).
		\]
		Evaluating at \(y=H+\varepsilon\) gives \(B\alpha E=\varepsilon \op_0\), so that
		\[
		U(y)
		=
		A\cos\!\bigl(\alpha(y-H-\varepsilon)\bigr)
		+
		\frac{\varepsilon \op_0}{B\alpha}
		\sin\!\bigl(\alpha(y-H-\varepsilon)\bigr).
		\]
		
		The remaining amplitudes are determined from the transmission conditions at the interface \(y=H\). The continuity of temperature, \(U(H)=V(H)\), yields
		\[
		A\cos(\alpha\varepsilon)
		- C\sin(\beta H)
		=
		\frac{\varepsilon \op_0}{B\alpha}
		\sin(\alpha\varepsilon),
		\]
		whereas the rescaled heat-flux balance condition \(BU'(H)=\varepsilon DV'(H)\), together with
		\[
		U'(H)
		=
		A\alpha\sin(\alpha\varepsilon)
		+
		\frac{\varepsilon \op_0}{B}
		\cos(\alpha\varepsilon),
		\qquad
		V'(H)
		=
		C\beta\cos(\beta H),
		\]
		yields
		\[
		B\alpha A\sin(\alpha\varepsilon)
		-
		\varepsilon DC\beta\cos(\beta H)
		=
		-\varepsilon \op_0\cos(\alpha\varepsilon).
		\]
		
		In contrast to the homogeneous case, the transmission conditions now define a non-homogeneous linear system for the amplitudes \(A\) and \(C\), and no characteristic equation arises from a solvability condition: the decay rate is prescribed through \eqref{eq:flux_nonhom}, while the transmission conditions determine the amplitudes. 	 Up to a sign, the determinant of the system is
		\begin{align}
			\Delta_\varepsilon
			:=
			\varepsilon D\beta\cos(\alpha\varepsilon)\cos(\beta H)
			-
			B\alpha\sin(\alpha\varepsilon)\sin(\beta H),
			\label{eq:det_nonhom}
		\end{align}
		that is, the left-hand side of the characteristic equation \eqref{eq:char_finite_hom_det} of the homogeneous problem. The system is therefore uniquely solvable if and only if the prescribed decay rate \(\lambda\) is not an eigenvalue of the homogeneous problem, a resonance condition that will be taken into account in Section~\ref{comparisons}. Under this assumption, Cramer's rule gives
		\begin{align}
			C_\varepsilon
			=
			\frac{\varepsilon \op_0}{\Delta_\varepsilon},
			\qquad
			A_\varepsilon
			=
			C_\varepsilon\left[
			\cos(\alpha\varepsilon)\sin(\beta H)
			+
			\frac{\varepsilon D\beta}{B\alpha}
			\sin(\alpha\varepsilon)\cos(\beta H)
			\right].
			\label{eq:amplitudes_finite_nonhom}
		\end{align}
		
		The analytical solution corresponding to the prescribed heat flux \eqref{eq:flux_nonhom} therefore takes the form
		\begin{align}
			u_\varepsilon(y,t)
			&=
			\left[
			A_\varepsilon
			\cos\!\bigl(\alpha(y-H-\varepsilon)\bigr)
			+
			\frac{\varepsilon \op_0}{B\alpha}
			\sin\!\bigl(\alpha(y-H-\varepsilon)\bigr)
			\right]
			e^{-\lambda t},
			\qquad
			H<y<H+\varepsilon,
			\label{eq:u_finite_nonhom}
			\\[4pt]
			v_\varepsilon(y,t)
			&=
			C_\varepsilon
			\sin(\beta y)
			e^{-\lambda t},
			\qquad
			0<y<H,
			\label{eq:v_finite_nonhom}
		\end{align}
		with \(A_\varepsilon\) and \(C_\varepsilon\) given by \eqref{eq:amplitudes_finite_nonhom}. The temporal decay of the solution is thus dictated by the imposed boundary forcing, while the transmission conditions determine the corresponding spatial amplitudes.
		
		\paragraph{\emph{\textbf{Thin-Film Limit}}}
		
		We now consider the corresponding non-homogeneous Neumann case in the thin-film limit, obtained from problem~\eqref{eq:limit_bulk_interface_nonhom} by prescribing a non-vanishing heat flux on the interface. Consistently with the finite-thickness film configuration, the prescribed heat flux is again assumed to take the form \eqref{eq:flux_nonhom}, so that the decay rate \(\lambda\) is prescribed by the boundary forcing rather than determined by the problem.
		
		Since the film has collapsed onto the interface \(y=H\), the unknown \(u\) represents an interfacial temperature and therefore depends on time only, and we seek
		\begin{align}
			v(y,t)=V(y)e^{-\lambda t},
			\qquad
			u(t)=Ue^{-\lambda t},
			\label{eq:sep_limit_nonhom}
		\end{align}
		where \(U\) denotes the amplitude associated with the interface temperature. As in Section~\ref{homogeneus}, the bulk equation reduces to \(V''+\beta^2V=0\) with \(\beta^2=c\lambda/D\), and the Dirichlet condition \(V(0)=0\) yields \(V(y)=C\sin(\beta y)\).
		
		The continuity condition at the interface, \(u(t)=v(H,t)\), gives \(U=C_0\sin(\beta H)\), while the interfacial balance relation reads
		\[
		a\,\partial_tu(t)
		=
		-D\,\partial_yv(H,t)
		+
		\op_0e^{-\lambda t}.
		\]
		Substituting \eqref{eq:sep_limit_nonhom} and cancelling the non-vanishing exponential factor, we obtain \(a\lambda U=DV'(H)-\op_0\), and hence, recalling \(V'(H)=C\beta\cos(\beta H)\) and the continuity relation,
		\[
		a\lambda C_0\sin(\beta H)
		=
		DC_0\beta\cos(\beta H)
		-
		\op_0 .
		\]
		The bulk and interface amplitudes are therefore given by
		\begin{align}
			C_0
			=
			\frac{\op_0}
			{D\beta\cos(\beta H)-a\lambda\sin(\beta H)},
			\qquad
			U
			=
			C_0\sin(\beta H),
			\label{eq:C_limit_nonhom}
		\end{align}
		provided that the denominator does not vanish, that is, provided that \(\lambda\) is not an eigenvalue of the homogeneous thin-film limit problem, as follows by comparison with \eqref{eq:char_limit_hom_det}. The analytical solution corresponding to the prescribed heat flux therefore takes the form
		\begin{align}
			v(y,t)
			&=
			\frac{\op_0\,\sin(\beta y)}
			{D\beta\cos(\beta H)-a\lambda\sin(\beta H)}
			e^{-\lambda t},
			\qquad
			0<y<H,
			\label{eq:v_limit_nonhom}
			\\[4pt]
			u(t)
			&=
			\frac{\op_0\,\sin(\beta H)}
			{D\beta\cos(\beta H)-a\lambda\sin(\beta H)}
			e^{-\lambda t}.
			\label{eq:u_limit_nonhom}
		\end{align}
		
		It is worth observing that, at variance with the homogeneous case of Section~\ref{homogeneus}, here the two configurations share the same decay rate \(\lambda\) and hence the same wave number \(\beta\), so that the whole discrepancy between them is carried by the amplitudes. Expanding \eqref{eq:det_nonhom} for small \(\varepsilon\) and recalling that \(B\alpha^2=a\lambda\), one obtains
		\[
		\Delta_\varepsilon
		=
		\varepsilon\bigl[D\beta\cos(\beta H)-a\lambda\sin(\beta H)\bigr]
		+O(\varepsilon^3),
		\]
		so that the bulk amplitude \eqref{eq:amplitudes_finite_nonhom} of the finite-thickness configuration reduces to the one in \eqref{eq:C_limit_nonhom} as the film thickness vanishes, with
		\begin{align}
			C_\varepsilon=C_0+O(\varepsilon^2),
			\qquad
			A_\varepsilon=U+O(\varepsilon^2).
			\label{eq:amplitude_expansion}
		\end{align} This second-order behavior will be used in Section~\ref{comparisons}, together with \eqref{eq:lambda_expansion}, to explain the convergence rates observed there.
		\subsubsection{Comparison and Convergence Analysis}
		\label{comparisons}
		
		The analytical solutions derived in Sections~\ref{homogeneus} and~\ref{nonhomogeneus} allow a direct comparison between the finite-thickness film configuration and the corresponding thin-film limit. Such a comparison provides insight into the dimensional reduction process established theoretically in Section~\ref{sec:thinfilm_analysis} and clarifies how the thermal contribution of the film evolves as the thickness tends to zero. Throughout this section the comparison is understood between the fundamental modes of the two configurations, which we collect here for convenience.
		
		For the homogeneous Neumann condition, the fundamental-mode solutions associated with the finite-thickness film configuration are
		\begin{align}
			u_\varepsilon(y,t)
			&=
			A_1^\varepsilon
			\cos\!\bigl(
			\alpha_1^\varepsilon
			(y-H-\varepsilon)
			\bigr)
			e^{-\lambda_1^\varepsilon t},
			\qquad
			H<y<H+\varepsilon,
			\tag{\ref{eq:u_finite_hom}}
			\\[4pt]
			v_\varepsilon(y,t)
			&=
			A_1^\varepsilon
			\frac{\cos(\alpha_1^\varepsilon\varepsilon)}
			{\sin(\beta_1^\varepsilon H)}
			\sin(\beta_1^\varepsilon y)
			e^{-\lambda_1^\varepsilon t},
			\qquad
			0<y<H,
			\tag{\ref{eq:v_finite_hom}}
		\end{align}
		whereas the corresponding thin-film limit solutions are
		\begin{align}
			u(t)
			&=
			A_1^0 e^{-\lambda_1^0 t},
			\tag{\ref{eq:u_limit_hom}}
			\\[4pt]
			v(y,t)
			&=
			A_1^0
			\frac{\sin(\beta_1^0 y)}
			{\sin(\beta_1^0 H)}
			e^{-\lambda_1^0 t},
			\qquad
			0<y<H.
			\tag{\ref{eq:v_limit_hom}}
		\end{align}
		
		For the non-homogeneous Neumann condition, the finite-thickness film configuration yields
		\begin{align}
			u_\varepsilon(y,t)
			&=
			\left[
			A_\varepsilon
			\cos\!\bigl(
			\alpha(y-H-\varepsilon)
			\bigr)
			+
			\frac{\varepsilon\op_0}{B\alpha}
			\sin\!\bigl(
			\alpha(y-H-\varepsilon)
			\bigr)
			\right]
			e^{-\lambda t},
			\qquad
			H<y<H+\varepsilon,
			\tag{\ref{eq:u_finite_nonhom}}
			\\[4pt]
			v_\varepsilon(y,t)
			&=
			C_\varepsilon\sin(\beta y)e^{-\lambda t},
			\qquad
			0<y<H,
			\tag{\ref{eq:v_finite_nonhom}}
		\end{align}
		with $A_\varepsilon$ and $C_\varepsilon$ given by \eqref{eq:amplitudes_finite_nonhom}, whereas the corresponding thin-film limit is
		\begin{align}
			u(t)
			&=
			\frac{\op_0\sin(\beta H)}
			{D\beta\cos(\beta H)-a\lambda\sin(\beta H)}
			e^{-\lambda t},
			\tag{\ref{eq:u_limit_nonhom}}
			\\[4pt]
			v(y,t)
			&=
			\frac{\op_0\sin(\beta y)}
			{D\beta\cos(\beta H)-a\lambda\sin(\beta H)}
			e^{-\lambda t},
			\qquad
			0<y<H.
			\tag{\ref{eq:v_limit_nonhom}}
		\end{align}
		
		The two cases differ in an essential respect. In the homogeneous case the decay rates $\lambda_1^\varepsilon$ and $\lambda_1^0$ are determined by the characteristic equations \eqref{eq:char_finite_hom_det} and \eqref{eq:char_limit_hom_det}, and therefore differ between the two configurations, together with the associated wave numbers and amplitudes. In the non-homogeneous case, instead, the decay rate $\lambda$ is prescribed by the boundary forcing and is the same in both configurations, so that the whole discrepancy is carried by the amplitudes. The two mechanisms through which the finite-thickness configuration approaches its thin-film limit are thus different, and are quantified by \eqref{eq:lambda_expansion} and \eqref{eq:amplitude_expansion}, respectively.
		
		Several qualitative observations emerge directly from the analytical expressions. In the finite-thickness film configuration, the temperature field within the film is a volumetric eigenfunction possessing a non-trivial spatial structure across the film thickness, characterized by the additional wave number $\alpha$, which is absent from the reduced formulation. Conversely, in the thin-film limit the film no longer possesses an internal spatial coordinate, and its thermal contribution is entirely represented through the interface temperature. The dimensional reduction therefore transforms the volumetric thermal response of the film into an effective interfacial contribution, while preserving the coupling with the bulk region.
		
		Although the film contribution undergoes a substantial structural modification, the bulk temperature field retains the same sinusoidal spatial dependence in both formulations. The differences arise through the interface coupling conditions, which determine the admissible modal decay rates and the associated amplitudes.
		
		It is also worth observing how the volumetric structure of the film disappears in the limit. Since $\alpha\varepsilon=O(\varepsilon)$, expanding \eqref{eq:u_finite_hom} across the film thickness gives
		\[
		u_\varepsilon(y,t)
		=
		A_1^\varepsilon\Bigl(1+O(\varepsilon^2)\Bigr)e^{-\lambda_1^\varepsilon t},
		\qquad
		H<y<H+\varepsilon,
		\]
		so that the temperature profile within the film becomes uniform at second order in the thickness, and the film variable degenerates into a single interfacial value. The same holds for the non-homogeneous case, where the additional contribution proportional to $\sin(\alpha(y-H-\varepsilon))$ in \eqref{eq:u_finite_nonhom} is itself of order $\varepsilon^2$, its amplitude being $O(\varepsilon)$ and its argument $O(\varepsilon)$ as well.
		
		The analytical solutions therefore provide a direct realization of the asymptotic reduction process established in Section~\ref{sec:thinfilm_analysis}: as the film thickness decreases, the volumetric thermal response within the film progressively collapses onto the interface and the finite-thickness solution approaches the reduced bulk-interface formulation.

		\paragraph{\emph{\textbf{Temperature profiles}}}
		
		To illustrate the asymptotic behavior discussed above, the analytical solutions are evaluated for progressively decreasing values of the film thickness and compared with the corresponding thin-film limit solutions.
		
		For the numerical illustrations, the geometric and material parameters are normalized according to
		\(
		H=1,\ a=c=B=D=1,
		\)
		so that the assumption $c=a$ adopted in Section~\ref{mat_sec4} is satisfied. For the homogeneous case, the principal eigenvalues are computed from the characteristic equations \eqref{eq:char_finite_hom_det} and \eqref{eq:char_limit_hom_det}; in the thin-film limit the latter reduces to $\beta\tan(\beta H)=1$, which yields
		$\beta_1^0\simeq0.860334$ and $\lambda_1^0\simeq0.740174$.
		The amplitudes are normalized by setting
		\(
		A_1^\varepsilon=A_1^0=1;
		\)
		this is a choice of normalization, and the fact that it makes the initial data of the two configurations differ is addressed at the end of the present section.
		For the non-homogeneous case, the prescribed heat-flux amplitude is chosen as
		\(
		\op_0=1,
		\)
		the remaining amplitudes being then determined by \eqref{eq:amplitudes_finite_nonhom} and \eqref{eq:C_limit_nonhom}, and the decay rate is prescribed as
		\(
		\lambda=0.25\,\lambda_1^0\simeq0.185.
		\)
		This value is smaller than $\lambda_1^\varepsilon$ for every thickness considered below, so that it is bounded away from the resonances of both configurations, as required in Section~\ref{nonhomogeneus}, and the forced response dominates the homogeneous modes at the time at which the profiles are evaluated.
		
		The temperature profiles are evaluated at the representative time instant \(t=0.15\), for the three film thicknesses
		\(
		\varepsilon=1,\,0.5,\,0.01 .
		\)
		The first two values are deliberately well outside the asymptotic regime, the film being there comparable in size with the bulk region, and are included in order to display the departure from the reduced model; only the third one is representative of a thin film.
		
		Figures~\ref{fig:hom_profiles} and~\ref{fig:nonhom_profiles} show the temperature distributions obtained for the homogeneous and the non-homogeneous Neumann conditions, respectively. In both of them the height $y$ is reported on the vertical axis, consistently with the geometry of Figure~\ref{fig:simplified_domain}.
		
		\begin{figure}[htbp]
			\centering
			\includegraphics[width=0.72\textwidth]{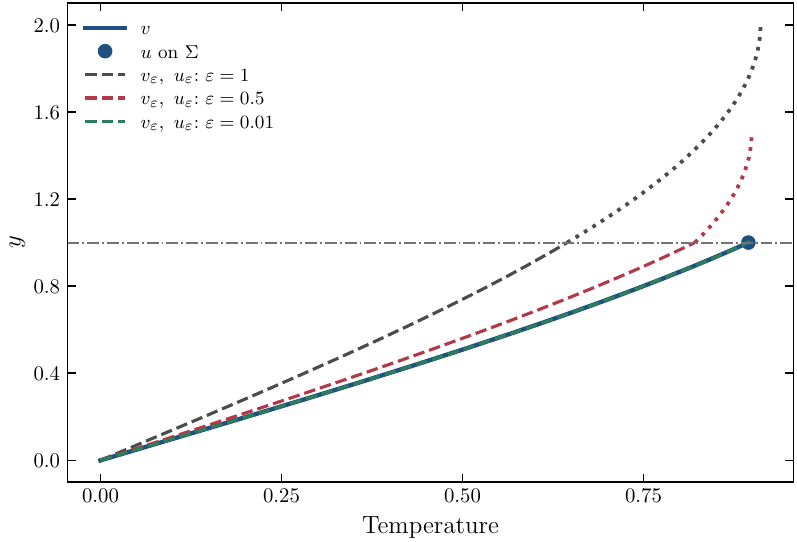}
			\caption{Homogeneous Neumann condition ($\op=0$). Temperature profiles at $t=0.15$ for the thin-film limit, shown as a solid line in the bulk region together with a marker on the interface, and for the finite-thickness film configuration at three values of the film thickness, shown as dashed lines in the bulk region and dotted lines in the film region. The height $y$ is reported on the vertical axis, so that the film occupies the strip above the dash-dotted line marking the interface $y=H$. As $\varepsilon$ decreases the film profile flattens and the bulk profile approaches the reduced solution.}
			\label{fig:hom_profiles}
		\end{figure}
		
		Figure~\ref{fig:hom_profiles} illustrates the homogeneous Neumann case. For the two largest thicknesses, noticeable differences are observed between the finite-thickness and the thin-film solutions, both within the film region and in the bulk domain; at the interface, the finite-thickness temperature falls below the limiting value $u$ by about $28\%$ for $\varepsilon=1$ and by about $8\%$ for $\varepsilon=0.5$. As the film thickness decreases the finite-thickness profiles progressively approach the reduced solution, and for $\varepsilon=0.01$ the two become indistinguishable on the scale of the figure.
		
		The convergence of the bulk solution is accompanied by a progressive flattening of the temperature distribution within the film, in agreement with the expansion of \eqref{eq:u_finite_hom} obtained above: the temperature variation across the film thickness is $A_1^\varepsilon\bigl(1-\cos(\alpha_1^\varepsilon\varepsilon)\bigr)=O(\varepsilon^2)$, so that the volumetric structure of the film is lost at second order. This behavior anticipates the transition toward the interface description characterizing the thin-film limit.
		
		\begin{figure}[htbp]
			\centering
			\includegraphics[width=0.72\textwidth]{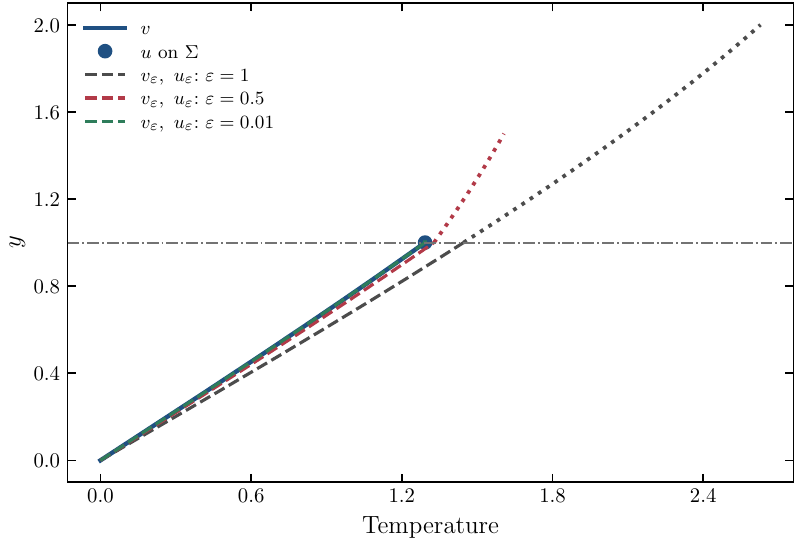}
			\caption{Non-homogeneous Neumann condition ($\op\neq0$). Temperature profiles at $t=0.15$ for the thin-film limit and for the finite-thickness film configuration, with the same line conventions as in Figure~\ref{fig:hom_profiles}. The prescribed heat flux produces a marked temperature gradient across the film, which progressively vanishes as the film thickness decreases.}
			\label{fig:nonhom_profiles}
		\end{figure}
		
		Figure~\ref{fig:nonhom_profiles} presents the corresponding comparison for the non-homogeneous Neumann condition. The prescribed heat flux sustains a temperature increase across the film region, which is therefore no longer isothermal: for the largest thicknesses the film exhibits a pronounced gradient, whereas the bulk temperature stays considerably closer to the corresponding thin-film solution than in the homogeneous case. This is consistent with the fact that here the two configurations share the same decay rate, the discrepancy being carried by the amplitudes alone. As the film thickness decreases, the temperature variation within the film progressively diminishes and the finite-thickness solution converges toward the bulk-interface formulation.
		
		Despite the different thermal responses observed in the two cases, both Figures~\ref{fig:hom_profiles} and~\ref{fig:nonhom_profiles} exhibit the same asymptotic trend: decreasing values of the film thickness systematically reduce the discrepancy between the finite-thickness and the thin-film formulations, indicating that the convergence mechanism is robust with respect to the boundary condition imposed on the film boundary. The next paragraph quantifies this trend.
		\paragraph{\emph{\textbf{Convergence analysis}}}
		
		To quantify the convergence process, we introduce error measures reproducing two of the norms appearing on the left-hand side of the theoretical estimate \eqref{rate2}. Accordingly, the discrepancy in the bulk region is measured in the $L^\infty(0,T;L^2(\Omega_b))$ norm,
		\begin{align}
			E_v(\varepsilon)
			=
			\sup_{t\in(0,T)}
			\left(
			\int_0^H
			\left|
			v_\varepsilon(y,t)-v(y,t)
			\right|^2
			\,\d y
			\right)^{1/2},
			\label{eq:error_bulk}
		\end{align}
		whereas the discrepancy in the film region is measured in the rescaled $L^\infty(0,T;L^2(\Omega_\varepsilon))$ norm,
		\begin{align}
			E_u(\varepsilon)
			=
			\varepsilon^{-1/2}
			\sup_{t\in(0,T)}
			\left(
			\int_H^{H+\varepsilon}
			\left|
			u_\varepsilon(y,t)-u(t)
			\right|^2
			\,\d y
			\right)^{1/2}.
			\label{eq:error_interface}
		\end{align}
		In \eqref{eq:error_interface} the finite-thickness solution is compared with the constant extension of the interfacial temperature along the normal direction, which in the present one-dimensional setting reduces to $u(t)$ itself. The rescaling by $\varepsilon^{-1/2}$ compensates for the vanishing measure of the film region, exactly as in \eqref{rate2}. The supremum in time is taken over the horizon $T=1$ and replaces the evaluation at a single time instant, so that both quantities are directly comparable with the corresponding terms of \eqref{rate2}; the two remaining terms of \eqref{rate2}, involving the $L^2(0,T;H^1)$ norms, are not considered here.
		
		Since the solutions being compared are available in closed form, both error measures are evaluated exactly, up to the root-finding needed to determine $\lambda_1^\varepsilon$ from \eqref{eq:char_finite_hom_det} and the quadrature of the integrals in \eqref{eq:error_bulk}--\eqref{eq:error_interface}. No spatial or temporal discretization of the transmission problem is involved, so that the behavior reported below is not affected by approximation errors.
		
		\begin{figure}[htbp]
			\centering
			\includegraphics[width=0.72\textwidth]{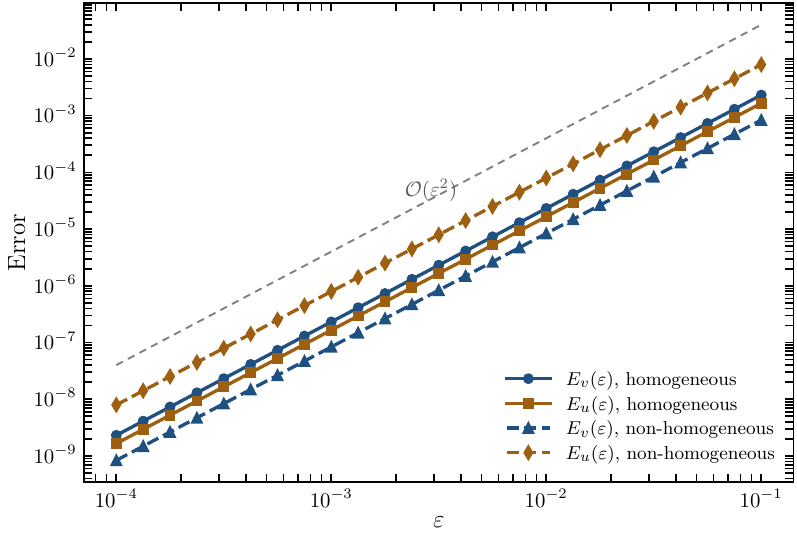}
			\caption{Convergence errors \eqref{eq:error_bulk} and \eqref{eq:error_interface} for the homogeneous and the non-homogeneous Neumann conditions, reported in logarithmic scale as functions of the film thickness. The dashed grey line indicates the reference slope $\mathcal O(\varepsilon^2)$.}
			\label{fig:convergence_errors}
		\end{figure}
		
		The convergence errors are reported in Figure~\ref{fig:convergence_errors}. For both boundary conditions the errors decrease monotonically as the film thickness tends to zero, and the four curves are parallel to the reference slope $\mathcal O(\varepsilon^2)$ over the whole range of thicknesses considered. In the asymptotic regime they are well described by
		\[
		E_v(\varepsilon)\simeq0.231\,\varepsilon^2,
		\qquad
		E_u(\varepsilon)\simeq0.166\,\varepsilon^2
		\]
		for the homogeneous condition, and by
		\[
		E_v(\varepsilon)\simeq0.084\,\varepsilon^2,
		\qquad
		E_u(\varepsilon)\simeq0.794\,\varepsilon^2
		\]
		for the non-homogeneous one, the four constants being independent of $\varepsilon$ to four significant digits over the three decades investigated.
		
		The two boundary conditions therefore differ only through the multiplicative constants: the prescribed heat flux reduces the discrepancy in the bulk region by a factor of about $2.8$, while increasing the discrepancy within the film by a factor of about $4.8$. This is consistent with the profiles of Figure~\ref{fig:nonhom_profiles}, where the boundary forcing acts directly on the film and leaves the bulk response closer to its limiting counterpart, and with the fact that in the non-homogeneous case the two configurations share the same decay rate, so that the bulk fields differ only through their amplitudes.
		
		To estimate the convergence order, the local convergence rates are computed according to
		\begin{align}
			p(\varepsilon_i)
			=
			\frac{
				\log\!\left(
				E(\varepsilon_i)
				/E(\varepsilon_{i+1})
				\right)}
			{
				\log\!\left(
				\varepsilon_i
				/\varepsilon_{i+1}
				\right)
			},
			\label{eq:numerical_rate}
		\end{align}
		and are reported in Figure~\ref{fig:convergence_rates}.
		
		\begin{figure}[htbp]
			\centering
			\includegraphics[width=0.72\textwidth]{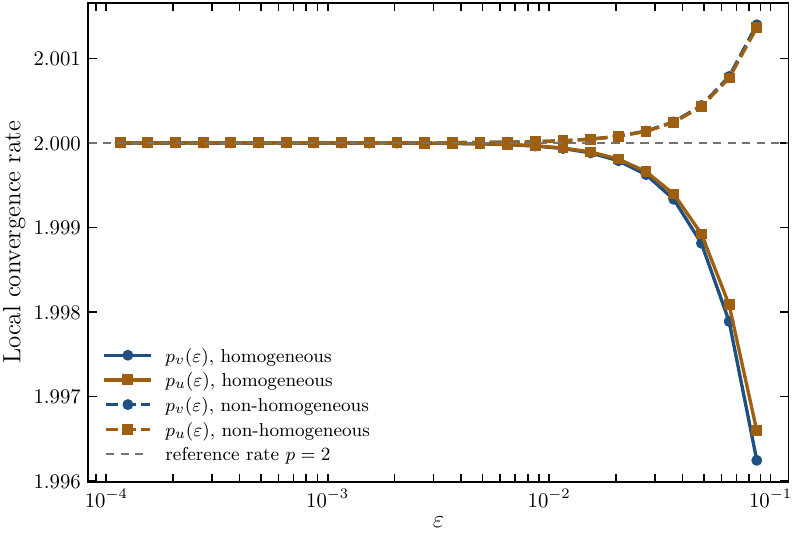}
			\caption{Local convergence rates \eqref{eq:numerical_rate} associated with the error measures \eqref{eq:error_bulk} and \eqref{eq:error_interface}, for the homogeneous and the non-homogeneous Neumann conditions. The dashed horizontal line corresponds to the reference quadratic rate $p=2$. Note the narrow range of the vertical axis: all rates lie within $4\times10^{-3}$ of the reference value.}
			\label{fig:convergence_rates}
		\end{figure}
		
		Figure~\ref{fig:convergence_rates} shows that the local convergence rates remain extremely close to the reference value \(p=2\) throughout the investigated range of film thicknesses, the deviations being confined to the largest values of \(\varepsilon\), where the higher-order terms of the expansions are still visible. The two boundary conditions approach the quadratic rate from opposite sides, the homogeneous case from below and the non-homogeneous one from above, which reflects the opposite sign of the corresponding fourth-order corrections; in both cases the rates converge to $p=2$ as the thickness decreases.
		
		This behavior is precisely the one predicted by the expansions obtained in Sections~\ref{homogeneus} and~\ref{nonhomogeneus}. In the homogeneous case the decay rate satisfies \eqref{eq:lambda_expansion}, and the associated wave numbers and amplitudes are perturbed at the same order, so that the whole fundamental mode differs from its limiting counterpart at order $\varepsilon^2$. In the non-homogeneous case the decay rate is common to the two configurations and the discrepancy is carried by the amplitudes, which satisfy \eqref{eq:amplitude_expansion}. In both cases, therefore,
		\[
		E_v(\varepsilon)=\mathcal O(\varepsilon^2),
		\qquad
		E_u(\varepsilon)=\mathcal O(\varepsilon^2),
		\]
		and the errors reported in Figure~\ref{fig:convergence_errors} confirm this order quantitatively.
		
		\paragraph{\emph{\textbf{Comparison with the theoretical convergence rate}}}
		
		The rates observed above are now compared with the theoretical estimate \eqref{rate2}. The quantities $E_v$ and $E_u$ reproduce, respectively, the third and the first term appearing on the left-hand side of \eqref{rate2}, so that the two convergence analyses refer to the same norms. The right-hand side of \eqref{rate2} consists instead of two contributions, namely a term of order $\varepsilon$ and the initial-datum term $\varepsilon^{-1/2}\norm{u_{0,\varepsilon}-\widetilde{u_0}}_{L^2(\Omega_\varepsilon)}$.
		
		The behavior of the latter must be assessed explicitly, since the amplitudes of the two configurations are prescribed independently and the corresponding initial data therefore do not coincide. Its order is, however, the same as that of the film profile itself: since $\alpha\varepsilon=O(\varepsilon)$, the film temperature differs from its interfacial value by $O(\varepsilon^2)$ uniformly across the thickness, as shown above, so that
		\[
		\varepsilon^{-1/2}
		\norm{u_{0,\varepsilon}-\widetilde{u_0}}_{L^2(\Omega_\varepsilon)}
		=
		\mathcal O(\varepsilon^2),
		\]
		with constants $0.166$ and $0.794$ in the homogeneous and in the non-homogeneous case, respectively. The right-hand side of \eqref{rate2} is therefore of order $\varepsilon$, and the initial datum is not the limiting contribution. The convergence observed in the present simplified setting is thus faster than the one guaranteed by \eqref{rate2}. The two results are consistent, since \eqref{rate2} provides an upper bound valid, in the geometric setting of Section~\ref{mat_sec1}, for general anisotropic coefficients and general data, and is not expected to be optimal for a configuration as particular as the present one, which, as observed in Section~\ref{simply1}, does not fall within that setting.
		
		The origin of the faster rate lies in the way the finite-thickness configuration is perturbed away from its limit, which has been made explicit in Sections~\ref{homogeneus} and~\ref{nonhomogeneus}. In the homogeneous case the characteristic equation \eqref{eq:char_finite_hom} is an $O(\varepsilon^2)$ perturbation of \eqref{eq:char_limit_hom}, so that the principal eigenvalue satisfies \eqref{eq:lambda_expansion}; in the normalized setting considered here this gives
		\[
		\lambda_1^\varepsilon-\lambda_1^0
		\simeq
		-0.1333\,\varepsilon^2,
		\]
		the ratio $(\lambda_1^\varepsilon-\lambda_1^0)/\varepsilon^2$ being constant to seven significant digits for $\varepsilon\le10^{-3}$. The wave numbers and the amplitudes are perturbed at the same order, and consequently so is the whole fundamental mode. In the non-homogeneous case the decay rate is common to the two configurations, and the amplitudes satisfy \eqref{eq:amplitude_expansion}. The quadratic behavior of both error measures follows in either case.
		
		This is further supported by the structure of the proof of Theorem~\ref{thmrates}. In the present flat one-dimensional configuration the principal curvatures of $\Sigma$ vanish, so that $J\equiv1$ and the prescribed flux does not vary across the film thickness; the interfacial temperature is constant along $\Sigma$, so that $\nabla_\Sigma u=0$; and the volumetric sources are neglected, so that $g=0$. Several of the contributions that generate the term of order $\varepsilon$ in \eqref{rate2} therefore vanish identically in this configuration.
		
		Two further remarks are in order concerning the comparison itself. First, the supremum in \eqref{eq:error_interface} is attained at $t=0$, so that $E_u$ coincides with the initial-datum term appearing on the right-hand side of \eqref{rate2}; the comparison between the observed and the guaranteed rate is accordingly most informative for $E_v$. Second, Theorem~\ref{thmrates} assumes the two configurations to share the same initial datum in the bulk region, whereas the single-mode solutions considered here have different bulk eigenfunctions, since $\beta_1^\varepsilon\neq\beta_1^0$. The resulting mismatch is itself of order $\varepsilon^2$ and, by the linearity of the problem together with an energy estimate for \eqref{eq:finite_eps_problem_nonhom} analogous to \eqref{est1}, replacing one bulk initial datum by the other would modify both error measures by $\mathcal O(\varepsilon^2)$ as well, leaving the observed convergence order unaffected.
		
		Overall, the analytical solutions, the temperature profiles and the convergence analysis provide a coherent picture of the thin-film limit formulation. As the film thickness tends to zero, the finite-thickness transmission problem converges toward the reduced bulk-interface problem while preserving the dominant thermal behavior of the coupled film-bulk system, at a rate which is consistent with, and in this particular configuration faster than, the one established in Theorem~\ref{thmrates}.
			\section{Conclusions}
		\label{sec:conclusions}
		
		\mich{The present work investigated a thermal transmission problem in a film-bulk domain, considering both a finite-thickness film configuration and the corresponding thin-film limit. The analysis addressed the asymptotic transition from a volumetric description of the film region to an effective interfacial formulation, obtained as the film thickness tends to zero.}
		
		\mich{A direct passage to the limit was shown to lead to a degenerate formulation, since the measure of the film region vanishes together with its volumetric contribution. To overcome this limitation, suitable scaling assumptions were introduced for the normal heat fluxes, namely for the flux balance across the interface and for the flux prescribed on the external boundary of the film region. Under these assumptions the finite-thickness problem converges to a non-trivial bulk-interface formulation, in which the thermal effect of the collapsing film is retained through an additional balance equation posed on the interface. Such a formulation admits two equivalent readings: as a pair of coupled parabolic problems of different dimensions, or as a single problem posed in the bulk region and supplemented with a dynamic boundary condition on the interface.}
		
		\mich{The mathematical analysis established the well-posedness of both the rescaled finite-thickness problem and the corresponding thin-film limit problem, and proved that the solutions of the former converge to the solution of the latter as the thickness vanishes. The convergence was moreover quantified through an explicit rate in suitable norms, relying only on the limited regularity available for transmission problems. This quantitative result is the main contribution of the present work, and provides, to the best of our knowledge, a first justification of the asymptotic reduction for film-bulk configurations of this type.}
		
		\mich{To complement the theoretical developments, a simplified film-bulk geometry was considered, allowing the derivation of explicit analytical solutions for both homogeneous and non-homogeneous Neumann boundary conditions. The resulting closed-form expressions clarified the structural difference between the two formulations: in the finite-thickness configuration the film temperature is a volumetric field defined throughout the film region, whereas in the thin-film limit the thermal contribution of the film is entirely represented through an interfacial temperature coupled with the bulk solution.}
		
		\mich{The same expressions also made the asymptotic reduction quantitative, and showed that the two configurations approach each other through two distinct mechanisms. In the homogeneous case the characteristic equation of the finite-thickness problem is a second-order perturbation of the one associated with the limit, so that the principal eigenvalue, the wave numbers and the amplitudes are all perturbed at that order. In the non-homogeneous case the decay rate is instead common to the two configurations, and the whole discrepancy is carried by the amplitudes, again at second order. Measuring the discrepancy in the same norms used in the theoretical estimate \eqref{rate2}, the convergence of both the bulk and the interfacial temperatures is therefore quadratic in the film thickness, as confirmed by the evaluation of the corresponding errors.}
		
		\mich{This behavior is consistent with the theoretical analysis. Estimate \eqref{rate2} provides an upper bound of order $\varepsilon$ which holds, in the geometric setting of Section~\ref{mat_sec1}, for general anisotropic coefficients and data, and is therefore not expected to be optimal in the particular configuration considered here, which moreover does not fall within that setting. In the flat, one-dimensional setting of Section~\ref{sec:simplified} the curvature of the interface vanishes, the interfacial temperature is constant along the interface and the volumetric sources are neglected, so that several of the contributions generating the first-order term vanish identically. The initial-datum contribution appearing on the right-hand side of \eqref{rate2} is of order $\varepsilon^2$ as well, and is therefore not the limiting one. The observed quadratic rate is thus compatible with the theoretical estimate, and strictly faster than the bound it provides.}
		
		\mich{Overall, the results provide a mathematically consistent framework for the description of thermal transmission phenomena in film-bulk systems, and clarify how the thermal contribution of a geometrically thin region can be retained in the asymptotic limit through an effective interface formulation.}
		
		\mich{Several directions remain open. The asymptotic analysis was carried out, for simplicity, for a closed interface without boundary and in the absence of external conditions on the bulk boundary; the case of non-empty Dirichlet and Neumann portions, and more generally of an interface with boundary as in the setting of Section~\ref{sec:problem}, is left to future investigation. The convergence rate was established at first order in the film thickness under the structural condition $\mathbf B\bP=\bP\mathbf B$ on the interface, and, without that condition, at the same order up to an additional term measuring the discrepancy between $\bP\mathbf B$ and $\mathbf B\bP$ (cf.\ Remark~\ref{rem:noncommuting}). Sharpening the latter estimate remains open, as does the case of distinct volumetric heat capacities in the two regions. Finally, the numerical approximation of the full multidimensional problem, and the comparison of the resulting discrete solutions with the reduced model, constitute a natural continuation of the present analysis.}

		\section*{Acknowledgments}
		\noindent
		The present work is partially supported by the Italian Ministry of University and Research (MUR) through the Coll\_e\_Innov\_M.A.R.S.\_DIN58\_Carraturo project (“Modeling and Simulation of Architected Cellular Materials”) — project number E19I25001010007  —  as part of the research program at the Department of Civil Engineering and Architecture, University of Pavia. Mihaela Chiappetta and Ferdinando Auricchio gratefully acknowledge this support. 
		Andrea Poiatti gratefully acknowledges support from the Alexander von Humboldt Foundation and
		is a member of Gruppo Nazionale per l’Analisi Matematica, la Probabilità e le loro Applicazioni
		(GNAMPA) of Istituto Nazionale per l’Alta Matematica (INdAM).
		
		\section*{Conflict of interest}
		
		The authors declare that they have no known competing financial interests or personal relationships that could have appeared to influence the work reported in present paper.
		
		\section*{Declaration of Generative AI and AI-assisted technologies in the writing process}
		\noindent
		During the preparation of this work, the authors used AI-assisted language tools exclusively for language editing and proofreading purposes. 
		The authors carefully reviewed and revised the output and take full responsibility for all scientific content, including the methods, results, and conclusions presented in this work.
		\bibliographystyle{siam}
		\bibliography{thinfilm}
	\end{document}